\documentclass[11pt,a4paper]{amsart}
\usepackage{graphicx} 
\usepackage{enumerate}
\usepackage{hyperref}
\usepackage{fullpage}
\usepackage{xcolor}
\usepackage{amssymb}
\usepackage{tikz}
\usetikzlibrary{positioning}
\usepackage{cleveref}
\usepackage{mathtools}
\usepackage{appendix}

\title{Linear theories of global fields with absolute values}

\author{Arno Fehm}
\author{Pierre Touchard}
\address{Institut f\"{u}r Algebra, Technische Universit\"{a}t Dresden, 01062 Dresden, Germany}
\email{arno.fehm@tu-dresden.de}
\email{pierre.touchard@tu-dresden.de}

\theoremstyle{plain}
\newtheorem{theorem}{Theorem}[section]
\newtheorem{corollary}[theorem]{Corollary}

\newtheorem{proposition}[theorem]{Proposition}
\newtheorem{lemma}[theorem]{Lemma}

\theoremstyle{remark}
\newtheorem{definition}[theorem]{Definition}
\newtheorem{remark}[theorem]{Remark}

\numberwithin{equation}{section}

\begin{document}

\begin{abstract}
We study the theory of a global field $k$ as a $k$-vector space with a predicate for one of the absolute values on $k$.
For example, we prove that in this language a global field with an ultrametric or real archimedean absolute value has a decidable theory,
while with a complex absolute value the theory is always undecidable.
We also study the existential theories and
axiomatize $k$ together with predicates for all non-complex absolute values on $k$ simultaneously.
\end{abstract}

\maketitle

\section{Introduction}
\label{sec:intro}

\noindent
The first-order theory of a global field (i.e.~a finite extension of $\mathbb{Q}$ or of $\mathbb{F}_p(t)$) in the language of rings is always undecidable \cite{Robinson,Rumely}, 
and the decidability of its existential theory is a famous open problem,
see \cite{Koenigsmann_survey}. 
We therefore investigate the theory of a global field in a language with only (unary) scalar multiplication instead of (binary) multiplication (the so-called {\em linear} theory, see e.g.~\cite{Dries,Weispfenning,Sturm,GHW,BF}) but with additional arithmetic structure. 
Namely, for a field $k$ let $\mathbb{P}_k$ denote the set of {\em places} of $k$, i.e.~equivalence classes of nontrivial absolute values on $k$. 
Following number theory convention, we denote elements of $\mathbb{P}_k$ by letters like $\mathfrak{p}$, and by $|.|_\mathfrak{p}$ a fixed absolute value in the equivalence class. 
A place $\mathfrak{p}$ of a global field is called {\em finite} if $|.|_\mathfrak{p}$ is ultrametric, and {\em infinite} otherwise; in the latter case, $|.|_\mathfrak{p}$ is archimedean, and the completion $\hat{k}_\mathfrak{p}$ 
of $k$ with respect to $|.|_\mathfrak{p}$ (with corresponding place $\hat{\mathfrak{p}}$)
is either isomorphic to $\mathbb{C}$ ($\mathfrak{p}$ is {\em complex}) or $\mathbb{R}$ ($\mathfrak{p}$ is {\em real}), cf.~\cite[Chapter 1]{EP}.
We define on $k$ the relations
\begin{eqnarray*}
    L_\mathfrak{p}(x,y) &:\Longleftrightarrow& |x|_\mathfrak{p}\leq|y|_\mathfrak{p},\\
    M_\mathfrak{p}(x,y,z) &:\Longleftrightarrow& |xy|_\mathfrak{p}=|z|_\mathfrak{p}.
\end{eqnarray*}
Our first result deals with the full or existential theory of one place of a global field and its completion,
in a language with unary function symbols $\cdot_\lambda$ for scalar multiplication with $\lambda\in k$:
\begin{theorem}\label{thm:intro1}
Let $\mathfrak{p}\in\mathbb{P}_k$ be a place of a global field $k$. 
\begin{enumerate}[(a)]
    \item If $\mathfrak{p}$ is finite, then
$$
 {\rm Th}(k,+,0,1,(\cdot_\lambda)_{\lambda\in k},L_\mathfrak{p},M_\mathfrak{p}) = {\rm Th}(\hat{k}_\mathfrak{p},+,0,1,(\cdot_\lambda)_{\lambda\in k},L_{\hat{\mathfrak{p}}},M_{\hat{\mathfrak{p}}}),
$$
and this theory is decidable.
\item If $\mathfrak{p}$ is real, then 
$$
 {\rm Th}(k,+,0,1,(\cdot_\lambda)_{\lambda\in k},L_\mathfrak{p}) = {\rm Th}(\hat{k}_\mathfrak{p},+,0,1,(\cdot_\lambda)_{\lambda\in k},L_{\hat{\mathfrak{p}}}),
$$
and this theory is is decidable.
However,
${\rm Th}(k,+,0,1,(\cdot_\lambda)_{\lambda\in k},L_\mathfrak{p},M_\mathfrak{p})$ is undecidable, in particular
$$
 {\rm Th}(k,+,0,1,(\cdot_\lambda)_{\lambda\in k},L_\mathfrak{p},M_\mathfrak{p}) \neq {\rm Th}(\hat{k}_\mathfrak{p},+,0,1,(\cdot_\lambda)_{\lambda\in k},L_{\hat{\mathfrak{p}}},M_{\hat{\mathfrak{p}}}),
$$
and ${\rm Th}_\exists(k,+,0,1,(\cdot_\lambda)_{\lambda\in k},L_\mathfrak{p},M_\mathfrak{p})$ is Turing-equivalent
to ${\rm Th}_\exists(k,+,\cdot)$.
\item If $\mathfrak{p}$ is complex, then
${\rm Th}(k,+,L_\mathfrak{p})$ is undecidable, in particular
$$
 {\rm Th}(k,+,L_\mathfrak{p}) \neq {\rm Th}(\hat{k}_\mathfrak{p},+,L_{\hat{\mathfrak{p}}}).
$$
Moreover, if in addition $k\cap\mathbb{R}i=\{0\}$ (where we view $k$ as a subfield of $\hat{k}_\mathfrak{p}=\mathbb{C}$), then
${\rm Th}_\exists(k,+,1,(\cdot_\lambda)_{\lambda\in k},L_\mathfrak{p})$ is decidable,
and otherwise it is Turing-equivalent to 
${\rm Th}_\exists(k\cap\mathbb{R},+,\cdot)$.
\end{enumerate}
\end{theorem}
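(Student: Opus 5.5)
We treat the three cases separately; in each, the positive results come from quantifier elimination plus density of $k$ in $\hat{k}_\mathfrak{p}$, and the negative results from interpreting multiplication.

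\emph{Part (a).} We prove quantifier elimination for the theory $T$ of $(\hat{k}_\mathfrak{p},+,0,1,(\cdot_\lambda)_{\lambda\in k},L,M)$, viewed as a $k$-vector space with a valuation-like predicate. The relevant structure is the value group $|\hat{k}_\mathfrak{p}^\times|$, a discrete rank-one group $\cong\mathbb{Z}$. The predicate $M$ lets us express additive relations among values, e.g.\ $|x|=|y|\cdot|z|$, so the value group becomes a copy of $(\mathbb{Z},+,<)$ with constants $|\lambda|$ for $\lambda\in k$, which is Presburger. We would axiomatize $T$ as ``valued $k$-vector spaces'': the ultrametric inequality, $|\lambda x|=|\lambda||x|$, value group a $\mathbb{Z}$-group, and an appropriate richness or density condition (every ball contains elements of every value, and the residue structure is infinite). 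Then we run a standard back-and-forth between $\aleph_1$-saturated models, extending partial isomorphisms of $k$-subspaces one element at a time. This is the main technical step. The delicate point is that a new element $x$ must be classified by the values $|x-a|$ for $a$ in the current subspace, which are realized or pseudo-limit types, as in Kaplansky-style arguments for valued vector spaces. Since $k$ is dense in $\hat{k}_\mathfrak{p}$ and both structures share the same value group and residue field, $k$ is a model of $T$. Quantifier-free sentences only mention constants from $k$, so $k\equiv\hat{k}_\mathfrak{p}$. Decidability then follows from the recursive axiomatization and completeness, given a computable presentation of $k$ and of $|\lambda|$ for $\lambda\in k$.

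\emph{Part (b).} For the reduct without $M$, we show $(k,+,(\cdot_\lambda),L_\mathfrak{p})$ has quantifier elimination relative to the theory of ordered $k$-vector spaces. The relation $L_\mathfrak{p}(x,y)$ is equivalent to $-|y|\le x\le |y|$ using the order induced by the real embedding. Conversely, the order is definable from $L$: we have $x\ge0$ iff $L(1,1+x)$ or $x=0$. The point is that $|1+x|\ge 1$ holds iff $x\ge 0$ or $x\le -2$, so we refine this using scaling by constants. We then apply the known quantifier elimination for divisible ordered vector spaces over an ordered field (van den Dries). Since $k$ is a dense $k$-subspace of $\mathbb{R}$, it is an elementary substructure of $\mathbb{R}$, and decidability follows.

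For undecidability with $M$, we define multiplication up to sign. We have $|z|=|x||y|$, and $z\ge0$ is definable, so for $x,y,z\ge 0$ the condition $M(x,y,z)$ means $z=xy$. Thus $(k_{\ge0},+,\cdot)$ is interpretable, hence so is $(k,+,\cdot)$, and the full theory is undecidable by Robinson/Rumely. The same translation is existential, and it reduces ${\rm Th}_\exists(k,+,\cdot)$ to the existential theory in $L_\mathfrak{p},M_\mathfrak{p}$. Conversely, $L_\mathfrak{p}$ and $M_\mathfrak{p}$ are existentially definable in the ring $k$. Here $x\ge0$ is existentially definable in $k$ by a Hilbert-17 style sum of squares, or by the Prestel definability of the order of a real place, which we would cite. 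This gives Turing equivalence. Note also that the analogous statement for $\hat{k}_\mathfrak{p}=\mathbb{R}$ is decidable, which shows the two theories differ.

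\emph{Part (c).} With $k\subseteq\mathbb{C}$ and $|x|\le|y|$, the aim is to interpret arithmetic. The idea is that ${\rm Th}(k,+,L)$ can define the additive subgroup $\mathbb{Z}$, or a lattice, and then $\mathbb{Z}$ with multiplication. Absolute values in $\mathbb{C}$ restricted to a lattice $\mathbb{Z}+\mathbb{Z}\omega\subseteq k$ encode the quadratic form $|a+b\omega|^2$. The expected main obstacle is to find an $L$-definable copy of $\mathbb{Z}$, using discreteness of the set $\{x: |x|\le|x+u|$ for all $u$ in a suitable set$\}$. Once $\mathbb{Z}$ is defined, comparison of norms of lattice points gives enough to define squaring, hence multiplication.

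For the existential part, when $k\cap\mathbb{R}i=\{0\}$, the existential $L$-formulas reduce to quadratic (in)equalities in the real coordinates. By density these are decided via Tarski over $\mathbb{R}$ restricted to $k$-linear constraints. In the other case, some $\lambda=ri\in k$ with $r\in\mathbb{R}$ exists. Then real multiplication on $k\cap\mathbb{R}$ can be expressed existentially via $|x+\lambda y|$, using $|x+iy|^2=x^2+y^2$, which yields the Turing equivalence with ${\rm Th}_\exists(k\cap\mathbb{R},+,\cdot)$.
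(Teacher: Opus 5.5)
\textbf{The genuine gap is in (c).} For the undecidability of ${\rm Th}(k,+,L_\mathfrak{p})$ you only say that one should define $\mathbb{Z}$ (or a lattice) in $(k,+,L)$ ``using discreteness'', and you leave this open as the main obstacle. Since $k$ is dense in $\mathbb{C}$, nothing in your sketch shows how a formula in $+$ and $L$ could isolate a discrete subgroup, so no interpretation of arithmetic is actually given.

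The missing idea is that $\mathbb{Z}$ is not needed. It suffices to interpret the field $k_0=k\cap\mathbb{R}$, which is a number field and so has undecidable theory by Robinson. The paper does this geometrically, in three steps:
\begin{enumerate}
\item The formula $\forall z\,(L(z,y)\rightarrow L(x+z,x+y))$ defines $y\in\mathbb{R}_{\ge0}x$. On the ray $|x+y|=|x|+|y|$ holds. Off it the triangle inequality is strict, and density of $\mathbb{Q}x$ in $\mathbb{R}x$ gives some $z\in\mathbb{Q}_{>0}x$ with $|z|<|y|$ but $|x+z|>|x+y|$. This yields $k_0$ and its order.
\item $ab=c$ on $k_0^{\ge0}$ is expressed by approximately similar triangles $(0,1,1+x)$ and $(0,b,b+y)$, with third sides parallel to an arbitrary non-real $z\in k$. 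This uses density of $k$ and completeness of $\mathbb{C}$.
\item The constant $1$ is removed via the automorphisms $x\mapsto ax$.
\end{enumerate}

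Your existential argument for $k\cap\mathbb{R}i=\{0\}$ also has a hole. Density transfers only \emph{open} conditions on a $k$-rational affine subspace from $\mathbb{C}$ to $k$, and conditions like $|x|=|y|$ or $x\neq y$ are not of this kind. For example, ``there is $y$ with $|y|=1$ and $y\neq\pm1$'' holds in $\mathbb{C}$ but fails in every such $k$. The observation you need is that $k\cap\mathbb{R}i=\{0\}$ forces $|z|=|z'|\Rightarrow z=\pm z'$ on $k$: if $z\neq z'$ then $(z+z')/(z-z')\in k\cap\mathbb{R}i$. This lets you rewrite every quantifier-free formula as a positive combination of linear equations and strict inequalities $|s|<|t|$. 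Only then is satisfiability in $k$ equivalent to satisfiability in $\mathbb{C}$. In the case $k\cap\mathbb{R}i\neq\{0\}$ you should also define $k_0$ existentially (e.g.\ $x\in k_0$ iff $|x-z|=|x+z|$ for some nonzero $z$ with $|z-1|=|z+1|$) and give the reverse reduction to ${\rm Th}_\exists(k_0,+,\cdot)$.

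\textbf{Less seriously, (a) and (b) contain false claims.} In (a) your strategy is the paper's, but two claims are wrong.
\begin{enumerate}
\item The theory of $(\hat{k}_\mathfrak{p},+,0,1,(\cdot_\lambda)_{\lambda\in k},L,M)$ does not eliminate quantifiers in this one-sorted language. Terms in one variable are $\lambda x+\mu$, so every quantifier-free $\theta(x)$ has eventually constant truth value as $v(x)\to-\infty$. But $\exists y\,M(y,y,x)$ says that $v(x)$ is even. Even with predicates for $n$-th powers of values, $\exists z\,(M(x,x,z)\wedge M(x,z,y))$ cannot be eliminated (Remark~\ref{rmk:finiteplaceL+noEQ}). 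So the back-and-forth must be done in a two-sorted language whose value sort is a $\mathbb{Z}$-group with divisibility predicates. Only completeness (and model completeness) is then pulled back to the one-sorted structure through the bi-interpretation.
\item The residue field of a finite place of a global field is finite, so an axiom saying the residue structure is infinite already fails in $\hat{k}_\mathfrak{p}$. The right axiom says: for $v(x)=v(y)\neq\infty$, some $\omega x+y$ with $\omega$ in a fixed finite set of representatives of $\kappa^\times$ has larger value. This is exactly what is used to realize the type of $\alpha$ when $K+k\alpha$ contributes no new values.
\end{enumerate}

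In (b), $L(1,1+x)\vee x=0$ does not define $x\ge0$ (take $x=-3$). The refinement you allude to is simply the atomic formula $L(x-1,x+1)$. Also, sums of squares in a number field are exactly the totally positive elements (and $0$). So a sums-of-squares argument does not define $\ge_\mathfrak{p}$ when $k$ has several real places; you need a genuine existential definition of a single real ordering.
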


Note that as stated above, the decidability of
${\rm Th}_\exists(k,+,\cdot)$
and ${\rm Th}_\exists(k\cap\mathbb{R},+,\cdot)$ is open.
We derive (a) from a quantifier elimination result for valued vector spaces that we prove in Section~\ref{sec:QE}, and which gives us an explicit axiomatization of the theory in question.
In Remark~\ref{rem:other_QE}
we discuss related work,
including a similar result by van den Dries \cite{Dries} that allows the same conclusion without $M_\mathfrak{p}$ respectively $M_{\hat{\mathfrak{p}}}$, and which also gives us most of (b).
For the rest of (b), and for (c), we mainly exploit different ways to define multiplication (in some cases existentially) from addition and the absolute value.

Then we deduce results for (possibly infinite) families of places of global fields:
\begin{theorem}\label{thm:intro2}
Let $k$ be a global field and let $S_1\subseteq S_0\subseteq\mathbb{P}_k$ be sets of places of $k$. Then
$$
 {\rm Th}(k,+,0,1,(\cdot_\lambda)_{\lambda\in k},(L_\mathfrak{p})_{\mathfrak{p}\in S_0},(M_\mathfrak{p})_{\mathfrak{p}\in S_1})
$$
is decidable if and only if
$S_0$ contains no complex places and
$S_1$ contains no infinite places.
\end{theorem}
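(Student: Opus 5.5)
The "only if" direction follows from Theorem~\ref{thm:intro1}, and the "if" direction needs a simultaneous treatment of all the places in $S_0$.

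\emph{Only if.} Suppose $S_0$ contains a complex place $\mathfrak{p}$. Every sentence of ${\rm Th}(k,+,L_\mathfrak{p})$ is a sentence of the larger language, so decidability of the big theory would make ${\rm Th}(k,+,L_\mathfrak{p})$ decidable. This contradicts Theorem~\ref{thm:intro1}(c). If $S_1$ contains an infinite place $\mathfrak{p}$, then $\mathfrak{p}\in S_0$ as well, and we may assume $\mathfrak{p}$ is real. The reduct to $L_\mathfrak{p},M_\mathfrak{p}$ is then undecidable by Theorem~\ref{thm:intro1}(b). In both cases the full theory is undecidable.

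\emph{If.} Now assume $S_0$ has only finite and real places, and $S_1$ only finite places.

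1. \emph{Reduction to finitely many places.} Any sentence mentions only finitely many symbols $L_\mathfrak{p}$, $M_\mathfrak{p}$. So it suffices to decide, uniformly in a finite $T\subseteq S_0$ given by explicit data, the theory ${\rm Th}(k,\dots,(L_\mathfrak{p})_{\mathfrak{p}\in T},(M_\mathfrak{p})_{\mathfrak{p}\in T\cap S_1})$. Adding $M_\mathfrak{p}$ for finite $\mathfrak{p}\in T\setminus S_1$ only enlarges the theory, so we may assume every finite place of $T$ carries $M_\mathfrak{p}$. Places must be effectively presentable, for instance by a prime element or generator and by the real embedding, so that the language is computable. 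Decidability is then meant uniformly in this presentation; this matches the "if" claim for arbitrary (possibly non-computable) $S_0$, since each sentence only involves finitely many named places.

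2. \emph{Approximation.} By weak approximation, $k$ is dense in $\prod_{\mathfrak{p}\in T}\hat k_\mathfrak{p}$ under the diagonal embedding. I would like to show that $k$ is an elementary substructure of $\prod_{\mathfrak{p}\in T}\hat k_\mathfrak{p}$ in the language with all the $L_\mathfrak{p}$ and $M_\mathfrak{p}$ interpreted coordinatewise. To do this I would prove a simultaneous quantifier elimination for $k$-vector spaces carrying finitely many independent valuations or orders (absolute values), generalizing the Section~\ref{sec:QE} result and van den Dries' result.

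3. \emph{Quantifier elimination.} The natural route is a back-and-forth or embedding test in which each one-variable extension step is solved separately in each place, as in the single-place case. The separate solutions are then glued using approximation: given finitely many open conditions (balls or intervals), one at each place, there is a common solution in $k$. This independence should reduce quantifier-free types over several places to products of types over single places.

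4. \emph{Decidability.} The resulting axioms are then recursive: the single-place axioms for each $\mathfrak{p}\in T$, plus weak-approximation axiom schemes. Combined with completeness, this gives decidability.

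The main obstacle is step 3, the gluing. Quantifier-free formulas may mix atoms for different places, and a single-place solution set may not be open, for example the condition $|x|_\mathfrak{p}=|a|_\mathfrak{p}$ for an archimedean $\mathfrak{p}$. So I would first normalize each single-place condition, up to finitely many exceptional points that are already definable, into an open condition, and only then apply weak approximation. Equalities at real places are the delicate case: they force $x$ to lie in a finite set, and that finite set must be handled directly.
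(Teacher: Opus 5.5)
Your ``only if'' direction is exactly the paper's. The skeleton of your ``if'' direction (single-place axioms plus a weak approximation scheme, then completeness, then a computable axiomatization) is also the paper's. However, two of your intermediate targets are false.

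First, for $|T|\geq 2$ the field $k$ is \emph{not} an elementary substructure of $\prod_{\mathfrak{p}\in T}\hat{k}_{\mathfrak{p}}$. The element $(0,1)$ is nonzero but satisfies $L_{\mathfrak{p}}((0,1),0)$. So the sentence $\forall x\,(L_{\mathfrak{p}}(x,0)\rightarrow x=0)$ holds in $k$ and fails in the product, which is not even a model of the single-place theories. There is no product of completions to compare with. In the paper, completeness comes instead from two facts: $k$ embeds into every model of $T_{k,S_0,S_1}$ (Proposition~\ref{prop:k_S}), and $T_{k,S_0,S_1}$ is model complete.

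Second, quantifier elimination fails in the one-sorted language once $M_{\mathfrak{p}}$ is present, already for a single finite place (Remark~\ref{rmk:finiteplaceL+noEQ}). In your language without the $Q_n$, even $\exists y\,M(y,y,x)$, saying ``$v_{\mathfrak{p}}(x)$ is even'', is not equivalent to a quantifier-free formula. Your step~1 deliberately adds $M_{\mathfrak{p}}$ at every finite place, so the one-variable embedding test of step~3 cannot succeed. It can only work when no $M_{\mathfrak{p}}$ is present, where quantifier elimination does hold (Theorem~\ref{thm:WA}(b)).

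The missing idea is to prove model completeness directly, treating a whole existential formula at once rather than one variable at a time. The paper's argument runs as follows.
\begin{enumerate}[(1)]
\item Take models $K_0\subseteq K_1$ and a quantifier-free $\psi$ over $K_0$ with a witness $a\in K_1^n$. Introduce new variables so that $\psi=\psi_0\wedge\psi_1\wedge\dots\wedge\psi_r$. Here $\psi_0$ is a linear system $Ax=b$ over $K_0$, and each $\psi_i$ uses only $\mathfrak{p}_i$-symbols applied to $0,\pm z_j$.
\item Substitute $0$ or $\pm z_{j_1}$ wherever $a_j=0$ or $a_{j_1}=\pm a_{j_2}$. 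This is the correct form of your ``exceptional points''.
\item Apply single-place model completeness (Theorems~\ref{thm:finite} and~\ref{thm:real}). For $M_{\mathfrak{p}}$ this rests on the two-sorted Theorem~\ref{cor:QE}, transported by the existential-preserving interpretation of Lemma~\ref{lem:interpretation}. It yields $x_i\in K_0^n$ satisfying $\psi_0\wedge\psi_i$, with nonzero coordinates that are pairwise not equal up to sign.
\item Choose a suitable $c_i\in K_0\setminus\{0\}$ such that every $\mathfrak{p}_i$-atom takes the same truth value at every $y$ with $L^{\mathfrak{p}_i}(y_j-x_{i,j},c_i)$ for all $j$. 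At finite places this holds because the atoms depend only on the $v(z_j)$. At real places it holds because all comparisons between distinct $\pm z_j$ are strict.
\item Apply the weak approximation axiom to glue the $x_i$ into a single $y\in K_0^n$ satisfying $\psi$. This axiom must be stated for solution sets of $Ay=b$, with parameters and radii taken in the model, not for single elements of $k$.
\end{enumerate}
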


In the decidable case, we get model completeness (and if $S_1=\emptyset$, again quantifier elimination) together with an explicit axiomatization of this theory (in a suitable expansion of the language). 
The key ingredients are the axiomatizations for the individual $\mathfrak{p}$, as well as an axiomatization of weak approximation, see Section \ref{sec:WA}, where Theorems \ref{thm:intro1} and \ref{thm:intro2} are proven.

\begin{remark}
For example, for the global field $\mathbb{Q}$ with its finite places identified with the prime numbers $\mathbb{P}$, and the infinite place $\infty$ corresponding to the usual absolute value $|.|$, we get that
$$
 {\rm Th}(\mathbb{Q},+,0,1,(\cdot_\lambda)_{\lambda\in\mathbb{Q}},(L_p)_{p\in\mathbb{P}\cup\{\infty\}},(M_p)_{p\in\mathbb{P}})
$$
is decidable. 
Note that 
$xy=z$ if and only if $xyz\geq0$ and $|xy|=|z|$, 
so since $x\geq 0\Leftrightarrow |x-1|\leq|x+1|$,
multiplication can be defined using $M_\infty$ (cf.~Proposition \ref{prop:real_undecidable}).
Moreover, due to the product formula, 
$M_\infty$ in turn can be defined 
by the $\mathcal{L}_{\omega_1,\omega}$-formula
$\bigwedge_{p\in\mathbb{P}}M_p(x,y,z)$. 
\end{remark}

\section{Places and vector spaces}

\noindent
Fix a field $k$.
As in the introduction, $\mathbb{P}_k$ denotes the set of all places of $k$, i.e.~equivalence classes of nontrivial absolute values.
We denote by $\mathbb{P}_k'\subseteq\mathbb{P}_k$ the subset of those places $\mathfrak{p}$ that are either infinite, or finite and {\em discrete}, meaning that $|k^\times|_\mathfrak{p}$ is a discrete subset of $\mathbb{R}_{>0}$.
Note that if $k$ is a global field, then $\mathbb{P}_k'=\mathbb{P}_k$.

Let $\mathcal{L}_k=\{+,0,1,(\cdot_\lambda)_{\lambda\in k}\}$ be the one-sorted language of vector spaces over $k$,
where $+$ is a binary function symbol, $0$ and $1$ are constant symbols, and $\cdot_\lambda$ is a unary function symbol, denoting scalar multiplication by $\lambda$.
Let $T_k$ be the $\mathcal{L}_k$-theory of $k$-vector spaces,
and $\underline{k}=(k,+,0,1,(\cdot_\lambda)_{\lambda\in k})$ the $k$-vector space $k$ as an $\mathcal{L}_k$-structure.
For a language $\mathcal{L}$ and an $\mathcal{L}$-structure $K$ we denote by $\mathcal{L}(K)$ the expansion of $\mathcal{L}$ by constant symbols for the elements of $K$.

\begin{proposition}\label{prop:Tk}
$\underline{k}$ is a model of $T_k$ 
and admits a unique homomorphism to every $K\models T_k$, which is an embedding if and only if $0^K\neq 1^K$.
\end{proposition}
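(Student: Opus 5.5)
Since $T_k$ is just the theory of $k$-vector spaces in $\mathcal{L}_k$, the constant $1$ is subject to no axiom and may be interpreted as any vector; this is the key point. The approach is direct: verify the axioms in $\underline{k}$, show that any homomorphism is forced to be $\lambda\mapsto \lambda\cdot 1^K$, and then check injectivity using that $k$ is a field.

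\emph{Step 1: $\underline{k}\models T_k$.} The axioms of $T_k$ are the abelian group axioms for $+,0$, together with the universal axioms $\lambda\cdot(x+y)=\lambda x+\lambda y$, $(\lambda+\mu)x=\lambda x+\mu x$, $(\lambda\mu)x=\lambda(\mu x)$ and $1_k\cdot x = x$, one instance for each $\lambda,\mu\in k$. Each holds in $\underline{k}$ by the field axioms of $k$, with $\cdot_\lambda$ interpreted as multiplication by $\lambda$.

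\emph{Step 2: existence and uniqueness.} Let $K\models T_k$ and define $f\colon k\to K$ by $f(\lambda)=\lambda\cdot^K 1^K$. Then $f$ preserves the constants: $f(1)=1\cdot 1^K=1^K$ by the unit axiom, and $f(0)=0\cdot 1^K=0^K$ by the standard consequence $0x=0$. It preserves $+$ by distributivity over scalar addition, and it preserves each $\cdot_\mu$ because $f(\mu\lambda)=(\mu\lambda)1^K=\mu(\lambda 1^K)=\mu f(\lambda)$. For uniqueness, any homomorphism $g$ satisfies $g(\lambda)=g(\cdot_\lambda(1))=\lambda\cdot g(1)=\lambda 1^K=f(\lambda)$.

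\emph{Step 3: the embedding criterion.} Since $\mathcal{L}_k$ has no relation symbols, $f$ is an embedding exactly when it is injective. If $0^K=1^K$, then $f(1)=f(0)$ with $1\neq 0$ in $k$, so $f$ is not injective. Conversely, suppose $1^K\neq 0^K$ and $f(\lambda)=f(\mu)$. Then $(\lambda-\mu)1^K=0$. If $\lambda\neq\mu$, multiplying by $(\lambda-\mu)^{-1}$ gives $1^K=0^K$, a contradiction; hence $f$ is injective.

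No step is a genuine obstacle. The only point needing care is Step 3, where the invertibility of the nonzero scalar $\lambda-\mu$ in the field $k$ is what makes a nonzero vector $1^K$ have trivial annihilator.
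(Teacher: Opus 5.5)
Your proof is correct and follows the same route as the paper. Any homomorphism is forced to send $\lambda=\lambda\cdot 1^k$ to $\lambda\cdot 1^K$; this linear map is an $\mathcal{L}_k$-homomorphism; and it is injective (hence an embedding, as $\mathcal{L}_k$ has no relation symbols) exactly when $1^K\neq 0^K$. You simply spell out details the paper leaves implicit, such as the axiom check for $\underline{k}$ and the use of the inverse of $\lambda-\mu$.
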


\begin{proof}
Every homomorphism $f\colon \underline{k}\rightarrow K$ must map $1^k$ to $1^K$, and therefore
$\lambda=\lambda\cdot 1^k$ to $\lambda\cdot 1^K$, for every $\lambda\in k$.
On the other hand, $\lambda\mapsto \lambda\cdot 1^K$ always defines a linear map, and therefore an $\mathcal{L}_k$-homomorphism. It is injective (and then automatically an embedding) if and only if $1^K$ is not the zero vector, i.e.~$0^K$.
\end{proof}

\section{Quantifier elimination for discretely valued vector spaces}
\label{sec:QE}

\noindent
We fix a field $k$ and a nontrivial
(normalized) discrete valuation $u\colon k\twoheadrightarrow\mathbb{Z}\cup\{\infty\}$ with residue field $\kappa$,
in the sense of e.g.~\cite[Chapter I]{Serre}.
We remind the reader that there is an overlap of this section with the results in \cite{Dries}, which we will discuss in detail in Remark \ref{rem:other_QE}.

\begin{definition}
Let $\mathcal{L}_{k,{\rm val}}=\mathcal{L}_k\cup\{<,\infty,s;v\}$ be a language 
with two sorts $K$ (the {\em vector space sort}) and $\Gamma$ (the {\em value sort}):
on the vector space sort 
we have the language $\mathcal{L}_k$;
on the value sort
a binary relation symbol $<$,
a constant symbol $\infty$,
and a unary function symbol $s$ ($s$ for successor);
as well as a symbol $v$ for a unary function from the vector space sort to the value sort.
We write $\mathcal{L}_{k,{\rm val}}$-structures as $(K,\Gamma,v)$, sometimes as $(K,v)$,
and $(k,\mathbb{Z}\cup\{\infty\},u)$ or  $(\underline{k},u)$
for $k$ as
an $\mathcal{L}_{k,{\rm val}}$-structure 
in the natural interpretation,
namely with value sort $\Gamma=\mathbb{Z}\cup\{\infty\}$, $v(\lambda)=u(\lambda)$, and $s(n)=n+1$, $s(\infty)=\infty$.
\end{definition}

We write $a,b,\alpha,\beta,x,y,z$ for elements and variables of the vector space sort,
and $\gamma,\delta,\epsilon$ for elements and variables of the value  sort.

\begin{definition}
Let $T_{k,u}$ be the $\mathcal{L}_{k,{\rm val}}$-theory 
consisting of $T_k$ and the following axioms (1)--(7) if $\kappa$ is finite, respectively (1)--(6) and (7') if $\kappa$ is infinite,
for a structure $(K,\Gamma,v)$:
\begin{enumerate}[ (1)]
\item\label{item:vectorspace} $0^K\neq 1^K$ 
\item\label{item:discrete} $(\Gamma,<)$ is a discretely ordered set with largest element $\infty$
\item\label{item:surj} $v$ is surjective
\item\label{item:triangle} $\forall x,y(v(x+y)\geq\min\{v(x),v(y)\}\wedge(v(x)=\infty\leftrightarrow x=0))$

\item\label{item:s} $s(\infty)=\infty\wedge\forall \gamma(\gamma\neq\infty\rightarrow (s(\gamma)>\gamma\wedge\neg\exists \delta(\gamma<\delta\wedge\delta<s(\gamma))))$, i.e.~$s$ is the successor function,
\item\label{item:compatible} $\forall x( v(\lambda x)=s^{u(\lambda)}(v(x)))$, for every $\lambda\in k$
\end{enumerate}
where for $n\in\mathbb{Z}\cup\{\infty\}$, $\gamma=s^n(\delta)$ is to be read as
$s^{-n}(\gamma)=\delta$ if $n<0$, and as $\gamma=\infty$ if $n=\infty$,

\begin{enumerate}[ (1)]
\setcounter{enumi}{6}
\item\label{item:residue_finite}    $\forall x,y(v(x)=v(y)\neq\infty\rightarrow\bigvee_{\omega\in\Omega}v(\omega x+y)>v(x))$,
\end{enumerate}
where $\Omega\subseteq\mathcal{O}_u^\times$ is a set of representatives of $\kappa^\times$, if $\kappa$ is finite, and
\begin{enumerate}[ (1')]
\setcounter{enumi}{6}
\item\label{item:residue_infinite}  $\forall x,y_1,\dots,y_n((\bigwedge_{j=1}^nv(x)\leq v(y_j))\rightarrow\bigvee_{i=0}^n\bigwedge_{j=1}^n v(\omega_ix+y_j)=v(x))$ for every $n\in\mathbb{N}$,
\end{enumerate}    
where $\omega_0,\omega_1,\dots$ are elements of $\mathcal{O}_u^\times$ with pairwise distinct residues,
if $\kappa$ is infinite.
\end{definition}

\begin{proposition}\label{prop:Tku_model}
$(\underline{k},u)\models T_{k,u}$
\end{proposition}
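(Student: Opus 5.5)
We verify the axioms of $T_{k,u}$ one at a time in the natural interpretation $(k,\mathbb{Z}\cup\{\infty\},u)$. That $\underline{k}\models T_k$ is part of Proposition~\ref{prop:Tk}. Axiom (\ref{item:vectorspace}) holds because $0\neq 1$ in the field $k$. Axioms (\ref{item:discrete}) and (\ref{item:s}) hold because $\mathbb{Z}\cup\{\infty\}$, with its usual order and $\infty$ on top, is discretely ordered with largest element $\infty$, and $n\mapsto n+1$ (with $\infty\mapsto\infty$) is the successor function. Axiom (\ref{item:surj}) holds because $u$ is normalized and surjective onto $\mathbb{Z}\cup\{\infty\}$. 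Axiom (\ref{item:triangle}) is exactly the ultrametric inequality together with $u(x)=\infty\iff x=0$. Axiom (\ref{item:compatible}) follows from multiplicativity, $u(\lambda x)=u(\lambda)+u(x)=s^{u(\lambda)}(u(x))$. Here we read the convention for negative $n$ as $s^{-n}$ of the left side; the case $\lambda=0$ gives $\infty$ on both sides.

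For (\ref{item:residue_finite}), let $\kappa$ be finite and let $u(x)=u(y)\neq\infty$. Then $y/x\in\mathcal{O}_u^\times$, so its residue is nonzero. Choose $\omega\in\Omega$ with $\bar\omega=-\overline{y/x}$. Then $u(\omega+y/x)>0$, hence $u(\omega x+y)>u(x)$.

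For (\ref{item:residue_infinite}), let $\kappa$ be infinite and suppose $u(x)\le u(y_j)$ for all $j$. If $x=0$, then every $y_j=0$ and the conclusion holds trivially. Otherwise each $c_j:=y_j/x$ lies in $\mathcal{O}_u$. Now $u(\omega_i x+y_j)=u(x)$ holds iff $u(\omega_i+c_j)=0$, iff $\bar\omega_i\neq-\bar c_j$. Each index $j$ excludes at most one value of $i$, because the residues $\bar\omega_0,\dots,\bar\omega_n$ are pairwise distinct. So $n$ indices exclude at most $n$ of the $n+1$ values $i=0,\dots,n$, and some $i$ works for all $j$.

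There is no real obstacle here. The only points needing care are the degenerate cases ($x=0$, $\lambda=0$) and reading the $s^n$ notation correctly, and both are routine. The pigeonhole step in (\ref{item:residue_infinite}) is the only place with any content.
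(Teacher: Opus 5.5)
Your proof is correct and follows the paper's argument essentially verbatim: axioms (1)--(6) are routine, (7) comes from choosing the representative $\omega\in\Omega$ of $-\overline{y/x}$, and (7') from the observation that each $y_j$ rules out at most one of the $n+1$ pairwise distinct residues $\bar\omega_0,\dots,\bar\omega_n$. You also spell out the degenerate cases that the paper leaves implicit: $x=0$ in (7'), and $\lambda=0$ and $u(\lambda)<0$ in (6).
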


\begin{proof}
It is obvious that $(k,\mathbb{Z}\cup\{\infty\},u)$ satisfies $T_k$ and \eqref{item:vectorspace}--\eqref{item:compatible}. If $\Omega\subseteq\mathcal{O}_u^\times$ is a set of representatives of $\kappa^\times$, and $u(x)=u(y)$, then $u(\omega x+ y)=u(x)+u(\omega +y/x)>u(x)$ for 
$\omega\in\Omega$ the representative of $-y/x$, so if $\kappa$ is finite, then \eqref{item:residue_finite} holds;
and if $\omega_0,\omega_1,\ldots\in\Omega$ are pairwise distinct, and $v(x)\leq v(y_j)$ for $j=1,\dots,n$, then any $\omega_i$ with residue different from $-y_j/x$ satisfies $u(\omega_ix+y_j)=u(x)$, so if $\kappa$ is infinite, then (\ref{item:residue_infinite}') holds (see also \cite[Lemma 3]{Dries}). 
\end{proof}

Roughly speaking, models of $T_{k,u}$ are nontrivial vector spaces over $k$ valued in a discretely ordered set,
with a valuation compatible with $u$,
and such that
the residue groups 
$$
 \{x\in K:v(x)\geq\gamma\}/\{x\in K:v(x)>\gamma\}
$$ 
are either all finite of cardinality $|\kappa|$, or are all infinite.

Note that a substructure of a model of $T_{k,u}$ automatically again satisfies
all these axioms except possibly \eqref{item:surj}.
Also note that $1$ so far plays no role other than naming some nonzero element.

\begin{proposition}\label{prop:Tku_minimal}
$(\underline{k},u)$ embeds uniquely into every $(K,\Gamma,v)\models T_{k,u}$.    
\end{proposition}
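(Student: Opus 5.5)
On the vector space sort we take the map $f\colon k\to K$, $\lambda\mapsto\lambda\cdot 1^K$, which Proposition~\ref{prop:Tk} provides. It is the unique $\mathcal{L}_k$-homomorphism, and it is injective by axiom~\eqref{item:vectorspace}. On the value sort we put
$$
 g(n)=s^n(v(1^K))\quad(n\in\mathbb{Z}),\qquad g(\infty)=\infty .
$$
We must show that $(f,g)$ is an $\mathcal{L}_{k,{\rm val}}$-embedding and that any embedding $(f',g')$ coincides with it.

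\emph{Uniqueness.} Proposition~\ref{prop:Tk} forces $f'=f$. Since $g'$ commutes with $v$, we get $g'(0)=g'(u(1))=v(f(1))=v(1^K)$. Since $g'$ preserves $s$ and $\infty$, induction gives $g'(n)=s^n(v(1^K))$ for $n\ge0$. For $n<0$, the element $g'(n)$ is the unique $\delta$ with $s^{-n}(\delta)=g'(0)$. This uses that $s$ is injective on $\Gamma\setminus\{\infty\}$, which follows from~\eqref{item:s} and~\eqref{item:discrete}: $s(\gamma)$ is the immediate successor of $\gamma$. Alternatively, pick $\lambda\in k$ with $u(\lambda)=n$ and use $g'(n)=v(\lambda 1^K)$. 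Either way $g'=g$.

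\emph{Well-definedness and compatibility with $v$.} Axiom~\eqref{item:compatible} with $x=1^K$ gives
$$
 v(\lambda\cdot 1^K)=s^{u(\lambda)}(v(1^K))=g(u(\lambda)).
$$
Here "$s^n(\delta)$" for negative $n$ is read via the convention in that axiom. That reading makes sense because surjectivity of $u$ lets us choose $\lambda$ with $u(\lambda)=-m$, and then $v(\lambda 1^K)$ witnesses the existence of an $s^m$-preimage of $v(1^K)$. So $g$ is well defined on all of $\mathbb{Z}$ and $g\circ u=v\circ f$. It preserves $\infty$, since $v(0)=\infty$ by~\eqref{item:triangle}. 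It preserves $s$ by construction, using $s(\infty)=\infty$.

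\emph{Embedding of the ordered value sort.} We need $g$ to be strictly increasing with image avoiding $\infty$ on $\mathbb{Z}$. First, $v(1^K)\neq\infty$ by~\eqref{item:vectorspace} and~\eqref{item:triangle}, and likewise $v(\lambda 1^K)\ne\infty$ for $\lambda\neq0$. Second, by~\eqref{item:s}, $s(\gamma)>\gamma$ for $\gamma\ne\infty$, so $n<m$ implies $g(n)<g(m)$ by transitivity. Hence $g$ is injective and order-preserving, and it reflects $<$ because $<$ is a linear order. Since the symbols of $\mathcal{L}_{k,{\rm val}}$ are only functions, constants and $<$, this makes $(f,g)$ an embedding.

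The only mildly delicate step is the negative exponents: checking that $s^{-1}$ is well defined along this chain and agrees with $v(\lambda 1^K)$. We handle it as above via~\eqref{item:compatible}, taking $\lambda$ of negative value, together with the successor property~\eqref{item:s}.
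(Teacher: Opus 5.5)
Your proof is correct and follows the paper's route. You use the same maps $f(\lambda)=\lambda\cdot 1^K$ and $g(n)=s^n(v(1^K))$, get uniqueness from Proposition~\ref{prop:Tk} together with surjectivity of $u$, get compatibility with $v$ from axiom~\eqref{item:compatible}, and get order-preservation from \eqref{item:discrete} and \eqref{item:s}. You also make explicit points the paper leaves implicit: that negative powers of $s$ along this chain are well defined, and that $g$ avoids $\infty$ on $\mathbb{Z}$.
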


\begin{proof}
Every embedding of $(\underline{k},u)$ into $(K,\Gamma,v)$ must send $1^k$ to $1^K$, and therefore $\lambda\in k$ to $\lambda\cdot 1^K$ (cf.~Proposition \ref{prop:Tk}), and this also determines the embedding on the value sort by \eqref{item:surj}.
Let $f\colon k\rightarrow K$, $\lambda\mapsto \lambda\cdot 1^K$ and
$g\colon\mathbb{Z}\cup\{\infty\}\rightarrow\Gamma$, $n\mapsto s^n(v(1^K))$.
Then $f$ is injective by \eqref{item:vectorspace},
$f$ and $g$ are compatible with $v$ by \eqref{item:compatible} (i.e.~$v(f(\lambda))=g(u(\lambda))$ for every $\lambda\in k$),
$f$ is compatible with $+$, $0$, $1$ and $\cdot_\lambda$ by the vector space axioms $T_k$,
and $g$ is by definition compatible with $s$,
and therefore also with $<$ by \eqref{item:discrete} and \eqref{item:s}.
\end{proof}

\begin{theorem}\label{thm:QE}
$T_{k,u}$ has quantifier elimination.    
\end{theorem}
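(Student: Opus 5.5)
We prove quantifier elimination with the standard embedding test for a two-sorted theory. Let $(K,\Gamma,v)$ and $(K',\Gamma',v')$ be models of $T_{k,u}$ with $(K',\Gamma',v')$ $|K|^+$-saturated. Let $(A,\Delta,v)$ be a substructure of $(K,\Gamma,v)$ and $f\colon(A,\Delta)\to(K',\Gamma')$ an embedding. It suffices to extend $f$ to an embedding of $(K,\Gamma,v)$. By Proposition~\ref{prop:Tku_minimal}, $A$ contains $k\cdot 1$ and $f$ is the identity there, so $A$ is a $k$-subspace. Substructures are closed under $v$ and $s^{\pm1}$ (we read $s^{-1}$ as definable and check it is preserved; otherwise we first close $\Delta$ under predecessors, which is harmless since $f$ must respect $<$ and $s$). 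We extend $f$ one element at a time and use Zorn's lemma for limits, as usual.

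\textbf{Step 1: enlarging the value sort alone.} Take $\gamma\in\Gamma\setminus\Delta$ and consider the $\mathbb{Z}$-chain $\{s^n(\gamma)\}$. Its cut in $\Delta$ is a union of whole $\mathbb{Z}$-chains, since $\Delta$ is closed under $s^{\pm1}$. Both orders are discrete without endpoints other than $\infty$, and $\Gamma'$ is saturated, so we can find $\gamma'\in\Gamma'$ realizing the image cut whose chain also misses $f(\Delta)$. Mapping $s^n(\gamma)\mapsto s^n(\gamma')$ extends $f$. Hence we may assume $\Delta=\Gamma$ is fully embedded.

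\textbf{Step 2: adding a vector $a\notin A$.} This is the main step. The $k$-span of $A+ka$ is determined by the values $v(a-b)$ for $b\in A$, together with the values $v(\lambda a+b)=v(a+\lambda^{-1}b)+u(\lambda)$. We split into two cases. In the \emph{immediate-type case}, the set $\{v(a-b):b\in A\}$ has no maximum. Then we need $a'\in K'$ with $v'(a'-f(b))=f(v(a-b))$ for all $b\in A$; this is a type over a set of size at most $|K|$. Finite satisfiability follows from a ball argument: given finitely many $b_i$, take $b$ with $v(a-b)$ larger than all the $v(a-b_i)$; then $a'=f(b)$ works. In the \emph{maximum case}, some $b_0$ maximizes $v(a-b)$. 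Replacing $a$ by $a-b_0$, we get $v(a)=\gamma$ with $v(a-b)\le\gamma$ for all $b$ in the span, with equality forced when $v(b)\geq\gamma$. So $a$ gives a residue class at level $\gamma$ not represented in $A$. We need $a'$ with $v'(a')=f(\gamma)$ and $v'(a'-f(b))=f(\gamma)$ for all $b\in A$ with $v(b)\ge\gamma$. Finitely many such conditions are satisfiable by axiom (7') when $\kappa$ is infinite. When $\kappa$ is finite, axiom (7) shows each residue group at level $\gamma$ has exactly $|\kappa|$ elements, and the $k$-structure lets us transport between levels. The main obstacle is then to show that $A$ cannot already contain representatives of all residues at level $\gamma$ while $a$ is new; otherwise $a$ would fall into the first case. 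So in the finite case the maximum case must be handled by showing that $A$ contains an element of value $\gamma$. If it does, scaling by $\Omega$ covers every residue and the maximum case cannot occur. If it does not, we choose $a'$ of value $f(\gamma)$ by surjectivity (3) of $v'$.

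\textbf{Step 3: checking and concluding.} We verify that $\lambda a+b\mapsto\lambda a'+f(b)$ preserves $v$. This follows from the ultrametric computation: in each case $v(\lambda a+b)$ is determined by $u(\lambda)$ and by the data matched in Step 2, using axiom (6). A chain union then gives the full extension. Since $(k,u)$ embeds into every model, QE also yields completeness of $T_{k,u}$, though that is not needed here.
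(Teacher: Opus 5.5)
Your proof is correct, but it is organized differently from the paper's.

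The paper uses the one-step test: two models over a common substructure, and one new vector $\alpha$. Instead of enlarging the value sort, it first \emph{shrinks} it. Using (3) in both models and the ultrametric formula for $v(b+\lambda\alpha)$ when $v(\alpha)\notin v_0(K)$, it absorbs every element of $\Gamma_0\setminus v_0(K)$ into a new vector, so that $\Gamma_0=v_0(K)$. It then splits only on whether some $b+\lambda\alpha$ has value outside $\Gamma_0$. If so (Cases 1--2), $\alpha'$ realizes the cut of $v(\alpha)$ in $v_0(K)$. If not (Case 3), a single type $p_2$ prescribes $v(x-b)$ for all $b\in K$. Its finite satisfiability is witnessed by infinitely many elements of $K$ itself, via (7) resp.\ (7'). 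This one computation covers both your immediate and your residue subcases, and $p_0$ guarantees $\alpha'\notin K$.

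You instead embed all of $\Gamma$ first by realizing cuts in the discrete order. After that, the new-value case needs no saturation: surjectivity of $v'$ suffices, because $f$ is injective on $\Gamma$ and $v'(f(A))=f(v(A))$ is closed under $s^{\pm1}$. What remains is the familiar immediate/residue dichotomy from valued fields. Axioms (7) resp.\ (7') enter only in the residue case, and for finite $\kappa$ only to show that this case cannot occur when $\gamma\in v(A)$. This makes the role of the residue axioms more transparent. The price is the Zorn/full-embedding form of the test and a separate cut-realization step in the value sort, both of which the paper avoids.

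A few details should be made explicit:
\begin{enumerate}
\item A substructure need not be closed under $s^{-1}$ just because $s^{-1}$ is definable. So your fallback of first closing $\Delta$ under predecessors is genuinely needed for Step~1. It is harmless, since $s$ is an order-preserving bijection of $\Gamma\setminus\{\infty\}$ by (3), (5) and (6).
\item In Step~2 you should say why $a'\notin f(A)$: every prescribed value $v'(a'-f(b))$ differs from $\infty$.
\item When applying (7') you need some element of $K'$ of value $f(\gamma)$. It exists by (3) even if $\gamma\notin v(A)$.
\item Your sentence about ``the main obstacle'' in the finite-$\kappa$ case is garbled. The dichotomy you end with, namely whether $\gamma\in v(A)$, is exactly the right one.
\end{enumerate}
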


\begin{proof}
Let $\underline{L}=(L,\Gamma,v),\underline{L}'=(L',\Gamma',v')\models T_{k,u}$ with a common $\mathcal{L}_{k,{\rm val}}$-substructure $\underline{K}=(K,\Gamma_0,v_0)$
and let $\alpha\in L\setminus K$.
It suffices to show that if $\underline{L}'$ is sufficiently saturated, then the identity on $\underline{K}$ extends to an embedding of the
$\mathcal{L}_{k,{\rm val}}$-substructure $\underline{K}[\alpha]$ of $\underline{L}$
generated by $\underline{K}$ and $\alpha$ 
into $\underline{L}'$.
Note that we can indeed restrict to $\alpha$ in the vector space sort
as $v$ and $v'$ are surjective by \eqref{item:surj}.
Moreover, $\underline{K}[\alpha]$ has
$K+k\alpha=K\oplus k\alpha$ in the vector space sort and
$\Gamma_0\cup v(K+k\alpha)$ in the value sort.

Before we start the argument note that if $v(\alpha)\notin v_0(K)$,
then $v_0(K)\cap s^{\mathbb{Z}}(v(\alpha))=\emptyset$ by \eqref{item:compatible},
hence for $b\in K$ and $\lambda\in k$ the value
\begin{equation}\label{eqn:v}
v(b+\lambda\alpha) = \min\{v(b),s^{u(\lambda)}(v(\alpha))\}\in v_0(K)\sqcup s^\mathbb{Z}(v(\alpha)) 
\end{equation}
is uniquely determined by the ultrametric triangle inequality \eqref{item:triangle}. 
Therefore, we can assume without loss of generality that $v_0(K)=\Gamma_0$, i.e.~that $(K,\Gamma_0,v_0)$ satisfies also \eqref{item:surj}
and is therefore a model of $T_{k,u}$.
Indeed, for every $\gamma\in\Gamma_0\setminus v_0(K)$ there exist
$\alpha\in L$ and $\alpha'\in L'$ with $v(\alpha)=\gamma=v'(\alpha')$ by \eqref{item:surj},
and $\underline{K}[\alpha]\rightarrow\underline{K}[\alpha']$, $b+\lambda\alpha\mapsto b+\lambda\alpha'$ is an isomorphism by \eqref{eqn:v}.

We distinguish three cases:

\vspace{0.1cm}

\noindent
{\bf Case 1:} $v(\alpha)\notin\Gamma_0$.

Define the sets of quantifier-free $\mathcal{L}_{k,{\rm val}}(K)$-formulas
\begin{eqnarray*}
 p_0(x)&:=&\{x\neq a\;:\;a\in K\},\\
 p_1(x)&:=&\{v(x)<v(c)\;:\;c\in K,v(\alpha)<v(c)\}\cup
 \{v(x)>v(b)\;:\;b\in K,v(\alpha)>v(b)\}.
\end{eqnarray*}
We claim that $p_0\cup p_1$ is consistent with the elementary diagram of $\underline{L}'$.
Indeed, first note that 
it suffices to show that given $b_1,\dots,b_n,c_1,\dots,c_m\in K$, there exist infinitely many elements of $K$ that satisfy the corresponding finitely many formulas,
since then there also exists one which is in addition different from finitely many given $a_1,\dots,a_l\in K$.
We assume that $n>0$ and $m>0$, but the other cases are very similar.
As $\Gamma_0$ is totally ordered, we can assume without loss of generality that $n=m=1$.
Since $v(b_1)<v(\alpha)<v(c_1)$,
and $v(K)\cap s^{\mathbb{Z}}(v(\alpha))=\emptyset$,
we have $v(b_1)<v(\lambda b_1)<v(c_1)$
for every $\lambda\in k$ with $u(\lambda)>0$, 
of which there are infinitely many.

So, assuming $\underline{L}'$ is $|K|^+$-saturated, there exists a realization
$\alpha' \in L'$  of $p_0\cup p_1$ in $\underline{L}'$. 
As $\alpha'$ satisfies $p_0$, we have $\alpha'\notin K$ and therefore $K+k\alpha'=K\oplus k\alpha'$, and so
$f\colon K+k\alpha\rightarrow K+k\alpha'$, $b+\lambda\alpha\mapsto b+\lambda\alpha'$
is a $k$-vector space isomorphism.
Moreover,
$v(K+k\alpha)=v_0(K)\sqcup s^{\mathbb{Z}}(v(\alpha))$
and $v'(K+k\alpha')=v_0(K)\sqcup s^{\mathbb{Z}}(v'(\alpha'))$ by \eqref{eqn:v},
and since $\alpha$ and $\alpha'$ satisfy $p_1$,
and therefore $v(\alpha)$ and $v'(\alpha')$ realize the same cut in $v_0(K)$,
the map
$v_0(K)\sqcup s^{\mathbb{Z}}(v(\alpha))\rightarrow v_0(K)\sqcup s^{\mathbb{Z}}(v'(\alpha'))$ given by $v(b+\lambda\alpha)\mapsto v'(b+\lambda\alpha')$ is a well-defined isomorphism of ordered sets by \eqref{eqn:v},
and by definition compatible with $f$.

\vspace{0.1cm}

\noindent
{\bf Case 2:} $v(\alpha)\in \Gamma_0$ but $v(\tilde{\alpha})\notin \Gamma_0$ for some 
$\tilde{\alpha}=b+\lambda\alpha$, 
$b\in K$, $0\neq\lambda\in k$.

In this case, 
since $K+k\alpha=K+k\tilde{\alpha}$,
we can replace $\alpha$ by $\tilde{\alpha}$ and are back in Case~1.

\vspace{0.1cm}

\noindent
{\bf Case 3:} $v(K+k\alpha)=\Gamma_0$.

Let
\begin{eqnarray*}
    p_2&:=&\{v(x-b)=v(c)\;:\;b,c\in K,c\neq 0,v(\alpha-b)=v(c)\}.
\end{eqnarray*}
We claim that $p_0\cup p_2$ is consistent with the elementary diagram of $\underline{L}'$.
Again it suffices to find,
given $(b_1,c_1),\dots,(b_n,c_n)$, infinitely many $\beta\in K$ with $v(\beta-b_i)=v(c_i)$ for each $i=1,\dots,n$.
Without loss of generality, there is $m\leq n$ such that $c_1,\dots,c_m$ are those $c_i$ of maximal valuation among the $c_i$.
If $\kappa$ is finite, then by
\eqref{item:residue_finite} there exists $\lambda\in \mathcal{O}_u^\times$ such that
$v(\lambda c_1-(\alpha-b_1))>v(c_1)$,
so if we let $\beta=(\mu+\lambda) c_1+b_1$
for any $\mu\in k$ with $u(\mu)>0$, 
then for every $i$,
$$
 v(\beta-b_i)=v((\lambda c_1-(\alpha-b_1))+\mu c_1 +(\alpha-b_i)) = v(\alpha-b_i)=v(c_i)
$$ 
by \eqref{item:triangle} and \eqref{item:compatible}.
If $\kappa$ is infinite, then by (\ref{item:residue_infinite}') there exists $\lambda\in\mathcal{O}_u^\times$ with
$v(\lambda c_1+((\alpha-b_i)-(\alpha-b_1)))=v(c_1)$ for $i=1,\dots,m$, so if we again let
$\beta=(\mu+\lambda)c_1+b_1$ for any $\mu\in k$ with $u(\mu)>0$,
then, again by \eqref{item:triangle} and \eqref{item:compatible}, for $i=1,\dots,m$ we have
$$
 v(\beta-b_i)=v((\lambda c_1+((\alpha-b_i)-(\alpha-b_1)))+\mu c_1)=v(\lambda c_1)=v(c_i),
$$
and for $i=m+1,\dots,n$ we get
$$
 v(\beta-b_i)=v((\alpha-b_i)-(\alpha-b_1)+\lambda c_1+\mu c_1)=v(\alpha-b_i)=v(c_i).
$$
So, assuming $\underline{L}'$ is $|K|^+$-saturated, there exists a realization
$\alpha' \in L'$  of $p_0\cup p_2$ in $\underline{L}'$. 
Again we have $\alpha'\notin K$ since it satisfies $p_0$, and so we get an isomorphism $f$ of $k$-vector spaces as in Case~1. 
Since $\alpha'$ satisfies $p_2$, we have
$$
 v(b+\lambda\alpha)=s^{u(\lambda)}(v(\alpha-(-\lambda^{-1}b)))=s^{u(\lambda)}(v'(\alpha'-(-\lambda^{-1}b)))=v'(b+\lambda\alpha')
$$
for every $b\in K$ and $\lambda\in k^\times$ by \eqref{item:compatible},
so in particular $v'(K+k\alpha')=\Gamma_0$,
and $f$ is compatible with the identity on the value sort, hence it gives an embedding of $\underline{K}[\alpha]$ into $\underline{L}'$.
\end{proof}

\begin{definition}
Let $\mathcal{L}_{k,{\rm val}}^+=\mathcal{L}_{k,{\rm val}}\cup\{0,1,+,(P_n)_{n\in\mathbb{N}}\}$ be the expansion of $\mathcal{L}_{k,{\rm val}}$, where 
to the value sort we add constant symbols $0,1$, a binary function symbol $+$, and a family of unary predicates $(P_n)_{n\in\mathbb{N}}$.
Again we write $(\underline{k},u)$ for $k$ as an $\mathcal{L}_{k,{\rm val}}^+$-structure, with $P_n$ interpreted by $n\mathbb{Z}\cup\{\infty\}$, when there is no danger of confusion.
\end{definition}

\begin{definition}
Let $T_{k,u}^+$ be the $\mathcal{L}_{k,{\rm val}}^+$-theory extending $T_{k,u}$ by the following axioms for a structure $(K,\Gamma,v)$:
\begin{enumerate}[ (1)]
\setcounter{enumi}{7}
\item\label{item:1} $v(1)=0$ and $s(0)=1$
\item\label{item:oag} $(\Gamma\setminus\{\infty\},+,0,<)$ is an ordered abelian group, and $\forall \gamma(\gamma+\infty=\infty+\gamma=\infty)$
\item\label{item:Zgroup} $\forall \gamma\exists\delta(\bigvee_{i=1}^n\gamma=n\delta+i1)$, for every $n\in\mathbb{N}$
\item\label{item:Pn} $\forall\gamma( P_n(\gamma)\leftrightarrow \exists \delta(\gamma=n\delta))$, for every $n\in\mathbb{N}$
\end{enumerate}
where as usual we write $n\delta$ for $\delta+\dots+\delta$.
\end{definition}
Note that \eqref{item:oag}+\eqref{item:Zgroup} is equivalent  to $(\Gamma\setminus\{\infty\},+,0,1,<)$ being a $\mathbb{Z}$-group, i.e.~elementarily equivalent to the ordered abelian group $(\mathbb{Z},+,0,1,<)$ \cite[Theorem 4.1.3]{PD}.

\begin{theorem}\label{cor:QE}
$T_{k,u}^+$ has quantifier elimination.
\end{theorem}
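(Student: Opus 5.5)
We prove Theorem~\ref{cor:QE} by rerunning the embedding test from the proof of Theorem~\ref{thm:QE}, now in the richer language. Let $\underline{L},\underline{L}'\models T_{k,u}^+$ with $\underline{L}'$ sufficiently saturated, and let $\underline{K}=(K,\Gamma_0,v_0)$ be a common $\mathcal{L}_{k,{\rm val}}^+$-substructure. It suffices to extend the identity on $\underline{K}$ to an embedding of $\underline{K}[\alpha]$ into $\underline{L}'$ for a single new element $\alpha$. Two facts will be used throughout. By \eqref{item:1}, $v(\lambda\cdot 1)=s^{u(\lambda)}(0)=u(\lambda)\cdot 1$, so the value sort of $\underline{K}$ contains the copy $\mathbb{Z}1\cup\{\infty\}$ of $\mathbb{Z}\cup\{\infty\}$. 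By \eqref{item:Pn}, the value sort carries the theory of $\mathbb{Z}$-groups in the Presburger language with the predicates $P_n$ and constant $1$, and this theory has quantifier elimination (Presburger; see \cite{PD}).

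\textbf{Step 1: the value sort.} First close $\underline{K}$ in the value sort. Let $\Delta\subseteq\Gamma$ be the Presburger substructure generated by $\Gamma_0$, i.e.\ the subgroup $\langle\Gamma_0\rangle\cup\{\infty\}$. The substructure $\Gamma_0$ of $\Gamma$ (with $P_n$, $+$, $0,1,<$) is isomorphic via the identity to its copy in $\Gamma'$. By QE for $\mathbb{Z}$-groups and saturation of $\Gamma'$, this identity extends to an embedding of any $\gamma\in\Gamma$ over $\Gamma_0$. We could even pass to the relative divisible-hull-type closure $\{\gamma: n\gamma\in\langle\Gamma_0\rangle\}$, which is pure and determined by the $P_n$. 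Note that $s$ is $\gamma\mapsto\gamma+1$, so it is automatically respected.

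\textbf{Step 2: new values.} Now adjoin values $\gamma\in\Gamma\setminus v(K)$ that lie in the value sort. Pick $\alpha$ with $v(\alpha)=\gamma$ using \eqref{item:surj}, and choose $\alpha'$ with $v'(\alpha')=\gamma'$, the image of $\gamma$ from Step~1. Adjoining $\alpha$ then adds only the values $\gamma+\mathbb{Z}$, which already lie in the Presburger substructure. Formula \eqref{eqn:v} fixes all valuations in $K+k\alpha$, so the map $b+\lambda\alpha\mapsto b+\lambda\alpha'$ is an embedding, exactly as in the reduction step of Theorem~\ref{thm:QE}. Iterating this, we may assume $v(K)=\Gamma_0$ and that $\Gamma_0$ is a Presburger substructure.

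\textbf{Step 3: a new vector.} For $\alpha\in L\setminus K$ we follow Cases 1--3 of Theorem~\ref{thm:QE}. Cases 2 and 3 go through verbatim, since no new values appear. The main obstacle is Case~1, where $\delta=v(\alpha)\notin\Gamma_0$ and the value sort of $\underline{K}[\alpha]$ becomes $\Gamma_0\sqcup(\delta+\mathbb{Z})$, which must then be closed under $+$. Here the plan is to realize the full quantifier-free Presburger type of $\delta$ over $\Gamma_0$, namely its cut together with its congruences modulo $n$ relative to $\Gamma_0$. By Presburger QE this type is realized by some $\delta'\in\Gamma'$. By \eqref{item:surj} we then get $\alpha'$ with $v'(\alpha')=\delta'$ and $\alpha'\notin K$, and only $p_0$ needs saturation. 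The map $b+\lambda\alpha\mapsto b+\lambda\alpha'$, together with the Presburger isomorphism $\langle\Gamma_0,\delta\rangle\to\langle\Gamma_0,\delta'\rangle$, is compatible with $v$ by \eqref{eqn:v}, since valuations of $K+k\alpha$ only involve $\Gamma_0$ and $\delta+\mathbb{Z}$. This gives the required embedding, and with it quantifier elimination for $T_{k,u}^+$.
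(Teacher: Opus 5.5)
Your proposal is correct, but it takes a different route from the paper. The paper does not re-enter the proof of Theorem~\ref{thm:QE}. It observes that $\mathcal{L}_{k,{\rm val}}^+$ only enriches the value sort, and that $\Gamma$ is a closed sort, since the only symbol linking the sorts is $v\colon K\to\Gamma$. It then invokes the resplendence result \cite[Proposition A.9]{Rid} to conclude that $T_{k,u}^+$ eliminates quantifiers relative to $\Gamma$. Quantifier elimination for $\mathbb{Z}$-groups in $\{+,0,1,<,(P_n)_n\}$ then removes the remaining value-sort quantifiers.

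You instead prove the needed instance of that resplendence principle by hand:
\begin{enumerate}
\item Value-sort elements are adjoined by realizing quantifier-free Presburger types over $\Gamma_0$ in the saturated $\Gamma'$. Passing to the subgroup generated by $\Gamma_0$ is harmless even though there is no subtraction symbol, since $P_n(\gamma_1-\gamma_2)\leftrightarrow P_n(\gamma_1+(n-1)\gamma_2)$.
\item The reduction to $v(K)=\Gamma_0$ goes through as before, and it adds no new values because $\Gamma_0$ is now a group containing $\mathbb{Z}1$.
\item Case~3 is unchanged, since the value sort does not grow.
\item In Case~1 you replace the cut $p_1$ by the full quantifier-free Presburger type of $v(\alpha)$ over $\Gamma_0$. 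This is exactly what is needed, because $v(K+k\alpha)\subseteq\Gamma_0\sqcup(v(\alpha)+\mathbb{Z})$ by \eqref{eqn:v}.
\end{enumerate}
Your version is self-contained and shows that the enrichment only matters when a new $\mathbb{Z}$-orbit of values appears. The paper's version is much shorter, uses Theorem~\ref{thm:QE} as a black box, and works verbatim for any enrichment of the value sort whose own theory has quantifier elimination.

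A few small slips, none affecting correctness:
\begin{itemize}
\item Case~2 does introduce new values; it simply reduces to your modified Case~1.
\item In Case~1, $\alpha'\notin K$ is automatic because $\delta'\notin\Gamma_0=v(K)$, so not even $p_0$ needs saturation. Saturation is used only to realize the Presburger type.
\item In Step~2, the values to be adjoined are those in $\Gamma_0\setminus v(K)$ (with $\Gamma_0$ the group formed in Step~1), not $\Gamma\setminus v(K)$.
\end{itemize}
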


\begin{proof}
This follows from Theorem \ref{thm:QE} for formal reasons, explained in Appendix A of \cite{Rid}:
First of all, $\mathcal{L}_{k,{\rm val}}^+$ is a {\em $\{\Gamma\}$-enrichment} of $\mathcal{L}_{k,{\rm val}}$ in the sense of \cite[Definition A.2]{Rid},
and $\Gamma$ is a {\em closed sort} in the sense of \cite[Definition A.7]{Rid}.
By Theorem \ref{thm:QE}, $T_{k,u}$ eliminates quantifiers, in particular it eliminates quantifiers relatively to $\{\Gamma\}$ in the sense of \cite[Definition A.5]{Rid}.
So since $\Gamma$ is closed,
\cite[Proposition A.9]{Rid} gives that 
$T_{k,u}$ eliminates quantifiers {\em resplendently} relatively to $\{\Gamma\}$: any theory in any $\{\Gamma\}$-enrichment of $\mathcal{L}_{k,{\rm val}}$ eliminates quantifiers relative to $\{\Gamma\}$ and the possibly new sorts \cite[Definition A.5]{Rid}.
In particular, $T_{k,u}^+$ eliminates quantifiers relative to $\{\Gamma\}$.
Since the theory of $\mathbb{Z}$-groups in the language $\{+,0,1,<,(P_n)_{n\in\mathbb{N}})\}$ 
with interpretations of the $P_n$ given by \eqref{item:Pn}
has quantifier elimination
\cite[proof of Theorem 4.1.3]{PD},
$T_{k,u}^+$ in fact eliminates all quantifiers.
\end{proof}

\begin{corollary}\label{cor:model_complete}
$T_{k,u}$ and $T_{k,u}^+$ are complete and model complete.    
\end{corollary}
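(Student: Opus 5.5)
Model completeness will follow directly from quantifier elimination, and completeness from the existence of a prime substructure. By Theorem \ref{thm:QE} and Theorem \ref{cor:QE}, both $T_{k,u}$ and $T_{k,u}^+$ eliminate quantifiers. Every theory with quantifier elimination is model complete: if $\underline{M}\subseteq\underline{N}$ are models and $\varphi(\bar x)$ is any formula, then $\varphi$ is equivalent modulo the theory to a quantifier-free $\psi(\bar x)$. Quantifier-free formulas are preserved in both directions along embeddings, so for $\bar a$ in $\underline{M}$ we get $\underline{M}\models\varphi(\bar a)$ iff $\underline{N}\models\varphi(\bar a)$. This settles model completeness for both theories.

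For completeness I will use the standard criterion: a theory with quantifier elimination that has a model embedding into every model is complete. For $T_{k,u}$, Proposition \ref{prop:Tku_minimal} says that $(\underline{k},u)$ embeds into every model $(K,\Gamma,v)\models T_{k,u}$, and Proposition \ref{prop:Tku_model} says it is itself a model. Given a sentence $\varphi$, take a quantifier-free equivalent $\psi$. Then for any model $\underline{M}$, $\underline{M}\models\varphi$ iff $\underline{M}\models\psi$ iff $(\underline{k},u)\models\psi$ iff $(\underline{k},u)\models\varphi$. The middle equivalence holds because $\psi$ is quantifier-free and $(\underline{k},u)$ embeds into $\underline{M}$. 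Hence all models are elementarily equivalent to $(\underline{k},u)$, so $T_{k,u}$ is complete. If the language has no closed terms in some sort, quantifier-free sentences could be degenerate, but here the constants $0,1,\infty$ ensure the substructure is nonempty in both sorts.

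For $T_{k,u}^+$, I need two additional facts. First, $(\underline{k},u)$ with the natural interpretations $0,1,+$ on $\mathbb{Z}\cup\{\infty\}$ and $P_n=n\mathbb{Z}\cup\{\infty\}$ is a model of axioms \eqref{item:1}--\eqref{item:Pn}; this is clear since $\mathbb{Z}$ is a $\mathbb{Z}$-group. Second, the embedding $(f,g)$ from Proposition \ref{prop:Tku_minimal} must also be an $\mathcal{L}_{k,{\rm val}}^+$-embedding into any model of $T_{k,u}^+$.

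Checking this second fact is the main step. By axiom \eqref{item:1}, $g(0)=s^0(v(1))=0$ and $g(1)=s(0)=1$. In a $\mathbb{Z}$-group, $s(\gamma)=\gamma+1$, since $1$ is the least positive element: this follows from axiom \eqref{item:Zgroup} with $n=2$, or from the quantifier elimination reference. Hence $g(n)=n\cdot 1$, so $g$ respects $+$, and it respects the $\infty$-rules of axiom \eqref{item:oag}.

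For the predicates, I need $P_n(m\cdot1)$ to hold iff $n\mid m$. The forward direction is immediate. For the converse, suppose $m=qn+r$ with $0<r<n$. If $m\cdot 1=n\delta$, then $r\cdot 1=n(\delta-q\cdot1)$. But $0<r\cdot1<n\cdot 1$, so $0<\delta-q\cdot 1<1$, which contradicts discreteness. With the embedding verified, the same argument as for $T_{k,u}$ gives completeness of $T_{k,u}^+$.
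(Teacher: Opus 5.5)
Your proof is correct and follows the paper's route: model completeness comes from quantifier elimination (Theorems \ref{thm:QE} and \ref{cor:QE}), and completeness from $(\underline{k},u)$ being a model that embeds into every model, where your check that $g$ respects $0,1,+,P_n$ is exactly the content of Proposition \ref{prop:k_models_Tkuplus}. One small fix: $1$ is the least positive element because of \eqref{item:s} together with \eqref{item:1} ($1=s(0)$ is the successor of $0$), not because of \eqref{item:Zgroup} with $n=2$, which e.g.\ $(\mathbb{Q},+,0,1,<)$ also satisfies.
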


\begin{proof}
Model completeness follows from quantifier elimination, and this implies completeness since
$(\underline{k},u)$ is a minimal model in both cases:
For $T_{k,u}$ these are Propositions \ref{prop:Tku_model} and \ref{prop:Tku_minimal},
and for $T_{k,u}^+$ see the following Proposition \ref{prop:k_models_Tkuplus}.
\end{proof}

\begin{proposition}\label{prop:k_models_Tkuplus}
$(\underline{k},u)\models T_{k,u}^+$, and $(\underline{k},u)$ embeds uniquely into every $(K,\Gamma,v)\models T_{k,u}^+$.   
\end{proposition}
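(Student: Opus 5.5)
The statement has two parts, and I would treat them in order. First I show that $(\underline{k},u)$, with its natural $\mathcal{L}_{k,{\rm val}}^+$-interpretation, satisfies the new axioms. Then I strengthen Proposition \ref{prop:Tku_minimal} to the language $\mathcal{L}_{k,{\rm val}}^+$.

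\textbf{Axioms.} By Proposition \ref{prop:Tku_model} it only remains to check \eqref{item:1}--\eqref{item:Pn} for the value sort $\mathbb{Z}\cup\{\infty\}$, with $+$, $0$, $1$ and $P_n=n\mathbb{Z}\cup\{\infty\}$ interpreted naturally.
\begin{itemize}
\item Axiom \eqref{item:1} holds because $u(1)=0$ and $s(0)=1$.
\item Axiom \eqref{item:oag} is immediate.
\item For \eqref{item:Zgroup}, take $\gamma\in\mathbb{Z}$ and divide with a remainder $i\in\{1,\dots,n\}$ (rather than $\{0,\dots,n-1\}$). For $\gamma=\infty$ take $\delta=\infty$.
\item Axiom \eqref{item:Pn} holds by the definition of $P_n$, and $\infty=n\infty$.
\end{itemize}

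\textbf{Embedding.} Let $(K,\Gamma,v)\models T_{k,u}^+$. Since $T_{k,u}^+\supseteq T_{k,u}$, Proposition \ref{prop:Tku_minimal} gives a unique $\mathcal{L}_{k,{\rm val}}$-embedding $(f,g)$, with $f(\lambda)=\lambda\cdot1^K$ and $g(n)=s^n(v(1^K))$, $g(\infty)=\infty$. Any $\mathcal{L}^+_{k,{\rm val}}$-embedding is in particular such an embedding, so uniqueness is automatic. It remains to show that $g$ respects $0$, $1$, $+$ and the $P_n$.

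By \eqref{item:1}, $g(0)=v(1^K)=0^\Gamma$ and $g(1)=s(0^\Gamma)=1^\Gamma$. Next I show that $s(\gamma)=\gamma+1$ for all $\gamma\neq\infty$. In an ordered abelian group, $\gamma<\gamma+1$, and there is nothing strictly between $\gamma$ and $\gamma+1$: otherwise, subtracting $\gamma$ would give an element strictly between $0$ and $1$, contradicting that $1=s(0)$ is the successor of $0$ by \eqref{item:s}. Hence $g(n)=n\cdot1^\Gamma$ (using negatives for $n<0$), so $g$ is additive on $\mathbb{Z}$. The cases involving $\infty$ follow from the second clause of \eqref{item:oag}.

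\textbf{Main obstacle: the predicates $P_n$.} We need $P_n(g(m))\leftrightarrow n\mid m$. If $m=nq$, then $g(m)=n\,g(q)$, so $P_n(g(m))$ holds by \eqref{item:Pn}. Conversely, suppose $n\nmid m$ but $m\cdot1=n\delta$ for some $\delta$. Write $m=nq+i$ with $0<i<n$. Then $i\cdot1=n\delta'$ with $\delta'=\delta-q\cdot1$. Since $0<i\cdot1<n\cdot1$, we get $0<n\delta'<n\cdot1$, and in an ordered group this forces $0<\delta'<1$. This contradicts discreteness around $0$ (as above, $1$ is the successor of $0$). The remaining case is $g(\infty)=\infty\in P_n$, which holds.

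Preservation of $<$ and $\infty$ is already part of Proposition \ref{prop:Tku_minimal}. So $(f,g)$ is the required $\mathcal{L}^+_{k,{\rm val}}$-embedding, and it is unique.
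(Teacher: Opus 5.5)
Your proof is correct and follows essentially the same route as the paper. You check (8)--(11) directly on $\mathbb{Z}\cup\{\infty\}$, then show that the maps $f,g$ from Proposition \ref{prop:Tku_minimal} respect $0$, $1$, $+$ (via $s(\gamma)=\gamma+1$, which the paper states as $\gamma+s(\delta)=s(\gamma+\delta)$) and respect the $P_n$ (by reducing $m1\in n\Gamma$ with $n\nmid m$ to $i1\in n\Gamma$ for $0<i<n$). Your contribution beyond the paper is filling in details it leaves implicit, such as why $i1\in n\Gamma$ would produce an element strictly between $0$ and $1$, contradicting (5)+(8).
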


\begin{proof}
As it is clear that $(\underline{k},u)$ satisfies \eqref{item:1}--\eqref{item:Pn} (with $P_n$ interpreted as $n\mathbb{Z}\cup\{\infty\})$, $(\underline{k},u)$ is also a model of $T_{k,u}^+$.
And if $(K,\Gamma,v)\models T_{k,u}^+$, then the maps $f,g$ from the proof of Proposition~\ref{prop:Tku_minimal} are also compatible with the new symbols on the value sort: For $0,1$ this follows from \eqref{item:1}, and
$g(n+m)=s^{n+m}(v(1))=s^n(s^m(v(1)))=s^n(g(m))=g(m)+g(n)$ since $\gamma+s(\delta)=s(\gamma+\delta)$ by \eqref{item:1}+\eqref{item:oag}.
This implies that if $m\in n\mathbb{Z}$, then $g(m)\in n\Gamma$; 
conversely, if $m\in i+n\mathbb{Z}$ for $i=1,\dots,n-1$, then $g(m)\notin n\Gamma$, since otherwise $i1\in n\Gamma$, contradicting 
\eqref{item:s}+\eqref{item:1}.
So since $(K,\Gamma,v)$ satisfies \eqref{item:Pn}, $g$ is compatible with the $P_n$.
\end{proof}

\begin{remark}\label{rem:other_QE}
There is a significant overlap of the results in this section with \S2 of \cite{Dries}, which we discovered after writing this note, and which we will discuss now.
Instead of demanding that $u(k^\times)=\mathbb{Z}$ and that the value set $(\Gamma,<)$ of $K$ is discretely ordered with successor function $s$, van den Dries works with an arbitrary valued field $(k,u)$ and an action $u(k^\times)\times\Gamma\rightarrow\Gamma$. In \cite[Theorem 3]{Dries} he then proves quantifier elimination in the special case where $u(k^\times)$ (and thus also $\Gamma$) has a smallest positive element (which includes the case $u(k^\times)=\mathbb{Z}$ that we consider),
but in spite of the two-sorted algebraic setup,
he does this in a {\em one-sorted} first-order language with a divisibility relation $|$, similar to what we will deduce in Section \ref{sec:ultrametric} below.
In particular, he does not consider a language with a symbol for (binary) addition in the value set (like in our Theorem~\ref{cor:QE}), 
and it seems more difficult to deduce such results in the one-sorted setting,
which is why we decided to not remove our presentation.
We remark that in \cite{Dries}, the analogue of (\ref{item:residue_infinite}'), which is relevant if $\kappa$ is infinite, does not appear as an axiom but is deduced in \cite[Lemma 3]{Dries} from the other axioms.

Other related results on valued vector spaces and valued abelian groups do not cover 
Theorems~\ref{thm:QE} and \ref{cor:QE}:
\cite{KK,Touchard} work only with value-preserving scalar multiplication, \cite{Weispfenning} eliminates quantifiers in discretely valued vector spaces modulo a background theory of valued {\em fields} (so with multiplication in the language),
and \cite{Schmitt,Mariaule} obtain quantifier elimination for valued abelian groups like $\mathbb{Z}_p$, but do not apply to $\mathbb{Q}_p$.
\end{remark}

\section{Absolute values}
\noindent
In order to treat ultrametric and archimedean absolute values on $k$ simultaneously, we now switch to the one-sorted language
$\mathcal{L}_{k}$
that we expand by binary respectively ternary relation symbols $L$ and $M$ as in the introduction.
To make formulas more readable we will use the abbreviations
\begin{eqnarray*}
  L(x,y,z) &:\Longleftrightarrow& L(x,y)\wedge L(y,z) \\
  \Lambda(x,y) &:\Longleftrightarrow& L(x,y)\wedge L(y,x)\\
  \Lambda(x,y,z) &:\Longleftrightarrow& L(x,y)\wedge L(y,z)\wedge L(z,x).
\end{eqnarray*}
 
\subsection{Ultrametric absolute values}\label{sec:ultrametric}
An ultrametric absolute value $|.|$ gives rise to a valuation (of rank 1) given by $v(x)=-\log|x|$, and this induces a bijection between the set of finite places of $k$ and the equivalence classes of nontrivial valuations on $k$ of rank $1$.

\begin{definition}
We denote by 
$\mathcal{L}_{k,{\rm fin}}$ 
the one-sorted language $\mathcal{L}_k\cup\{L\}$
where $L$ is a binary relation symbol,
and by
$\mathcal{L}_{k,{\rm fin}}^+$ the one-sorted language $\mathcal{L}_{k,{\rm fin}}\cup\{M,(Q_n)_{n\in\mathbb{N}}\}$,
where $M$ is a ternary and $Q_n$ is a unary relation symbol.
For $\mathfrak{p}\in\mathbb{P}_k$,
we write $(\underline{k},\mathfrak{p})$ for the
$\mathcal{L}_{k,{\rm fin}}^+$-structure
$(\underline{k},L_\mathfrak{p},M_\mathfrak{p},(Q_{n,\mathfrak{p}})_{n\in\mathbb{N}})$ with $L_\mathfrak{p}$ and $M_\mathfrak{p}$ as defined in the introduction,
and 
$$
 Q_{n,\mathfrak{p}}=\left\{x\in k:\exists y\in k(|x|_\mathfrak{p}=|y|_\mathfrak{p}^n)\right\}.
$$
\end{definition}

From now on we will sometimes state results for $\mathcal{L}_{k,{\rm val}}$ and $\mathcal{L}_{k,{\rm val}}^+$
or for
$\mathcal{L}_{k,{\rm fin}}$ and $\mathcal{L}_{k,{\rm fin}}^+$
simultaneously, and we will indicate this by writing 
$\mathcal{L}_{k,{\rm val}}^{[+]}$ respectively
$\mathcal{L}_{k,{\rm fin}}^{[+]}$, and then similarly for theories.

In the following,
we work with interpretations in the notation of \cite[\S5.3]{Hodges_long}.
In particular, $\delta_\varphi$ denotes the domain formula and $f_\varphi$ denotes the coordinate map of the interpretation $\varphi$.

\begin{lemma}\label{lem:interpretation}
Let $\pi$ be a uniformizer of $u$, i.e.~$u(\pi)=1$.
\begin{enumerate}[(a)]
    \item Every $\mathcal{L}_{k,{\rm val}}^{[+]}$-structure $K=(K,\Gamma,v)$ interprets an $\mathcal{L}_{k,{\rm fin}}^{[+]}$-structure $K^{(1)}$ by setting $\delta_\varphi(x)=x\in K$, $f_\varphi={\rm id}_K$,
\begin{eqnarray*}
\label{eqn:L}    L(x,y)_\varphi &:=& v(x)\geq v(y), \\
\label{eqn:M} M(x,y,z)_\varphi &:=& v(x)+v(y)=v(z),\\
\label{eqn:Q} Q_{n}(x)_\varphi &:=& P_n(v(x)),
\end{eqnarray*}    
where the last two lines are to be ignored
in case of $\mathcal{L}_{k,{\rm val}}$.
In particular, there exists a reduction map $\varphi\mapsto\varphi_\varphi$ from $\mathcal{L}_{k,{\rm fin}}^{[+]}$-formulas to $\mathcal{L}_{k,{\rm val}}^{[+]}$-formulas such that for every $\mathcal{L}_{k,{\rm val}}^{[+]}$-structure $K=(K,\Gamma,v)$ 
and every $a\in K^n$
we have $K^{(1)}\models\varphi(a)\Leftrightarrow K\models\varphi_\varphi(a)$
(where we identify the domain of $K^{(1)}$ with the vector space sort of $K$).
\item Every $\mathcal{L}_{k,{\rm fin}}^{[+]}$-structure $K$ 
that satisfies the admissibility conditions $T_\Psi^{[+]}$ (see \cite[p.~214]{Hodges_long}) of the
domain formula
\begin{eqnarray*}
    \delta_\Psi(x_1,x_2)&:=&x_2=0\vee x_2=1
\end{eqnarray*}
and the defining formulas
\begin{eqnarray*}
    (x=y)_\Psi &:=& x_2=0\wedge y_2=0\wedge x_1=y_1,\\
    (x=0)_\Psi &:=& x_2=0\wedge x_1=0,\\
    (x=1)_\Psi &:=& x_2=0\wedge x_1=1,\\
    (x+y=z)_\Psi &:=& x_2=0\wedge y_2=0\wedge z_2=0\wedge x_1+y_1=z_1,\\
    (y=\lambda x)_\Psi &:=& x_2=0\wedge y_2=0\wedge y_1=\lambda x_1,\\
    (\gamma=\delta)_\Psi &:=& x_2=1\wedge y_2=1\wedge \Lambda(x_1,y_1),\\
    (\gamma=0)_\Psi &:=& x_2=1\wedge \Lambda(x_1,1) ,\\
    (\gamma=1)_\Psi &:=& x_2=1\wedge \Lambda(x_1,\pi),\\
    (\gamma=\infty)_\Psi &:=& x_2=1\wedge x_1=0,\\
    (\gamma\leq\delta)_\Psi &:=& x_2=1\wedge y_2=1\wedge L(x_1,y_1) ,\\
    (s(\gamma)=\delta)_\Psi &:=& x_2=1\wedge y_2=1\wedge \Lambda(\pi x_1,y_1),\\
    (v(x)=\gamma)_\Psi &:=& x_2=0\wedge y_2=1\wedge \Lambda(x_1,y_1),\\
        (\gamma=\delta+\epsilon)_\Psi &:=& x_2=1\wedge y_2=1\wedge z_2=1\wedge M(x_1,y_1,z_1),\\
    (P_n(\gamma))_\Psi &:=& x_2=1\wedge Q_n(x_1),
\end{eqnarray*}
where again the last two lines are to be ignored in case $\mathcal{L}_{k,{\rm fin}}$,
interprets an $\mathcal{L}_{k,{\rm val}}^{[+]}$-structure $K^{(2)}=(K,K/\sim,v)$ by setting 
\begin{eqnarray*}
 x\sim y &:\Leftrightarrow& K\models L(x,y)\wedge L(y,x)\\
 v(x) &:=& [x]_\sim := \{y\in K : x\sim y\}\\
    f_\Psi(x_1,x_2)&:=&\begin{cases}x_1,&\mbox{if }x_2=0,\\
    [x_1]_{\sim},&\mbox{if }x_2=1.\end{cases}    
\end{eqnarray*}
In particular, there exists a reduction map $\varphi\mapsto\varphi_\Psi$ from $\mathcal{L}_{k,{\rm val}}^{[+]}$-formulas 
to $\mathcal{L}_{k,{\rm fin}}^{[+]}$-formulas,
such that if $\varphi$ has no free variables in the value sort,
and $K$  is an $\mathcal{L}_{k,{\rm fin}}^{[+]}$-structure that satisfies $T_\Psi^{[+]}$,
we have $K^{(2)}\models\varphi(a)\Leftrightarrow K\models\varphi_\Psi'(a)$ for every $a\in K^n$, where
$$
 \varphi_\Psi'(x_1,\dots,x_n):=\varphi_\Psi(x_1,0,\dots,x_n,0).
$$ 
\item For every $\mathcal{L}_{k,{\rm val}}^{[+]}$-structure $K=(K,\Gamma,v)$, we have $(K^{(1)})^{(2)}=(K,v(\Gamma),v)$, so if $K$ satisfies \eqref{item:surj}, then $(K^{(1)})^{(2)}\cong K$.
If $K$ is an $\mathcal{L}_{k,{\rm fin}}^{[+]}$-structure that satisfies $T_\Psi^{[+]}$, then $(K^{(2)})^{(1)}\cong K$. 
In fact, $\varphi$ and $\Psi$ give biinterpretations in these cases.
\item 
If $\varphi$ is existential, then so is $\varphi_\Psi'$,
and if $\varphi$ is a quantifier-free $\mathcal{L}_{k,{\rm val}}$-formula, then $\varphi_\Psi'$ is a quantifier-free $\mathcal{L}_{k,{\rm fin}}$-formula.
\end{enumerate}    
\end{lemma}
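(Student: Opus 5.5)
This is essentially a bookkeeping argument within the general theory of interpretations in \cite[\S5.3]{Hodges_long}. For each part I would check that the proposed data form an interpretation, and then invoke the reduction theorem for interpretations, which says that every formula $\varphi$ of the interpreted language has a translation $\varphi_\Phi$ with $\Phi(K)\models\varphi(f_\Phi(\bar a))\Leftrightarrow K\models\varphi_\Phi(\bar a)$.

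For (a), the interpretation is one-dimensional with domain all of the vector space sort, so it is essentially a definitional expansion. Its only effect is to replace each atomic $\mathcal{L}_{k,{\rm fin}}^{[+]}$-formula by the corresponding formula in $v$, $\geq$, $+$ and $P_n$. The reduction map is then defined by induction on formulas, commuting with the connectives and with quantifiers, where quantifiers are relativized to the vector space sort.

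For (b), I would first spell out $T_\Psi^{[+]}$. It should say that $\Lambda$ is an equivalence relation on $K$, that the relations $L$, $\Lambda(\pi x_1,y_1)$ and $M$ are $\Lambda$-invariant, and that $\Lambda(\pi x_1,y_1)$ defines a total function on classes. Since $x_1=0$ is used for $\infty$, the class of $0$ must be $\{0\}$. With these conditions, \cite[\S5.3]{Hodges_long} directly yields the structure $K^{(2)}$ and the reduction $\varphi\mapsto\varphi_\Psi$. One subtlety is that $0$ and $1$ are constants of both sorts: the vector $0$ is coded by $(0,0)$, while the value $\infty$ is coded by $(0,1)$ and the value $0$ by $(1,1)$. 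This coding is consistent because $x_2$ distinguishes the sorts. The statement for $\varphi_\Psi'$ follows by substituting $x_{i,2}=0$ for the vector-sort free variables, since $f_\Psi(a,0)=a$.

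For (c), I would compute directly. The structure $(K^{(1)})^{(2)}$ has vector space sort $K$ and value sort $K/\!\sim$, where $x\sim y$ iff $v(x)=v(y)$. Hence $[x]\mapsto v(x)$ is a bijection from $K/\!\sim$ onto $v(K)$. The claim writes $v(\Gamma)$, which I read as the image $v(K)$. One then checks that this bijection respects $<$, $s$, $\infty$, $0$, $1$, $+$ and $P_n$. The checks for $s$ and $1$ use axiom \eqref{item:compatible} with $u(\pi)=1$, and the check for $+$ uses that $v(x)+v(y)$ is the value of the element $z$ witnessing $M$. If \eqref{item:surj} holds, this gives $\cong K$. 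Conversely, $(K^{(2)})^{(1)}$ has domain $K$ and $L$ translated back to $[x]\geq[y]$, which is exactly $L(x,y)$, and similarly for $M$ and $Q_n$. That these maps are definable in both directions gives a biinterpretation.

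The main point needing care is (d), and I expect it to be the main obstacle. The worry is that a quantifier over the two-dimensional domain, namely $\exists(x_1,x_2)(\delta_\Psi\wedge\dots)$, might introduce extra quantifiers. An existential quantifier over the value sort becomes $\exists x_1$ with $x_2$ fixed to $1$, which stays existential. Value-sort equality translates to $\Lambda$, which is quantifier-free. For a quantifier-free $\mathcal{L}_{k,{\rm val}}$-formula without free value variables, the value terms have the form $s^n(v(t))$ or $\infty$. I would translate them by induction to $\Lambda$- and $L$-statements about $\pi^n t$, using that $\pi^{-1}$ is a scalar, and to $t=0$. Because no function symbols of $\mathcal{L}_{k,{\rm val}}$ create new values except $s$, this avoids any quantifier. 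Adding $+$ on values would require existential witnesses, which is why the statement restricts to $\mathcal{L}_{k,{\rm val}}$.
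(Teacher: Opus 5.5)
Your proposal is correct and follows essentially the paper's route. Parts (a) and (b) are routine instances of Hodges' interpretation machinery. Part (c) is a direct check, and the biinterpretation is witnessed by the definable correspondences $x\leftrightarrow(x,0)$ and $v(\eta_1)\leftrightarrow(\eta_1,1)$, which are exactly the paper's homotopy formulas $\chi_{\varphi\circ\Psi}$ and $\chi_{\Psi\circ\varphi}$. Part (d) combines two facts: all defining formulas of $\Psi$ are quantifier-free, and value terms $s^n(v(t))$ and $\infty$ translate directly to $\pi^n t$ and $0$.

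Your check of $<$ in (c) glosses over one point in the statement. Read literally, the clause $(\gamma\leq\delta)_\Psi:=L(x_1,y_1)$ reverses the order: it makes $[0]=\infty$ the least class. It must therefore be read as $L(y_1,x_1)$. That is the orientation you implicitly use when you say $[x]\geq[y]$ is exactly $L(x,y)$.
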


\begin{proof}
(a) and (b) are clear, and the isomorphisms $(K^{(1)})^{(2)}\cong K$ respectively $(K^{(2)})^{(1)}\cong K$ are easily checked.
Homotopies between the identity and $\varphi\circ\Psi$, and between the identity and $\Psi\circ\varphi$ (cf.~\cite[\S5.4(c)]{Hodges_long}) are given by the formulas 
$$
 \chi_{\varphi\circ\Psi}(x;y_1,y_2) \;:=\; x=y_1\wedge y_2=0.
$$
respectively
$$
 \chi_{\Psi\circ\varphi}(\xi;\eta_1,\eta_2) \;:=\; \eta_1\in K\wedge\eta_2\in K\wedge((\xi\in K\wedge \xi=\eta_1\wedge \eta_2=0)
 \vee (\xi\in\Gamma\wedge \xi=v(\eta_1)\wedge \eta_2=1)),
$$
where $\xi,\eta_1,\eta_2$ denote variables in any of the two sorts.

(d): The reduction map of $\Psi$ preserves existential formulas  simply since $\delta_\Psi$ and all the defining formulas of $\Psi$ are existential, in fact even quantifier-free (cf.~\cite[Remark 3 on p.~215]{Hodges_long}).
The fact that the reduction map can be made to preserve quantifier-free formulas follows from the observation that in $\mathcal{L}_{k,{\rm val}}$, 
every term in the vector space sort can be translated verbatim to a $\mathcal{L}_{k,{\rm fin}}$-term, and
every term in the value sort is of the form $s^n(\gamma)$ for $n\in\mathbb{Z}$ and $\gamma$ either a variable or $\infty$, which can be translated directly to $\pi^nx$ respectively $0$, without introducing additional existential quantifiers, e.g.
\begin{eqnarray*}
 (v(x+y)=s^n(\gamma))_\Psi &=& x_2=0\wedge y_2=0\wedge z_2=1\wedge \Lambda(x_1+y_1,\pi^nz_1).\qedhere
\end{eqnarray*}
\end{proof}

\begin{definition}
For $\mathfrak{p}\in\mathbb{P}_k'$ finite,
we let $T_{k,\mathfrak{p}}^{[+]}$ be the $\mathcal{L}_{k,{\rm fin}}^{[+]}$-theory 
$T_k\cup T_\Psi^{[+]}\cup\{\varphi_\Psi:\varphi\in T_{k,v_\mathfrak{p}}^{[+]}\}$.
\end{definition}

\begin{theorem}\label{thm:finite}
For $\mathfrak{p}\in\mathbb{P}_k'$ finite,
$T_{k,\mathfrak{p}}$ is complete and has quantifier elimination, 
$T_{k,\mathfrak{p}}^+$ is complete and model complete,
and $(\underline{k},\mathfrak{p})\models T_{k,\mathfrak{p}}^{[+]}$ embeds uniquely into every model of $T_{k,\mathfrak{p}}^{[+]}$.
\end{theorem}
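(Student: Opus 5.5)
The plan is to transfer everything from the two-sorted setting of Section~\ref{sec:QE} along the bi-interpretation of Lemma~\ref{lem:interpretation}, with $u=v_\mathfrak{p}$ the normalized discrete valuation attached to $\mathfrak{p}$. Since $\mathfrak{p}\in\mathbb{P}_k'$ is finite, such a $u$ exists, and $(\underline{k},\mathfrak{p})$ is exactly $(\underline{k},u)^{(1)}$: indeed $|x|_\mathfrak{p}\le|y|_\mathfrak{p}$ iff $u(x)\ge u(y)$, $|xy|_\mathfrak{p}=|z|_\mathfrak{p}$ iff $u(x)+u(y)=u(z)$, and $Q_{n,\mathfrak{p}}$ corresponds to $P_n$.

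\textbf{Models correspond.} First I would show that the models of $T_{k,\mathfrak{p}}^{[+]}$ are exactly the structures $K^{(1)}$ with $K\models T_{k,u}^{[+]}$ (up to isomorphism).
\begin{itemize}
\item If $K\models T_{k,u}^{[+]}$, then $K^{(1)}$ satisfies $T_k$ and $T_\Psi^{[+]}$. For $\varphi\in T_{k,u}^{[+]}$ we get $K^{(1)}\models\varphi_\Psi$ iff $(K^{(1)})^{(2)}\models\varphi$. By Lemma~\ref{lem:interpretation}(c), axiom \eqref{item:surj} gives $(K^{(1)})^{(2)}\cong K$, which satisfies $\varphi$.
\item Conversely, if $M\models T_{k,\mathfrak{p}}^{[+]}$, then $M^{(2)}\models T_{k,u}^{[+]}$ by the reduction property, and $(M^{(2)})^{(1)}\cong M$.
\end{itemize}
In particular $(\underline{k},\mathfrak{p})\models T_{k,\mathfrak{p}}^{[+]}$ by Propositions~\ref{prop:Tku_model} and \ref{prop:k_models_Tkuplus}.

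\textbf{Completeness.} This follows directly. If $M,N\models T_{k,\mathfrak{p}}^{[+]}$, then $M^{(2)}\equiv N^{(2)}$ by Corollary~\ref{cor:model_complete}, hence $M\cong(M^{(2)})^{(1)}\equiv(N^{(2)})^{(1)}\cong N$, since interpretations preserve elementary equivalence via the reduction map of (a).

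\textbf{Unique embedding.} Let $M\models T_{k,\mathfrak{p}}^{[+]}$. Any $\mathcal{L}_{k,{\rm fin}}^{[+]}$-embedding of $\underline{k}$ into $M$ must send $\lambda$ to $\lambda\cdot1^M$ by Proposition~\ref{prop:Tk}, so it is unique if it exists. For existence, use the embedding $(f,g)$ of $(\underline{k},u)$ into $M^{(2)}$ from Propositions~\ref{prop:Tku_minimal} and \ref{prop:k_models_Tkuplus}. Since $L,M,Q_n$ are defined in the value sort via $v$, the map $f$ preserves and reflects them: for example $L^M(f(x),f(y))$ iff $v(f(x))\le v(f(y))$ in the reversed convention, iff $g(u(x))\ge g(u(y))$, iff $u(x)\ge u(y)$, using that $g$ is an order embedding. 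The same argument works for $M$ and $Q_n$ using compatibility with $+$ and $P_n$.

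\textbf{Quantifier elimination and model completeness.} These are the main obstacle, because the value sort is only imaginary in the one-sorted language, and I expect the language $\mathcal{L}_{k,{\rm fin}}$ (without $+$ on values) to be essential. Take an $\mathcal{L}_{k,{\rm fin}}$-formula $\varphi(x)$. Translate it by $\varphi_\varphi$ to an $\mathcal{L}_{k,{\rm val}}$-formula whose free variables are all in the vector space sort. By Theorem~\ref{thm:QE}, it is $T_{k,u}$-equivalent to a quantifier-free $\psi(x)$. By Lemma~\ref{lem:interpretation}(d), $\psi_\Psi'$ is quantifier-free, and in every model $M$ we have $M\models\varphi(a)$ iff $M^{(2)}\models\psi(a)$ iff $M\models\psi'_\Psi(a)$. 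One point needs care: an atomic value-sort formula in $\psi$ has the form $s^n(t_1)\le s^m(t_2)$ with $t_i$ equal to $v$(term) or $\infty$, and it translates to a quantifier-free $L$-statement, as in the proof of (d).

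For $T^+_{k,\mathfrak{p}}$, the same argument using Theorem~\ref{cor:QE} only yields existential formulas, since $+$ on values becomes $M$ applied to witnesses. Indeed, $v(t_1)+v(t_2)=v(t_3)$ translates directly to $M(t_1,t_2,t_3)$, but nested sums such as $v(t_1)+v(t_2)+v(t_3)$ require auxiliary vector-space witnesses. Existential translations of all formulas (and hence of negations, which are again formulas) give model completeness. Together with completeness, this finishes the proof.
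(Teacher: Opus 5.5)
Your proposal is correct and follows essentially the paper's proof. Completeness and the unique embedding are transferred along the bi-interpretation of Lemma~\ref{lem:interpretation} with $(\underline{k},v_\mathfrak{p})$; quantifier elimination for $T_{k,\mathfrak{p}}$ and model completeness for $T_{k,\mathfrak{p}}^+$ are pulled back through the reduction map of $\Psi$, which preserves quantifier-free $\mathcal{L}_{k,{\rm val}}$-formulas and existential formulas (Lemma~\ref{lem:interpretation}(d)). The only difference is cosmetic: for $T_{k,\mathfrak{p}}^+$ you start from quantifier elimination of $T_{k,u}^+$ (Theorem~\ref{cor:QE}), where the paper invokes only its model completeness, and both give the same existential translations.
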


\begin{proof}
Note that every $K\models T_{k,\mathfrak{p}}^{[+]}$ is biinterpretable with $K^{(2)}\models T_{k,v_\mathfrak{p}}^{[+]}$ (Lemma \ref{lem:interpretation}(c)),
so completeness of $T_{k,\mathfrak{p}}^{[+]}$ follows from completeness of $T_{k,v_\mathfrak{p}}^{[+]}$ (Corollary \ref{cor:model_complete}).
Model completeness of $T_{k,\mathfrak{p}}^+$ follows from the fact that $T_{k,v_\mathfrak{p}}^+$ is model complete (Corollary \ref{cor:model_complete}) and that the reduction map of $\Psi$ preserves existential formulas (Lemma \ref{lem:interpretation}(d)).
Similarly, quantifier elimination of 
 $T_{k,\mathfrak{p}}$ follows from the fact that $T_{k,v_\mathfrak{p}}$ has quantifier elimination (Theorem \ref{thm:QE}) and that the reduction map of $\Psi$ preserves quantifier-free formulas when restricted to $\mathcal{L}_{k,{\rm val}}$ (Lemma \ref{lem:interpretation}(d)).
 Since $(\underline{k},\mathfrak{p})^{(2)}\cong(\underline{k},v_\mathfrak{p})\models T_{k,v_\mathfrak{p}}^+$ (Proposition \ref{prop:k_models_Tkuplus}), we see that
 $(\underline{k},\mathfrak{p})\models T_{k,\mathfrak{p}}^+$.
 Since there is a unique embedding of $\underline{k}$ into every model of $T_k$ that satisfies \eqref{item:vectorspace} (Proposition \ref{prop:Tk}), there is at most one embedding of $(\underline{k},\mathfrak{p})$ into a fixed $K\models T_{k,\mathfrak{p}}^{[+]}$,
 and since the unique $\mathcal{L}_k$-embedding $f\colon\underline{k}\rightarrow K$ is 
 compatible with the unique $\mathcal{L}_{k,{\rm val}}^{[+]}$-embedding 
 $(\underline{k},\mathfrak{p})^{(2)}\cong(\underline{k},v_\mathfrak{p})\rightarrow K^{(2)}$ (Proposition \ref{prop:k_models_Tkuplus}),
 we see that $f$ is in fact an $\mathcal{L}_{k,\mathfrak{p}}^{[+]}$-embedding.
\end{proof}

\begin{remark}\label{rmk:finiteplaceL+noEQ}
In the situation of Theorem \ref{thm:finite}, $T^+_{(k,\mathfrak{p})}$ does not have quantifier elimination.
For example, the $\mathcal{L}_{k,{\rm fin}}^+$-formula 
\[ 
\varphi(x,y) \coloneqq \exists z   (M(x,x,z) \wedge M(x,z,y)),
\] 
stating $|x^3|_\mathfrak{p}=|y|_\mathfrak{p}$, is not equivalent modulo $T^+_{(k,\mathfrak{p})}$ to a quantifier-free formula.  
To see this, 
use the compactness theorem to obtain an elementary extension $(K,\Gamma,v)$ of the valued {\em field}
$(k,\mathbb{Z}\cup \{\infty\}, v_\mathfrak{p})$ that contains an element $0\neq a \in K$ with $v(a)\in n\Gamma$ for every $n \in \mathbb{N}$. 
One can then show
that the
$\mathcal{L}_{k,{\rm fin}}^{+}$-substructures
$k \oplus ak \oplus a^3 k$ and $k \oplus ak \oplus a^4 k$
of $(K|_{\mathcal{L}_{k,{\rm val}}^+})^{(1)}$ are isomorphic via the $k$-linear map sending the basis $(1,a,a^3)$ to $(1,a,a^4)$,
although $(K,\Gamma,v)\models\varphi(a,a^3)\wedge\neg\varphi(a,a^4)$.
\end{remark}

\subsection{Real absolute values}

A real place $\mathfrak{p}$ of $k$ defines an embedding $k\rightarrow\mathbb{R}=\hat{k}_\mathfrak{p}$ and therefore an (archimedean) ordering $\leq_\mathfrak{p}$ on $k$.

\begin{lemma}\label{lem:real}
For a real place $\mathfrak{p}$ of $k$, and $x,y\in k$, we have
\begin{eqnarray*}
\label{eqn:leq} x\leq_\mathfrak{p} y &\Longleftrightarrow& |y-x-1|_\mathfrak{p}\leq|y-x+1|_\mathfrak{p},\\
 |x|_\mathfrak{p}\leq|y|_\mathfrak{p} &\Longleftrightarrow& 
 y\geq_\mathfrak{p}x\geq_\mathfrak{p}0\vee y\geq_\mathfrak{p}-x\geq_\mathfrak{p}0\vee -y\geq_\mathfrak{p}x\geq_\mathfrak{p}0\vee -y\geq_\mathfrak{p}-x\geq_\mathfrak{p}0.
\end{eqnarray*}
\end{lemma}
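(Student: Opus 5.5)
We work entirely inside $\mathbb{R}$ via the embedding $\iota\colon k\to\hat{k}_\mathfrak{p}=\mathbb{R}$ given by the real place $\mathfrak{p}$. Up to equivalence we may take $|x|_\mathfrak{p}=|\iota(x)|$, the usual absolute value; any representative in the class is a positive power $|\iota(x)|^t$ with $0<t\le 1$. Since $r\mapsto r^t$ is strictly increasing on $\mathbb{R}_{\ge0}$, both sides of each equivalence are unchanged by this choice. The ordering $\leq_\mathfrak{p}$ is by definition the pullback of the order of $\mathbb{R}$ along $\iota$. So both equivalences reduce to elementary facts about real numbers, and we verify them for real $x,y$.

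For the first equivalence, put $d=y-x\in\mathbb{R}$. The claim becomes $d\ge0\iff|d-1|\le|d+1|$. Both sides of the inequality are nonnegative, so we square them. This gives $(d-1)^2\le(d+1)^2$, which simplifies to $-2d\le 2d$, that is, $d\ge0$.

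For the second equivalence, we use $|x|=\max(x,-x)$ and split into cases according to the signs of $x$ and $y$.
\begin{itemize}
\item If $x\ge0$ and $y\ge0$, then $|x|\le|y|$ iff $y\ge x\ge0$, which is the first disjunct.
\item If $x\le0$ and $y\ge0$, then $|x|\le|y|$ iff $y\ge -x\ge0$, the second disjunct.
\item If $x\ge0$ and $y\le0$, then $|x|\le|y|$ iff $-y\ge x\ge0$, the third disjunct.
\item If $x\le0$ and $y\le0$, then $|x|\le|y|$ iff $-y\ge -x\ge0$, the fourth disjunct.
\end{itemize}
This shows that $|x|\le|y|$ implies the disjunction.

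Conversely, each disjunct has the form $\pm y\ge \pm x\ge0$. Taking absolute values of both sides gives $|x|\le|y|$.

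There is no real obstacle here. The only point needing care is that $|.|_\mathfrak{p}$ is only fixed up to equivalence, and the monotonicity remark in the first paragraph disposes of it.
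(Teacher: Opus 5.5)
Your proof is correct and takes essentially the same approach as the paper: transfer to $\mathbb{R}$ via the embedding given by $\mathfrak{p}$, check elementary facts there, and treat the second equivalence as routine. The one difference is minor. For the first equivalence the paper notes that the Möbius map $z\mapsto\frac{z-1}{z+1}$ sends $\mathbb{R}_{\geq0}$ into $[-1,1)$, whereas you square to get $(d-1)^2\le(d+1)^2\iff d\ge0$, which is more direct. Your remark on the choice of representative of $|.|_\mathfrak{p}$ makes explicit a point the paper leaves implicit.
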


\begin{proof}
The first equivalence holds in $\mathbb{R}$ since the Möbius transformation $z\mapsto\frac{z-1}{z+1}$ maps $\mathbb{R}_{\geq0}$ to $[-1,1)$ and $\infty$ to $1$,
and therefore it holds in $k$.
The second equivalence is obvious.
\end{proof}

\begin{proposition}\label{prop:real_undecidable}
For a real place $\mathfrak{p}$ of $k$,
multiplication on $k$ is quantifier-freely definable in
the structure $(k,+,0,1,\cdot_{-1},L_\mathfrak{p},M_\mathfrak{p})$,
uniformly in $k$. 
\end{proposition}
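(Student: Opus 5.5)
We aim to write down a quantifier-free formula $\mu(x,y,z)$ in the language $\{+,0,1,\cdot_{-1},L,M\}$ such that for every global field $k$ and every real place $\mathfrak{p}$, we have $(k,\dots,L_\mathfrak{p},M_\mathfrak{p})\models\mu(a,b,c)$ if and only if $ab=c$. As in the remark in the introduction, the idea is to split the equation $ab=c$ into two parts. The first part is the equality of absolute values $|ab|_\mathfrak{p}=|c|_\mathfrak{p}$, which is exactly $M_\mathfrak{p}(a,b,c)$. The second part is a sign condition. Since $\mathfrak{p}$ is real, $k$ embeds into $\mathbb{R}$, so $|ab|=|c|$ forces $c=ab$ or $c=-ab$. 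We must therefore rule out $c=-ab$ whenever $ab\neq0$.

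By Lemma~\ref{lem:real}, the ordering $\leq_\mathfrak{p}$ is quantifier-freely definable using only $+,1,\cdot_{-1}$ and $L$:
\[
x\leq_\mathfrak{p}y\iff L(y+(-1)x+(-1)1,\;y+(-1)x+1).
\]
In particular $x\geq_\mathfrak{p}0$ is $L(x+(-1)1,x+1)$. We cannot form the product $ab$ directly, so we express its sign through the signs of $a$ and $b$. We have $c=ab$ if and only if $M_\mathfrak{p}(a,b,c)$ holds and one of the following holds:
\begin{itemize}
\item $c=0$;
\item $c>_\mathfrak{p}0$ and $a,b$ have the same strict sign;
\item $c<_\mathfrak{p}0$ and $a,b$ have opposite strict signs.
\end{itemize}
Each condition here is a Boolean combination of formulas $t\geq_\mathfrak{p}0$ and $t=0$, with $t\in\{a,b,c\}$ and $-t$ written using $\cdot_{-1}$. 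So $\mu$ is quantifier-free, and it visibly does not depend on $k$ or on $\mathfrak{p}$, which gives the required uniformity.

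To verify correctness, pass through the embedding $k\to\mathbb{R}$ determined by $\mathfrak{p}$. Suppose $c=ab$. Then $M$ holds, and the sign of $c$ matches the case pattern. Conversely, suppose $\mu(a,b,c)$ holds, so $|c|=|a||b|$. If $c=0$, then $ab=0$. If $c\neq0$, then $a,b\neq0$, and the sign clause forces $\operatorname{sign}(c)=\operatorname{sign}(a)\operatorname{sign}(b)$, hence $c=ab$.

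The only delicate point is the degenerate case $c=0$. There $M(a,b,0)$ alone already forces $ab=0$, so the sign clause should be the disjunct "$c=0$", with no constraint placed on the signs of $a$ and $b$. Apart from that, the argument is routine. The real substance, definability of the order from $L$, is supplied by Lemma~\ref{lem:real}.
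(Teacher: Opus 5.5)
Your proof is correct and follows the paper's route: $M_\mathfrak{p}$ gives $|ab|_\mathfrak{p}=|c|_\mathfrak{p}$, and the order defined quantifier-freely from $L_\mathfrak{p}$ via Lemma~\ref{lem:real} fixes the sign. The only difference is in how the sign condition is encoded. The paper expresses it as $abc\geq_\mathfrak{p}0$, using four disjuncts of non-strict sign conditions on $\pm a,\pm b,\pm c$ with an even number of minus signs, which absorbs the zero case automatically. You instead split on the sign of $c$, using strict signs and a separate $c=0$ clause; this is cosmetic, and nothing in your argument needs $k$ to be global.
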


\begin{proof}
For $a,b,c\in k$ we have that
$ab=c$ if and only if $|ab|_\mathfrak{p}=|c|_\mathfrak{p}$ and $abc\geq_\mathfrak{p}0$. 
By Lemma~\ref{lem:real}, $\varphi(x):=L(x-1,x+1)$ holds if and only if $x\geq_\mathfrak{p} 0$,
so, with $\psi(x,y,z):=\varphi(x)\wedge\varphi(y)\wedge\varphi(z)$, the formula
$$
 M(x,y,z)\wedge (\psi(x,y,z)\vee\psi(-x,-y,z)\vee\psi(-x,y,-z)\vee\psi(x,-y,-z))
$$
defines multiplication.
\end{proof}

\begin{corollary}\label{cor:real_undecidable}
If $\mathfrak{p}$ is a real place of a number field $k$, then ${\rm Th}(k,+,0,1,\cdot_{-1},L_\mathfrak{p},M_\mathfrak{p})$ is undecidable, 
and ${\rm Th}_\exists(k,+,0,1,\cdot_{-1},L_\mathfrak{p},M_\mathfrak{p})$,
${\rm Th}_\exists(k,+,0,1,(\cdot_\lambda)_{\lambda\in k},L_\mathfrak{p},M_\mathfrak{p})$
and ${\rm Th}_\exists(k,+,\cdot)$
are Turing-equivalent.
\end{corollary}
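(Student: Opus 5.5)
The plan is to reduce both claims to Proposition~\ref{prop:real_undecidable}. That proposition gives a quantifier-free formula $\mu(x,y,z)$ in the language $\{+,0,1,\cdot_{-1},L,M\}$ which defines $xy=z$ in $(k,+,0,1,\cdot_{-1},L_\mathfrak{p},M_\mathfrak{p})$. Conversely, $L_\mathfrak{p}$ and $M_\mathfrak{p}$ are existentially definable in $(k,+,\cdot)$. The two directions are then translations between the languages.

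\textbf{Undecidability.} Replace every atomic subformula of a ring sentence by a formula in the linear language. For a term equation, introduce new variables for the subterms, so that $t_1\cdot t_2=t_3$ becomes $\exists w\,(\mu(\ldots)\wedge\ldots)$. This gives a computable map from ring sentences $\varphi$ to sentences $\varphi^*$ with $(k,+,\cdot)\models\varphi$ if and only if $(k,+,0,1,\cdot_{-1},L_\mathfrak{p},M_\mathfrak{p})\models\varphi^*$. Since ${\rm Th}(k,+,\cdot)$ is undecidable for a number field \cite{Robinson}, so is ${\rm Th}(k,+,0,1,\cdot_{-1},L_\mathfrak{p},M_\mathfrak{p})$.

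\textbf{Existential reduction.} The same translation applied to existential sentences stays existential. It suffices to put the formula in prenex form and flatten terms with existentially quantified auxiliary variables, using that $\mu$ is quantifier-free and occurs positively; negated equations $t\neq s$ are handled as $\exists w\,(w=t-s\wedge w\neq 0)$. This reduces ${\rm Th}_\exists(k,+,\cdot)$ to ${\rm Th}_\exists(k,+,0,1,\cdot_{-1},L_\mathfrak{p},M_\mathfrak{p})$. That theory trivially reduces to the existential theory with all $\cdot_\lambda$, since it is a sublanguage.

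\textbf{Closing the loop.} It remains to reduce ${\rm Th}_\exists(k,+,0,1,(\cdot_\lambda)_{\lambda\in k},L_\mathfrak{p},M_\mathfrak{p})$ to ${\rm Th}_\exists(k,+,\cdot)$. Fix a primitive element $\theta$ of $k/\mathbb{Q}$ with minimal polynomial $f$. Each scalar $\lambda$, given as a $\mathbb{Q}$-polynomial in $\theta$, can be named existentially: quantify a variable $t$ with $f(t)=0$ and $t$ in the correct real interval, which selects the embedding corresponding to $\mathfrak{p}$ (see below). Rationals are named by quantifier-free ring terms with denominators cleared.

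Using Lemma~\ref{lem:real}, the relations translate as follows. $L_\mathfrak{p}(x,y)$ is equivalent to $x^2\leq_\mathfrak{p} y^2$. $M_\mathfrak{p}(x,y,z)$ is equivalent to $(xy)^2=z^2$. Nonnegativity $w\geq_\mathfrak{p}0$ must be existentially definable in $(k,+,\cdot)$, and negations $w<_\mathfrak{p}0$ reduce to positivity of $-w$ together with $w\neq0$. Hence atomic formulas and their negations become existential, so existential sentences go to existential sentences, computably.

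\textbf{Main obstacle.} The key step is existential definability of the ordering $\leq_\mathfrak{p}$ in $(k,+,\cdot)$, uniformly enough to be computable. The first approach is Siegel's/Hilbert's theorem: totally positive elements are sums of four squares. A single real place needs more care. One can take an element $\beta\in k$ that is positive at $\mathfrak{p}$ and negative at all other real places, using weak approximation. Then $w\geq_\mathfrak{p}0$ if and only if $w$ is a sum of squares plus $\beta$ times a sum of squares, since the positive cone $\sum k^2+\beta\sum k^2$ has a unique ordering, namely $\leq_\mathfrak{p}$. This is a standard fact about preorders in fields (the preorder generated by $\beta$ is the intersection of the orderings containing $\beta$), and here only $\leq_\mathfrak{p}$ contains $\beta$. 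Sums of squares can be bounded to four by Hilbert--Landau--Siegel. The resulting formula uses finitely many fixed parameters, which are themselves existentially named as above. This also handles selecting the root $t$ of $f$: impose $q_1\leq_\mathfrak{p} t\leq_\mathfrak{p} q_2$ with rationals $q_1,q_2$ isolating the real root.
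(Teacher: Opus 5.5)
Your overall route is the paper's. The quantifier-free definition of multiplication from Proposition~\ref{prop:real_undecidable} gives the reductions from ${\rm Th}(k,+,\cdot)$ and ${\rm Th}_\exists(k,+,\cdot)$. Existential definability of $\leq_\mathfrak{p}$ in $(k,+,\cdot)$ gives the reduction back. For that definability the paper cites \cite[Lemma 4.1]{EM}; you instead prove it directly. You take $\beta$ positive only at $\mathfrak{p}$, observe that the preordering $\sum k^2+\beta\sum k^2$ is the positive cone of $\leq_\mathfrak{p}$, and bound sums of squares by Siegel. That is a correct self-contained substitute.

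The gap is in how you remove the parameters. Write $\sigma_\mathfrak{p}\colon k\to\mathbb{R}$ for the embedding given by $\mathfrak{p}$. Your definition of $\leq_\mathfrak{p}$ needs $\beta$, which you name as $g(t)$ for a root $t$ of $f$. You then select $t=\theta$ by imposing $q_1\leq_\mathfrak{p}t\leq_\mathfrak{p}q_2$, but this already uses $\leq_\mathfrak{p}$, hence $\beta$, hence $t$. Unwound, the condition reads $t-q_1,\ q_2-t\in\sum k^2+g(t)\sum k^2$, and every root $t=\tau(\theta)$ with $\tau\in{\rm Aut}(k)$ satisfies it. Indeed, $\sum k^2+\tau(\beta)\sum k^2$ is the positive cone of the ordering induced by $\sigma_\mathfrak{p}\circ\tau^{-1}$, and that embedding sends $t$ to $\sigma_\mathfrak{p}(\theta)\in[q_1,q_2]$.

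So your claim that the interval ``selects the embedding corresponding to $\mathfrak{p}$'' is false. No parameter-free formula in $(k,+,\cdot)$ can do this once ${\rm Aut}(k)$ is nontrivial; for example, in $k=\mathbb{Q}(\sqrt2)$ the nontrivial automorphism swaps the two orderings. If ${\rm Aut}(k)$ is trivial, $f(t)=0$ alone already forces $t=\theta$. Either way the interval condition does no work.

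The reduction is nevertheless correct, but for a reason you do not give. Let $\Phi(\theta)$ be the translated existential ring sentence, with all scalars and $\beta$ written as $\mathbb{Q}$-polynomials in $\theta$. Then $k\models\Phi(\theta)$ if and only if $k\models\exists t\,(f(t)=0\wedge\Phi(t))$. This holds because every root of $f$ in $k$ has the form $\tau(\theta)$ for a field automorphism $\tau$ of $k=\mathbb{Q}(\theta)$, and $\tau^{-1}$ preserves ring formulas. This automorphism-invariance step is exactly the paper's ``existentially quantify away the constants.'' You need to state it in place of the interval argument.
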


\begin{proof}
As $(k,+,\cdot)$ is quantifier-freely definable in $(k,+,0,1,\cdot_{-1},L_\mathfrak{p},M_\mathfrak{p})$
by Proposition \ref{prop:real_undecidable},
we get Turing reductions from
${\rm Th}(k,+,\cdot)$ to
${\rm Th}(k,+,0,1,\cdot_{-1},L_\mathfrak{p},M_\mathfrak{p})$
and from
${\rm Th}_\exists(k,+,\cdot)$ to
${\rm Th}_\exists(k,+,0,1,\cdot_{-1},L_\mathfrak{p},M_\mathfrak{p})$.
In particular, since ${\rm Th}(k,+,\cdot)$ is undecidable (see Section \ref{sec:intro}), 
${\rm Th}(k,+,0,1,\cdot_{-1},L_\mathfrak{p},M_\mathfrak{p})$ is undecidable.
Finally, 
as $\leq_\mathfrak{p}$ is existentially definable in $(k,+,\cdot)$ \cite[Lemma 4.1]{EM},
also $L_\mathfrak{p}$ and $M_\mathfrak{p}$
are existentially definable in $(k,+,\cdot)$,
and so there is a quantifier-free interpretation of 
$(k,+,0,1,(\cdot_\lambda)_{\lambda\in k},L_\mathfrak{p},M_\mathfrak{p})$
in $(k,+,\cdot)$,
which gives the Turing reduction from
${\rm Th}_\exists(k,+,0,1,(\cdot_\lambda)_{\lambda\in k},L_\mathfrak{p},M_\mathfrak{p})$ to
${\rm Th}_\exists(k,+,\cdot,(\lambda)_{\lambda\in k})$, the existential theory in the language of fields with constant symbols for the elements of $k$,
and we can existentially quantify away the constants.
\end{proof}

\begin{definition}
We let $\mathcal{L}_{k,{\rm ord}}$ be the one-sorted language $\mathcal{L}_{k}\cup\{<\}$
where $<$ is a binary relation symbol.
We let $\mathcal{L}_{k,{\rm inf}}=\mathcal{L}_{k,{\rm fin}}$ be the one-sorted language $\mathcal{L}_{k}\cup\{L\}$,
and $\mathcal{L}_{k,{\rm inf}}^+$
the one-sorted language $\mathcal{L}_{k}\cup\{L,M\}$.
\end{definition}

\begin{definition}\label{def:Tkleq}
For an ordering $\leq'$ on $k$,
$T_{k,\leq'}$ is the $\mathcal{L}_{k,{\rm ord}}$-theory $T_k$ together with the following axioms for an $\mathcal{L}_{k,{\rm ord}}$-structure $(K,+,0,1,(\cdot_\lambda)_{\lambda\in k},\leq)$:
\begin{enumerate}
\item $0<1$
\item $(K,0,+,<)$ is an ordered abelian group
\item $\forall x,y(x\leq y\rightarrow \lambda x\leq\lambda y)$, for all $\lambda\in k$ with $\lambda\geq'0$.
\end{enumerate}
\end{definition}

\begin{theorem}\label{thm:QE_order}
The theory $T_{k,\leq'}$ has quantifier elimination and is complete.
\end{theorem}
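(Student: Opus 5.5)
The plan is to follow the template of Theorem~\ref{thm:QE}: an embedding criterion for quantifier elimination, followed by a minimal-model argument for completeness. First observe that every model $(K,\leq)$ of $T_{k,\leq'}$ is an ordered $k$-vector space. Axiom (3) applied to $\lambda>'0$ and $\lambda^{-1}$ shows that multiplication by $\lambda$ is strictly order-preserving. For $\lambda<'0$ we write $\lambda x=-((-\lambda)x)$, so multiplication by $\lambda$ is order-reversing. Since $0<1$, the unique $\mathcal{L}_k$-embedding $\lambda\mapsto\lambda\cdot 1^K$ of Proposition~\ref{prop:Tk} satisfies $\lambda<'\mu\Rightarrow \lambda 1<\mu 1$. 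Hence $(k,\leq')$ embeds uniquely into every model of $T_{k,\leq'}$, and it is itself a model. Once quantifier elimination is established, completeness therefore follows exactly as in Corollary~\ref{cor:model_complete}.

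For quantifier elimination, let $L,L'\models T_{k,\leq'}$ with a common substructure $K$, a $k$-subspace containing $1$ and carrying the induced order. Take $\alpha\in L\setminus K$ and assume $L'$ is $|K|^+$-saturated; we must embed $K\oplus k\alpha$ into $L'$ over $K$. The key observation is that the order on $K\oplus k\alpha$ is determined by the cut of $\alpha$ over $K$. Indeed, for $\lambda\neq0$ we have $b+\lambda\alpha>0$ iff $\alpha>-\lambda^{-1}b$ when $\lambda>'0$, and iff $\alpha<-\lambda^{-1}b$ when $\lambda<'0$. Both conditions depend only on the cut.

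It therefore suffices to realize in $L'$ the type
\[
p(x)=\{b<x: b\in K,\ b<\alpha\}\cup\{x<c: c\in K,\ \alpha<c\}.
\]
Since $\alpha\notin K$, any realization $\alpha'$ also lies outside $K$. The map $b+\lambda\alpha\mapsto b+\lambda\alpha'$ is then a $k$-linear isomorphism, and it preserves the order by the observation above. Finite satisfiability of $p$ reduces to a single pair $b<\alpha<c$, with one side possibly missing. If both sides are present, take $(b+c)/2\in K$, where $1/2\in k$ exists because an ordered field has characteristic $0. If only $b$ is present, take $b+1$ when $b+1<\alpha$; in general take $b+\epsilon$ for some $\epsilon\in K$ with $0<\epsilon<\alpha-b$. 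Such an $\epsilon$ exists provided the cut is not infinitesimally close to $b$, and this edge case is where care is needed.

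The main obstacle is exactly this: the cut of $\alpha$ over $K$ may have no witness in $K$, and we need an element of $L'$ rather than of $K$ anyway. The fix is to realize the type in $L'$, not in $K$. A finite part $b<x<c$ (with $b<c$ in $K$) is realized in $L'$ by $(b+c)/2$. A one-sided part $b<x$ is realized by $b+1$, and $x<c$ by $c-1$. Requiring in addition $x\neq a$ for finitely many $a\in K$ is handled by density: $L'$ is a dense order because it is a $\mathbb{Q}$-vector space, so infinitely many points lie between $b$ and $c$, for instance $b+t(c-b)$ with $t\in\mathbb{Q}\cap(0,1)$. This gives consistency of $p$ together with $\{x\neq a: a\in K\}$. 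Saturation then yields $\alpha'$, and the standard back-and-forth criterion gives quantifier elimination.
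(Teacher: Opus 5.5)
Your proof is correct, but it does not follow the paper's route. The paper gives no argument of its own for this statement and simply cites van den Dries \cite[Theorem 1]{Dries}, where quantifier elimination is proved for ordered vector spaces over an ordered field.

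You instead give a self-contained back-and-forth proof modelled on Case 1 of Theorem~\ref{thm:QE}:
\begin{itemize}
\item The order on $K\oplus k\alpha$ is determined by the cut of $\alpha$ over $K$, since $b+\lambda\alpha>0$ holds exactly when $\alpha>-\lambda^{-1}b$ (if $\lambda>'0$) or $\alpha<-\lambda^{-1}b$ (if $\lambda<'0$).
\item The cut type is finitely satisfiable in $L'$ by $(b+c)/2$, $b+1$ or $c-1$.
\item Completeness follows because $(\underline{k},\leq')$ is a model that embeds uniquely into every model.
\end{itemize}
The citation buys brevity and places the result in van den Dries's more general framework. Your route makes the paper self-contained and uniform with Section~\ref{sec:QE}, and it shows that the only inputs are the sign rule for scalars and divisibility by $2$ (available because $k$ is ordered, hence of characteristic $0$).

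Two points of presentation should be cleaned up.
\begin{itemize}
\item The \emph{edge case} with $\epsilon$ infinitesimally close to $b$ is a red herring. Finite satisfiability only needs an element of $L'$ satisfying finitely many inequalities $b_i<x<c_j$; it does not need an element in the same cut as $\alpha$. Your final witnesses $(b+c)/2$, $b+1$, $c-1$ already settle this.
\item Adding the conditions $x\neq a$ for $a\in K$ is redundant. As you observe yourself, every $a\in K$ already appears in $p$ as a strict lower or upper bound, so any realization of $p$ lies outside $K$.
\end{itemize}
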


\begin{proof}
See \cite[Theorem 1]{Dries}.
\end{proof}

\begin{lemma}\label{lem:interpretation_order}
\begin{enumerate}[(a)]
    \item Every $\mathcal{L}_{k,{\rm ord}}$-structure $(K,\leq)$ interprets an $\mathcal{L}_{k,{\rm inf}}$-structure $(K,\leq)^L$ by the defining formula 
\begin{eqnarray*}
    L(x,y)_\varphi &:=& y\geq x\geq 0\vee y\geq -x\geq 0\vee -y\geq x\geq 0\vee -y\geq -x\geq 0.
\end{eqnarray*}
In particular, there is a reduction map $\psi\mapsto\psi_\varphi$ from $\mathcal{L}_{k,{\rm inf}}$-formulas to $\mathcal{L}_{k,{\rm ord}}$-formulas such that, for every $(K,\leq)$ and $a\in K^n$, $(K,\leq)^L\models\psi(a)$ if and only if $(K,\leq)\models\psi_\varphi(a)$.
\item Every $\mathcal{L}_{k,{\rm fin}}$-structure $(K,L)$
interprets an $\mathcal{L}_{k,{\rm ord}}$-structure $(K,L)^\leq$ by the defining formula
\begin{eqnarray*}
(x\leq y)_\Psi &:=& L(y-x-1,y-x+1).
\end{eqnarray*}
In particular, there is a reduction map $\varphi\mapsto\varphi_\Psi$ from $\mathcal{L}_{k,{\rm ord}}$-formulas to $\mathcal{L}_{k,{\rm inf}}$-formulas such that, for every $(K,L)$ and $a\in K^n$, $(K,L)^\leq\models\varphi(a)$ if and only if $(K,L)\models\varphi_\Psi(a)$.
\item The reduction maps $\psi\mapsto\psi_\varphi$ and $\varphi\mapsto\varphi_\Psi$ preserve quantifier-free formulas.
\end{enumerate}    
\end{lemma}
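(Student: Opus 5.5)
The proof is by inspection of the defining formulas: each interpretation has trivial domain and identity coordinate map, and every defining formula is quantifier-free.

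\emph{Part (a).} Take $\delta_\varphi(x):=(x=x)$ and $f_\varphi=\mathrm{id}_K$. The symbols of $\mathcal{L}_k$ are translated verbatim, and $L$ is translated by the displayed formula. Any $\mathcal{L}_{k,{\rm ord}}$-structure is then an interpretation of an $\mathcal{L}_{k,{\rm inf}}$-structure, since no admissibility conditions are needed: equality is interpreted as equality and the domain is everything. The reduction map $\psi\mapsto\psi_\varphi$ is then the standard one from \cite[\S5.3]{Hodges_long}. Because the domain is all of $K$ and $f_\varphi$ is the identity, that reduction simply replaces each atomic occurrence $L(t_1,t_2)$ by $L(t_1,t_2)_\varphi$. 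Here $L(t_1,t_2)_\varphi$ means the displayed formula with $\mathcal{L}_k$-terms $t_1,t_2$ substituted, which is legitimate because terms of $\mathcal{L}_k$ are shared by both languages. All other atomic formulas are left unchanged, and the reduction commutes with connectives and quantifiers. The stated equivalence $(K,\leq)^L\models\psi(a)\Leftrightarrow(K,\leq)\models\psi_\varphi(a)$ then follows by a routine induction on $\psi$.

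\emph{Part (b).} This is identical. The atomic formula $t_1\le t_2$ is replaced by $L(t_2-t_1-1,t_2-t_1+1)$; this is an $\mathcal{L}_{k,{\rm inf}}$-formula because $t_2-t_1\pm 1$ is an $\mathcal{L}_k$-term built from $+$, $\cdot_{-1}$ and $1$. Strict $<$ is treated either as $\le\wedge\neq$ or directly as $\neg L(t_1-t_2-1,t_1-t_2+1)$. Again induction on formulas gives the equivalence for every $(K,L)$.

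\emph{Part (c).} This needs the most care, because the general reduction in \cite{Hodges_long} first unnests atomic formulas, introducing existential quantifiers. The obstacle disappears here for two reasons. First, the interpretations have trivial domain and identity coordinate map. Second, the term languages of the two sides coincide ($\mathcal{L}_k$), with only the relation symbols $L$ and $<$ being exchanged. So one may define the reduction directly by substituting terms into the quantifier-free defining formulas, without unnesting. This direct reduction sends atomic formulas to quantifier-free formulas and commutes with Boolean connectives, hence maps quantifier-free formulas to quantifier-free formulas. It also satisfies the equivalences in (a) and (b). Taking this direct version as the reduction map is legitimate, since the reduction map is only determined up to equivalence in all interpreting structures.
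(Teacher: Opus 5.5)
Your proposal is correct and follows the paper's own argument. The paper likewise calls (a) and (b) trivial, and gets (c) from the facts that the defining formulas are quantifier-free and that no new constant or function symbols need interpreting, so no unnesting is needed. One cosmetic point: your two options for translating strict $<$ agree only when the induced relation is linear, so you should fix one (e.g.\ $\le\wedge\neq$); either choice still gives a valid quantifier-free interpretation.
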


\begin{proof}
(a) and (b) are trivial. 
The reduction maps preserve quantifier-free formulas since the defining formulas are quantifier-free and there are no additional constant or function symbols to interpret.
\end{proof}

\begin{definition}
For a real place $\mathfrak{p}$ of $k$,
$T_{k,\mathfrak{p}}$ is the $\mathcal{L}_{k,{\rm inf}}$-theory 
$$
 T_k\cup\{\varphi_\Psi:\varphi\in T_{k,\leq_\mathfrak{p}}\}\cup\{\forall x,y(L(x,y)\leftrightarrow L(x,y)_{\varphi\Psi})\}.
$$ 
\end{definition}

\begin{theorem}\label{thm:real}
For a real place $\mathfrak{p}$ of $k$, $T_{k,\mathfrak{p}}$ has quantifier elimination,
is complete and model complete,
and $(\underline{k},L_\mathfrak{p})\models T_{k,\mathfrak{p}}$ embeds uniquely into every $K\models T_{k,\mathfrak{p}}$, uniformly in $k$.
\end{theorem}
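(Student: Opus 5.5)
The plan is to transfer everything from $T_{k,\leq_\mathfrak{p}}$ (Theorem~\ref{thm:QE_order}) along the interpretations of Lemma~\ref{lem:interpretation_order}, as was done for finite places in Theorem~\ref{thm:finite}.

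The first step is to show that the two interpretations are mutually inverse on the relevant classes. Let $K\models T_{k,\mathfrak{p}}$. By the definition of $T_{k,\mathfrak{p}}$, the structure $(K,L)^\leq$ satisfies $T_{k,\leq_\mathfrak{p}}$, since $K$ satisfies $\varphi_\Psi$ for every axiom $\varphi$ of $T_{k,\leq_\mathfrak{p}}$. The last axiom scheme of $T_{k,\mathfrak{p}}$ says exactly that $((K,L)^\leq)^L=(K,L)$.

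Conversely, let $(K,\leq)\models T_{k,\leq_\mathfrak{p}}$. I will check that the order recovered from $L_\varphi$ via $\Psi$ is $\leq$ itself. That is, $L_\varphi(y-x-1,y-x+1)$ should hold if and only if $x\leq y$. Setting $z=y-x$, this amounts to showing that in an ordered $k$-vector space, $|z-1|\le|z+1|$ holds if and only if $z\geq0$, where $|w|$ means $\max(w,-w)$.

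This is elementary. If $z\geq0$, then $z+1>0$, and both $z-1$ and $1-z$ are at most $z+1$ by the ordered-group axioms and $0<1$. If $z<0$, then $|z-1|=1-z>|z+1|$, by checking the two cases $z+1\geq0$ and $z+1<0$. This is the one genuinely computational point, and I expect it to be the main (though minor) obstacle. One must also confirm that $L_\varphi$ coincides with the relation $|x|\le|y|$ in any ordered group, which is a direct case check.

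It then follows that $((K,\leq)^L)^\leq=(K,\leq)$ and that $(K,\leq)^L\models T_{k,\mathfrak{p}}$. So $K\mapsto (K,L)^\leq$ is a bijection between models of $T_{k,\mathfrak{p}}$ and models of $T_{k,\leq_\mathfrak{p}}$ with the same underlying vector space, and its inverse is $(K,\leq)\mapsto(K,\leq)^L$.

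For quantifier elimination, take an $\mathcal{L}_{k,{\rm inf}}$-formula $\psi$. Theorem~\ref{thm:QE_order} gives a quantifier-free $\theta$ equivalent to $\psi_\varphi$ modulo $T_{k,\leq_\mathfrak{p}}$. Then $\theta_\Psi$ is quantifier-free by Lemma~\ref{lem:interpretation_order}(c), and it is equivalent to $\psi$ in every model of $T_{k,\mathfrak{p}}$, because $K\models\theta_\Psi(a)$ iff $(K,L)^\leq\models\theta(a)$ iff $(K,L)^\leq\models\psi_\varphi(a)$ iff $((K,L)^\leq)^L=K\models\psi(a)$.

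Completeness transfers in the same way. A sentence $\psi$ holds in $K$ iff $\psi_\varphi$ holds in $(K,L)^\leq$, and $T_{k,\leq_\mathfrak{p}}$ is complete. Model completeness follows from quantifier elimination.

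For the last part, $(k,\leq_\mathfrak{p})\models T_{k,\leq_\mathfrak{p}}$ is clear. By Lemma~\ref{lem:real}, $(k,\leq_\mathfrak{p})^L=(\underline{k},L_\mathfrak{p})$, so $(\underline{k},L_\mathfrak{p})\models T_{k,\mathfrak{p}}$.

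For embeddings, Proposition~\ref{prop:Tk} gives a unique $\mathcal{L}_k$-embedding $f\colon\underline{k}\to K$, using $0<1$ to get $0^K\ne 1^K$. It remains to see that $f$ preserves $L$. Since $L$ is quantifier-freely defined from $\leq$ by $L_\varphi$, it suffices that $f$ preserves the order. If $\lambda\geq_\mathfrak{p}0$, then axiom (3) applied to $0\le 1$ gives $0\le\lambda\cdot1^K$. So $f$ is order-preserving, injective, and hence an $\mathcal{L}_{k,{\rm inf}}$-embedding. None of the arguments depend on $k$ beyond the formulas displayed, which gives the uniformity in $k$.
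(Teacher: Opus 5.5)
Your proof is correct and follows essentially the paper's route. Like the paper, you transfer quantifier elimination (and completeness) from $T_{k,\leq_\mathfrak{p}}$ through the interpretations using the axiom $((K,L)^\leq)^L=(K,L)$, obtain $(\underline{k},L_\mathfrak{p})\models T_{k,\mathfrak{p}}$ from Lemma~\ref{lem:real}, and get the unique embedding from Proposition~\ref{prop:Tk} together with $\lambda\cdot 1^K\geq 0$ for $\lambda\geq_\mathfrak{p}0$. Your extra check that $((K,\leq)^L)^\leq=(K,\leq)$ holds for every model of $T_{k,\leq_\mathfrak{p}}$ is valid, but the argument only needs it for $K=k$, where it is the first equivalence of Lemma~\ref{lem:real}.
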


\begin{proof}
If $(K,L)\models T_{k,\mathfrak{p}}$, then by definition $(K,L)^\leq\models T_{k,\leq_\mathfrak{p}}$,
and $T_{k,\leq_\mathfrak{p}}$ has quantifier elimination (Theorem \ref{thm:QE_order}). 
So for a $\mathcal{L}_{k,{\rm inf}}$-formula $\psi(x)$,
$\psi_\varphi$ is equivalent modulo $T_{k,\leq_\mathfrak{p}}$ to a quantifier-free formula $\varphi$, and $\varphi_\Psi$ is again quantifier-free (Lemma \ref{lem:interpretation_order}(c)).
Since $(K,L)\models\forall x,y(L(x,y)\leftrightarrow L(x,y)_{\varphi\Psi})$, ${\rm id}_K$ is a homotopy between $\Psi\circ\varphi$ and the identity, and therefore
$\varphi_\Psi$ is equivalent in $(K,L)$ to $\psi$.
We have that $(\underline{k},\mathfrak{p})\models T_{k,\mathfrak{p}}$ since
$(\underline{k},\leq_\mathfrak{p})=(\underline{k},\mathfrak{p})^\leq$ obviously satisfies $T_{k,\leq_\mathfrak{p}}$,
and $(k,\mathfrak{p})\models \forall x,y(L(x,y)\leftrightarrow L(x,y)_{\varphi\Psi})$ by Lemma \ref{lem:real}.
By Proposition \ref{prop:Tk} there is a unique $\mathcal{L}_k$-embedding of $\underline{k}$ into every $(K,L)\models T_{k,\mathfrak{p}}$,
and this embedding is compatible with the ordering by (2) of Definition \ref{def:Tkleq} (namely if $\lambda\geq 0$, then $\lambda\cdot 1\geq\lambda\cdot 0=0$), and therefore also with $L$.
\end{proof}

\subsection{Complex absolute values}

We now discuss complex absolute values.
Recall that an (archimedean) absolute value $|.|$ on $k$ is {\em complex} if the completion $\hat{k}$ of $k$ with respect to $|.|$ is isomorphic to $\mathbb{C}$.
In this case, we get an embedding of $k$ into $\mathbb{C}$ such that $|.|$ becomes the restriction of the usual complex absolute value to $k$.
We denote by $\infty$ the place corresponding to the usual absolute value on $\mathbb{C}$, 
as well as on any subfield of $\mathbb{C}$,
and we also just write $|.|$ for the complex absolute value.

\begin{proposition}\label{prop:complex}
For a subfield $k\subseteq\mathbb{C}$ with $k\not\subseteq\mathbb{R}$,
the subfield $(k\cap\mathbb{R},+,\cdot)$ is definable in $(k,+,{1},L_\infty)$.
\end{proposition}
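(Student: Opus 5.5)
The plan is to use only equalities and inequalities between distances $|x-y|$, which $L_\infty$ expresses as $L(x-y,z-w)$ and $\Lambda$ in the language with $+$. Since $k\not\subseteq\mathbb{R}$, $k$ contains some $\alpha\notin\mathbb{R}$, so $\mathbb{Q}+\mathbb{Q}\alpha\subseteq k$ is dense in $\mathbb{C}$. Hence every nonempty open subset of $\mathbb{C}$ contains a point of $k$. This density fact is what makes each geometric condition below first-order expressible over $k$. We also use that $x\mapsto nx$ is $\emptyset$-definable from $+$, so $x=\tfrac{m}{n}y$ is definable, and in particular $\frac12(a+b)$ is.

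\emph{Step 1: defining $k\cap\mathbb{R}$.} For $x\in k$, consider the circles $|z|=|x|$ and $|z-1|=|x-1|$, which both pass through $x$. If $x\notin\mathbb{R}$, they cross transversally at $x$ and $\bar x$. Then each of the four open regions $\{|z|\,\square_1\,|x|,\ |z-1|\,\square_2\,|x-1|\}$ with $\square_i\in\{<,>\}$ is nonempty and open, so it contains a point of $k$. If $x\in\mathbb{R}$, the circles are tangent at $x$, or one is degenerate when $x\in\{0,1\}$, and then one of these four regions is empty. The cases are: $\{<,<\}$ is empty if $0\le x\le1$, $\{>,<\}$ is empty if $x>1$, and $\{<,>\}$ is empty if $x<0$. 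Writing $<$ as $\neg L$, the formula
$$\rho(x):=\bigvee_{\square_1,\square_2}\neg\exists z\,\bigl(|z|\,\square_1\,|x|\wedge|z-1|\,\square_2\,|x-1|\bigr)$$
therefore defines $k\cap\mathbb{R}$. Addition on $k\cap\mathbb{R}$ is the restriction of $+$. The ordering of $k\cap\mathbb{R}$ is defined by $L(y-x-1,y-x+1)$, exactly as in Lemma~\ref{lem:real}, since that lemma only uses the usual absolute value on $\mathbb{R}$.

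\emph{Step 2: defining multiplication on $k\cap\mathbb{R}$ by power of a point.} First treat $a<0<b$ and $c<0$ in $k\cap\mathbb{R}$. Let $C_1$ be the circle with real centre $\frac{a+b}{2}$ through $a$ and $b$, and $C_2$ the circle with centre $\frac{c+1}{2}$ through $c$ and $1$. Membership of $z$ in the open disc, the circle, or the exterior is expressed by comparing $|2z-(a+b)|$ with $|b-a|$, and likewise for $C_2$. The power of $0$ is $ab$ with respect to $C_1$ and $c$ with respect to $C_2$. The radical axis of two non-concentric circles with real centres is a vertical line, and it equals $\{\mathrm{Re}\,z=0\}=\{z:|z-1|=|z+1|\}$ iff $0$ has equal power, i.e.\ iff $ab=c$. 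The two open "lune" regions $D_1\setminus\overline{D_2}$ and $D_2\setminus\overline{D_1}$ lie on opposite sides of the radical axis. So I would define $ab=c$ by requiring that every $z\in k$ in the first lune satisfies $|z-1|\le|z+1|$, every $z\in k$ in the second satisfies $|z+1|\le|z-1|$, or the same with the two sides swapped. Concentric and nested configurations must be excluded, and a degenerate case where a lune is empty must be handled; both should be automatic or fixable by extra conditions.

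The key verification uses density. If $ab\neq c$, the radical axis is $\mathrm{Re}\,z=r\neq0$, and a lune has a nonempty open part on the wrong side of $\mathrm{Re}\,z=0$. Such a part contains a point of $k$, so the formula fails. If $ab=c$, the formula holds by the geometry above.

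\emph{Step 3: extending multiplication to all of $k\cap\mathbb{R}$.} I would extend to all signs by the case distinction $ab=c\iff(-a)b=-c\iff a(-b)=-c$, together with the definable cases in which $a$, $b$ or $c$ is $0$, and then take a finite disjunction over the sign pattern. I expect the main obstacle to be Step~2: checking carefully that the lune criterion, whose quantifiers range only over $k$, is exactly equivalent to $ab=c$ in every configuration. Density handles the open-set issues, but tangency, nesting and empty lunes need case analysis, and a different choice of auxiliary circle, e.g.\ through $c$ and $-1$, may be needed to keep both circles containing $0$ and non-nested.
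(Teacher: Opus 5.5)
Your argument is correct, and it takes a genuinely different route from the paper.

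The paper first shows that the universal formula $\forall z\,(L(z,y)\rightarrow L(x+z,x+y))$ defines the ray relation $y\in\mathbb{R}_{\geq0}x$. This uses the equality case of the triangle inequality and the density of $\mathbb{Q}x$ in $\mathbb{R}x$. It then obtains $k\cap\mathbb{R}$ as the set of $z$ such that $z$ or $-z$ lies on the ray of $1$. Finally it defines $ab=c$ for $a,b,c\geq0$ by approximate similar triangles: for every non-real direction $z$ and every $\epsilon>0$ there are $x,y\in\mathbb{R}_{\geq0}z$ with $b+y\in\mathbb{R}_{\geq0}(1+x)$, $|x|$ within $\epsilon$ of $a$ and $|y|$ within $\epsilon$ of $c$. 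This step needs a limiting argument.

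You instead detect realness through tangency of the circles $|z|=|x|$ and $|z-1|=|x-1|$. You then characterize $ab=c$ exactly, via the power of $0$ with respect to the circles on the diameters $[a,b]$ and $[c,1]$. Density is only used to find points of $k$ in nonempty open sets, and no $\epsilon$-approximation is needed. Both $k\cap\mathbb{R}$ and the graph of multiplication come out universally definable, while the paper's multiplication formula involves quantifier alternations. What the paper's route buys is the ray relation itself, which it reuses in the appendix to define the whole field $(k,+,\cdot)$.

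The configurations you leave open in Step 2 need no change of auxiliary circles, but you should write the check out. Since your criterion is a disjunction of two versions, for $ab\neq c$ you must refute both. Your sentence about a lune on the ``wrong side'' glosses over this. The cases are as follows.

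\emph{Setup.} As $a<0<b$ and $c<0$, the point $0$ lies in both open discs.

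\emph{Case $ab=c$.} Both circles meet $\mathrm{Re}\,z=0$ exactly in $\pm i\sqrt{-ab}$. So they either cross there, or they coincide (precisely when $b=1$ and $c=a$). If they coincide, both lunes are empty and the formula holds vacuously.

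\emph{Case $ab\neq c$, discs nested.} This includes the concentric and internally tangent cases. The only nonempty lune contains nonempty open subsets of both half-planes $\mathrm{Re}\,z<0$ and $\mathrm{Re}\,z>0$. These lie near the two ends $\min(a,c)$ and $\max(b,1)$ of the larger diameter; at a point of tangency, use that the radii differ. So both versions fail.

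\emph{Case $ab\neq c$, circles cross.} Say the radical axis is $\mathrm{Re}\,z=u_0>0$. One lune lies in $\mathrm{Re}\,z>u_0$, which kills the version requiring it to lie in $\mathrm{Re}\,z\leq 0$. The other lune has the intersection points $u_0\pm iv_0$ in its closure. Hence it contains a nonempty open set with $0<\mathrm{Re}\,z<u_0$, which kills the other version.
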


\begin{proof}
First we claim that the $\mathcal{L}_{k,{\rm inf}}$-formula
$$
 \varphi(x,y) = \forall z(L(z,y)\rightarrow L(x+z,x+y))
$$
defines in $k$ the binary relation $y\in\mathbb{R}_{\geq0}x$.
Indeed, if $y\in\mathbb{R}_{\geq0}x$, then $|x+y|=|x|+|y|$, and every $z\neq y$ with $|z|\leq|y|$ either has $|z|<|y|$, so $|x+z|\leq|x|+|z|<|x|+|y|$, or $z\notin\mathbb{R}_{\geq0}x$, and so $|x+z|<|x|+|z|\leq|x|+|y|$.
On the other hand, if $y\notin\mathbb{R}_{\geq0}x$, then $|x+y|<|x|+|y|$, so since $\mathbb{Q}$ is dense in $k\cap\mathbb{R}$ 
and therefore $\mathbb{Q}x\subseteq k$ is dense in $\mathbb{R}x$,
there exists $z\in k\cap\mathbb{R}x$ with $|x+y|-|x|<|z|<|y|$,
hence $|z|\leq|y|$ but $|x+z|=|x|+|z|>|x+y|$.

In particular, $\varphi(1,z)\vee\varphi(1,-z)$ defines $k\cap\mathbb{R}$.
Next we claim that the $\mathcal{L}_{k,{\rm inf}}$-formula $\psi(a,b,c)$ given by
\begin{eqnarray*}
      &\forall z\forall \epsilon (&(\neg\varphi(1,z)\wedge\neg\varphi(1,-z)\wedge\varphi(1,\epsilon)\wedge \epsilon\neq 0)\rightarrow \\&&\quad\exists x,y(\varphi(z,x)\wedge\varphi(z,y)\wedge\varphi(1+x,b+y)
      \wedge L(a-\epsilon,x,a+\epsilon)\wedge L(c-\epsilon,y,c+\epsilon))\quad)
\end{eqnarray*}
holds in $k$ for $a,b,c\in k\cap\mathbb{R}_{\geq0}$ if and only if $ab=c$. 
Namely, $k\models\psi(a,b,c)$ if and only if for every $z\in k\setminus\mathbb{R}$ and every $\epsilon>0$, there are similar triangles with vertices $(0,1,1+x)$ and $(0,b,b+y)$ with the side between the second and the third vertex parallel to $\mathbb{R}z$ 
and of length $|x|$ respectively $|y|$ (equal to $b|x|$ by similarity), differing from $a$ respectively $c$ by at most $\epsilon$. 
Since $k$ is dense in $\mathbb{C}$, 
and $(\mathbb{C},|.|)$ is complete,
this holds if and only if $c=ab$.
\begin{center}
    \begin{tikzpicture}[scale=1.2,xslant=-0.1]
    \pgfmathsetmacro{\x}{2.3}
    
        \filldraw (0,0) circle (1pt);
        \draw (0,0) node [anchor=north]{$0$};
        \filldraw (1,0) circle (1pt);
        \draw (1,0) node [anchor=north]{$1$};
        \filldraw (3,0) circle (1pt);
        \draw (3,0) node [anchor=north]{$b$};
        
        \filldraw[color=gray] (1,1) circle (0.8pt);
        \draw (1,1) node [anchor=south east]{$1+x$};
        \filldraw[color=gray] (3,3) circle (0.8pt);
        \draw (3,3) node [anchor=south east]{$b+y$};

        \filldraw (1,\x/3) circle (1pt);
        
        \filldraw (3,\x) circle (1pt);

        \draw[dotted] (1,0)--(1,1);
        \draw[dotted] (0,0)--(3,3)--(3,0);
        
        \draw (0,0)--(3,0)--(3,\x)--(0,0);
        \draw (1,0)--(1,\x/3);
        \draw[->] (0,0)--(0,1);
        \draw (0,1) node [anchor=east]{$x$};
        \draw[->] (0,0)--(0,0.6);
        \draw (0,0.6) node [anchor=east]{$z$};
        \draw[->] (0,0)--(0,3);
        \draw (0,3) node [anchor=east]{$y$};
        
        \draw[<->] (3.1,0)--(3.1,\x);
        
        \draw (3.1,\x/2) node [anchor=west]{$ab$};
        \draw[<->] (1.1,0)--(1.1,\x/3);
        \draw (1.1,\x/6) node [anchor=west]{$a$};
        \draw[<->, color=gray] (3.1,\x)--(3.1,3);
        \draw (3.1,1.5+\x/2) node [anchor=west]{$\epsilon$};
    \end{tikzpicture}
\end{center}
From this, one immediately defines multiplication for all $a,b,c\in k\cap\mathbb{R}$ using a case distinction on which of the three elements lie in $k\cap\mathbb{R}_{\geq0}$ (cf.~proof of Proposition \ref{prop:real_undecidable}).
\end{proof}

\begin{corollary}\label{cor:complex_undecidable}
Let $\mathfrak{p}$ be a complex place of a number field $k$.
Then ${\rm Th}(k,+,L_\mathfrak{p})$ is undecidable.
\end{corollary}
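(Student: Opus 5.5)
The idea is to reduce to the undecidability of ${\rm Th}(k\cap\mathbb{R},+,\cdot)$ via Proposition~\ref{prop:complex}. Since $\mathfrak{p}$ is complex, we view $k\subseteq\hat{k}_\mathfrak{p}=\mathbb{C}$ so that $|.|_\mathfrak{p}$ becomes $|.|$ and $L_\mathfrak{p}=L_\infty$. Moreover $k\not\subseteq\mathbb{R}$, since otherwise the completion would be $\mathbb{R}$.

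\textbf{Step 1: define $1$ and the subfield.} Proposition~\ref{prop:complex} is stated for $(k,+,1,L_\infty)$, whereas the corollary concerns $(k,+,L_\mathfrak{p})$ without the constant $1$. We handle this by quantifying over a parameter. Let $\varphi(x,y)$ be the formula from the proof of Proposition~\ref{prop:complex}, and let $\theta(e)$ be the formulas with $1$ replaced by a variable $e$. For any $e\in k^\times$, the same proof defines $k\cap\mathbb{R}e$ as the set of points on the real line through $0$ and $e$. It also defines multiplication on $k\cap\mathbb{R}e$ with unit $e$, i.e.\ $(ae)(be)=abe$, which is isomorphic via $a\mapsto ae$ to $(k\cap\mathbb{R},+,\cdot)$.

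For this we need density of $\mathbb{Q}e$ in $\mathbb{R}e$, which holds. We also need that the "non-real" $z$ (i.e.\ $z\notin\mathbb{R}e$) together with $\epsilon$ work as before. Both hold because the geometric argument is invariant under the similarity $w\mapsto we$ of $\mathbb{C}$, which preserves $k$ when $e\in k$.

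\textbf{Step 2: reduce the sentences.} Given a sentence $\sigma$ in the language of rings, write $\sigma^{(e)}$ for its relativization to the definable set $k\cap\mathbb{R}e$, with multiplication given by the defined ternary relation. Set
\[
\sigma^*:=\exists e\,(e\neq 0\wedge\sigma^{(e)}),
\]
where $0$ is definable, e.g.\ as the unique $x$ with $x+x=x$. Then $k\models\sigma^*$ if and only if $(k\cap\mathbb{R},+,\cdot)\models\sigma$, because every choice of $e$ yields an isomorphic copy.

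Thus ${\rm Th}(k\cap\mathbb{R},+,\cdot)$ many-one reduces to ${\rm Th}(k,+,L_\mathfrak{p})$.

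\textbf{Step 3: conclude.} It remains to know that $k\cap\mathbb{R}$ has undecidable theory. Since $k\not\subseteq\mathbb{R}$, the field $k_0=k\cap\mathbb{R}$ is a subfield of $k$ with $[k:k_0]\ge 1$, in fact $[k:k_0]=2$ ($k$ is stable under complex conjugation? not necessarily). In any case $k_0\supseteq\mathbb{Q}$ has finite degree over $\mathbb{Q}$, so $k_0$ is a number field. By \cite{Robinson}, the theory of any number field is undecidable, which finishes the argument.

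\textbf{Main obstacle.} The expected difficulty is Step 1: checking carefully that the formulas of Proposition~\ref{prop:complex} relativize correctly with a parameter $e$ in place of $1$. If this fails, the fallback is to note that $1$ is definable up to the automorphism-like symmetry anyway, so quantifying over all admissible $e$ suffices.
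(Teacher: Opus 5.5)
Your proposal is correct and follows essentially the paper's route: define $(k\cap\mathbb{R},+,\cdot)$ via Proposition~\ref{prop:complex}, use that $k\cap\mathbb{R}$ is a number field and so has undecidable theory, and remove the constant $1$ by existentially quantifying over $e\neq 0$, which is justified, exactly as in the paper, by the fact that $x\mapsto ex$ is an automorphism of $(k,+,L_\mathfrak{p})$ sending $1$ to $e$. Your parenthetical claim $[k:k_0]=2$ is false in general (for example, $[k:k_0]=3$ for a non-real cubic field, where $k\cap\mathbb{R}=\mathbb{Q}$), but it does no harm, since you only use that $[k_0:\mathbb{Q}]$ is finite.
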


\begin{proof}
Use $|.|_\mathfrak{p}$ to embed $k$ into $\mathbb{C}$, and let $k_0:=k\cap\mathbb{R}$. Proposition \ref{prop:complex} then gives that $(k_0,+,\cdot)$ is definable in $(k,+,1,L_\mathfrak{p})$. 
As $k_0$ is a number field, 
${\rm Th}(k_0,+,\cdot)$ is undecidable,
hence ${\rm Th}(k,+,1,L_\mathfrak{p})$ is undecidable,
and since for every $0\neq a\in k$ the map $x\mapsto ax$ is an automorphism of the structure $(k,+,L_\mathfrak{p})$ that maps $1$ to $a$, we can existentially quantify away the constant~$1$.
\end{proof}

We given a strengthening of Proposition \ref{prop:complex} in Appendix \ref{sec:appendix}.
As the full theory is undecidable,
we will now discuss various existential theories.

\begin{lemma}\label{lem:complex_strict}
Let $k\subseteq\mathbb{C}$ be a number field not contained in $\mathbb{R}$.
Then the positive existential theory of $(k,+,1,(\cdot_\lambda)_{\lambda\in k},L')$, where $L'(x,y)\Leftrightarrow|x|<|y|$, is decidable.
\end{lemma}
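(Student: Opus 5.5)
Positive existential sentences in $(k,+,1,(\cdot_\lambda)_{\lambda\in k},L')$ are, after putting the matrix in disjunctive normal form, disjunctions of sentences of the form
$$\exists x_1,\dots,x_n\Big(\bigwedge_i t_i(x)=t_i'(x)\wedge\bigwedge_j |s_j(x)|<|s_j'(x)|\Big),$$
where the $t_i,t_i',s_j,s_j'$ are $k$-affine terms (linear combinations of the $x_l$ and $1$ with coefficients in $k$). It therefore suffices to decide such conjunctions. The plan is to move the problem from $k$ to $\mathbb{C}$. The equations define an affine $k$-subspace $A\subseteq k^n$, which can be computed effectively by Gaussian elimination over the number field $k$. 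We may parametrize it as $x=x_0+By$ with $y\in k^m$, where $x_0\in k^n$ and $B$ is a matrix over $k$. Substituting turns the strict inequalities into conditions $|\ell_j(y)|<|\ell_j'(y)|$ with $k$-affine forms $\ell_j,\ell_j'$ in $y\in k^m$.

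The key observation is that the set $U=\{y\in\mathbb{C}^m: |\ell_j(y)|<|\ell'_j(y)|\ \forall j\}$ is open in $\mathbb{C}^m$, because it is defined by strict inequalities between continuous functions. Moreover, $k^m$ is dense in $\mathbb{C}^m$, since $k\not\subseteq\mathbb{R}$ makes $k$ dense in $\mathbb{C}$. Hence $U\cap k^m\neq\emptyset$ if and only if $U\neq\emptyset$. Positivity is essential here: there are no negated equations, and there are no non-strict inequalities, which would define closed sets whose rational points may be missed. So the original sentence holds in $k$ if and only if the same formula, with the coefficients read as complex numbers via the given embedding, holds in $\mathbb{C}$.

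Deciding the latter is done by identifying $\mathbb{C}^m=\mathbb{R}^{2m}$ and writing each condition $|\ell_j|^2<|\ell'_j|^2$ as a strict polynomial inequality with real coefficients. These coefficients are the real and imaginary parts of elements of $k$, hence real algebraic numbers, and they are computable as such from a presentation of $k\subseteq\mathbb{C}$ (a defining polynomial together with an isolating region for the generator). The resulting sentence is an existential sentence over the real closed field $\mathbb{R}$ with real-algebraic parameters. The parameters can be replaced by existentially quantified variables pinned down by their minimal polynomials and rational isolating intervals, and the resulting sentence can then be decided by Tarski's quantifier elimination for real closed fields.

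The main obstacle I expect is the bookkeeping of effectivity. One has to make sure that the embedding $k\hookrightarrow\mathbb{C}$ is given computably, so that the real and imaginary parts of the coefficients are effectively describable algebraic numbers. One must also check the degenerate cases: if $A$ is empty the conjunction is simply false, and if $m=0$ it reduces to checking finitely many inequalities $|a|<|b|$ between elements of $k$, which is again a real-algebraic computation. The density/openness step is straightforward once the equations have been eliminated by parametrization.
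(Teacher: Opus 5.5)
Your proposal is correct and follows essentially the same route as the paper: reduce to a $k$-linear system plus strict inequalities, parametrize the solution space, use that the strict-inequality region is open and $k$ is dense in $\mathbb{C}$ to pass from $k$ to $\mathbb{C}$, and then decide over $(\mathbb{R},+,\cdot)$. The only differences are cosmetic. You keep the inequalities between affine terms instead of introducing new variables. You are also more explicit than the paper about the effectivity of handling the real-algebraic coefficients.
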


\begin{proof}
Let $\varphi(z_1,\dots,z_n)$ be a positive quantifier-free formula in the language $(+,1,(\cdot_\lambda)_{\lambda\in k},L')$.
We have to decide whether it is satisfied in $\mathbb{C}$ by some $x\in k^n$.
By case distinctions and introducing new variables,
we can assume without loss of generality that
there are $A\in k^{m\times n}$, $b\in k^m$ and $C\subseteq\{1,\dots,n\}^2$ such that $x\in\mathbb{C}^n$ satisfies $\varphi$ if and only if 
$Ax=b$ and $|x_i|<|x_j|$ for every $(i,j)\in C$.
Note that $U=\{x\in\mathbb{C}^n:|x_i|<|x_j|\mbox{ for }(i,j)\in C\}$
is an open subset of $\mathbb{C}^n$.

Since $k$ is computable, 
we can compute $y_0,\dots,y_d\in k^m$
such that the solution space of $Ax=b$ is $Y:=y_0+\sum_{j=1}^d\mathbb{C}y_j$, and this gives an affine $\mathbb{C}$-linear isomorphism
$f\colon \mathbb{C}^d\rightarrow Y$, $x\mapsto y_0+\sum_jx_jy_j$.
Note that $f(k^d)=Y\cap k^n$,
so $\varphi$ is satisfiable by an element of $k^n$ if and only if 
$f^{-1}(U)\cap k^d\neq\emptyset$.
Since $f^{-1}(U)$ is again open 
and $k$ is dense in $\mathbb{C}$,
this is equivalent to $f^{-1}(U)\neq\emptyset$,
equivalently $U\cap Y\neq\emptyset$.

That is, $\varphi$ is satisfiable by an element of $k^n$ if and only it is satisfiable in $\mathbb{C}$,
and since 
$(\mathbb{C},+,(\cdot_\lambda)_{\lambda\in k},L')$
is interpretable in the decidable field
$(\mathbb{R},+,\cdot)$,
this can be decided.
\end{proof}

\begin{proposition}\label{prop:complex_existential_decidable}
Let $k\subseteq\mathbb{C}$ be a number field.
If $k\cap \mathbb{R}i=\{0\}$, then the existential theory of
$(k,+,1,(\cdot_\lambda)_{\lambda\in k},L_\infty)$ is decidable.
\end{proposition}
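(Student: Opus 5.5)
The plan is to reduce the existential theory of $(k,+,1,(\cdot_\lambda)_{\lambda\in k},L_\infty)$ to the positive existential theory of $(k,+,1,(\cdot_\lambda)_{\lambda\in k},L')$, with $L'(x,y)\Leftrightarrow|x|<|y|$, and then apply Lemma~\ref{lem:complex_strict}. If $k\subseteq\mathbb{R}$, I would instead use the decidability of the real case (Theorem~\ref{thm:real}), so assume $k\not\subseteq\mathbb{R}$. The hypothesis $k\cap\mathbb{R}i=\{0\}$ enters through one arithmetic observation: equality of absolute values in $k$ is trivial.

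\textbf{Key claim:} for $x,y\in k$, $|x|=|y|$ holds if and only if $x=y$ or $x=-y$. For the nontrivial direction, suppose $y\neq0$ and put $u=x/y\in k$, so that $u\bar u=1$. Then $\bar u=u^{-1}\in k$, hence $u\in k\cap\bar k$. The field $k\cap\bar k$ is stable under complex conjugation. If it were not contained in $\mathbb{R}$, it would contain some $a$ with $a-\bar a\neq0$, and $a-\bar a\in\mathbb{R}i\cap k$, contradicting the hypothesis. So $u$ is real with $|u|=1$, i.e.\ $u=\pm1$. The case $y=0$ is immediate.

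With the claim in hand, take an existential sentence, bring its matrix into disjunctive normal form, and treat each literal as follows:
\begin{itemize}
\item $L(s,t)$ becomes $L'(s,t)\vee s=t\vee s=-t$, by the claim;
\item $\neg L(s,t)$ becomes $L'(t,s)$;
\item $s\neq t$ becomes $L'(0,s-t)$.
\end{itemize}
Here $s,t$ are $\mathcal{L}_k$-terms, i.e.\ affine $k$-linear expressions in the variables. These rewritings are valid in $k$ and effective, and they produce a positive existential sentence in the language of Lemma~\ref{lem:complex_strict}. Its truth in $k$ is decidable by that lemma.

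The main point to get right is the key claim, since that is exactly where $k\cap\mathbb{R}i=\{0\}$ is needed. Without it, conditions of the form $|s|=|t|$ define thin real quadrics whose $k$-points cannot be detected by the density argument of Lemma~\ref{lem:complex_strict}. Everything else is routine syntactic bookkeeping.
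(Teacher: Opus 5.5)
Your proof is correct and follows the paper's route: use $k\cap\mathbb{R}i=\{0\}$ to show that $|x|=|y|$ forces $x=\pm y$ in $k$, turn every literal into a positive formula in $L'$ and equality, and apply Lemma~\ref{lem:complex_strict}. Two small points in your favour: replacing $s\neq t$ by $L'(0,s-t)$ is cleaner than the paper's rewriting $x=-y\vee|x|<|y|\vee|y|<|x|$, which is wrong at $x=y=0$; and your separate treatment of $k\subseteq\mathbb{R}$ covers a case where the density argument of Lemma~\ref{lem:complex_strict} is not available.
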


\begin{proof}
Note that $|.|$ induces an embedding $k^\times/\{\pm1\}\rightarrow\mathbb{R}^\times$. 
Indeed, if $|z|=|z'|$ and $z\neq z'$, then
$\frac{z+z'}{z-z'}=\frac{|z|^2-|z'|^2+\bar{z}{z}'-{z}\bar{z}'}{|z-z'|^2}=\frac{2i{\rm Im}(z\bar{z}')}{|z-z'|^2}\in k\cap \mathbb{R}i=\{0\}$, hence $z+z'=0$.

Now let $\varphi(z_1,\dots,z_n)$ be a quantifier-free $\mathcal{L}_{k,{\rm inf}}$-formula.
Without loss of generality, all negations in $\varphi$ are in front of atomic formulas.
Using that by the previous paragraph, 
\begin{eqnarray*}
    |x|\leq|y| &\Leftrightarrow& x=y\vee x=-y\vee|x|<|y|\\
   \neg(|x|\leq|y|) &\Leftrightarrow& |y|<|x|\\
   \neg(x=y) &\Leftrightarrow& x=-y\vee|x|<|y|\vee |y|<|x|
\end{eqnarray*}
for all $x,y\in k$,
we see that we can compute from $\varphi$ a positive quantifier-free formula $\varphi'$
in the language $(+,1,(\cdot_\lambda)_{\lambda\in k},L')$ 
that is equivalent to $\varphi$ in
$(k,+,1,(\cdot_\lambda)_{\lambda\in k},L,L')$,
where $L'$ is interpreted as in Lemma \ref{lem:complex_strict},
which also allows us to decide whether $\varphi'$ is satisfiable.
\end{proof}

\begin{proposition}\label{prop:k0exdef}
Let $k\subseteq\mathbb{C}$ be a subfield and let $k_0=k\cap\mathbb{R}$.
If $k\cap \mathbb{R}i\neq\{0\}$, then the field
$(k_0,+,\cdot)$ is existentially definable in
$(k,+,1,(\cdot_\lambda)_{\lambda\in k_0},L_\infty)$.
\end{proposition}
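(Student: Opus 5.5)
Fix $0\neq\theta\in k\cap\mathbb{R}i$, so $\theta=ti$ with $t\in\mathbb{R}^\times$; since $\theta^2=-t^2\in k_0$, scalar multiplication by $\theta$ is \emph{not} available in $(\cdot_\lambda)_{\lambda\in k_0}$, so we will instead carry $\theta$ along as an existentially quantified parameter, characterized existentially. Note that for $x,y\in k$ with $y\neq 0$ we have $|x+y|=|x-y|$ if and only if $x\bar y\in\mathbb{R}i$, i.e.\ $x$ is orthogonal to $y$. In particular, with $y=1$:
$$
 \Lambda(x+1,x-1)\iff x\in k\cap\mathbb{R}i .
$$
Hence $k\cap\mathbb{R}i=\{x:\Lambda(x+1,x-1)\}$ is quantifier-freely definable.

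The plan is to show that $k_0$ and multiplication on it are existentially definable. First I define $k_0$. For $x\in k$ we have $x\in k_0$ if and only if $|x+\theta|=|x-\theta|$, i.e.\ $x\perp\theta$. Thus
$$
 x\in k_0\iff\exists w\,\bigl(\Lambda(w+1,w-1)\wedge w\neq0\wedge\Lambda(x+w,x-w)\bigr),
$$
which is existential. Orthogonality to a nonzero element $w$ is the key tool throughout: $\Lambda(x+w,x-w)$ expresses $x\perp w$.

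Next I define multiplication on $k_0$. Take $a,b,c\in k_0$ and a nonzero $w\in k\cap\mathbb{R}i$; after scaling we may think of $w$ as $ti$. I want to encode $ab=c$ geometrically, by similar right triangles. Consider the element $aw+?$; the trouble is that $aw$ is not a term. Instead I quantify $y$ with $y\in\mathbb{R}w$ (i.e.\ $y\perp 1$) and encode $y=aw$ via lengths: $y\perp1$, $\Lambda(y,a\cdot?)$ fails as well, because $|aw|=|a||w|$ itself involves a product.

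So I use the Pythagorean relation, which is linear in the available data. For $p\in\mathbb{R}$ and $q\in\mathbb{R}i$ we have $|p+q|^2=p^2+|q|^2$. Write $x^{(2)}$ for the squared modulus. The identity $2ab=(a+b)^2-a^2-b^2$ reduces multiplication to squaring, and squaring to comparing lengths. Concretely, to express $c=a^2$ with $a,c\geq0$, I use $(1+c)^2-(1-c)^2=4c$: $c=a^2$ holds iff there is $y\in k\cap\mathbb{R}i$ with
$$
 |\tfrac{1-c}{2}+y|=\tfrac{1+c}{2},\qquad |y|=|a| ,
$$
since the first condition says $|y|^2=\bigl(\tfrac{1+c}{2}\bigr)^2-\bigl(\tfrac{1-c}{2}\bigr)^2=c$. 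Both conditions are $\Lambda$-atoms with $k_0$-scalars applied to the variables and constant $1$, which is allowed. The worry is whether such a $y$ exists in $k$. We need $y=\pm\sqrt{c}\,i$ up to norm, i.e.\ $|y|=a$ with $y\in k\cap\mathbb{R}i=k_0\theta$. This requires $a/|\theta|\in k_0$, which is false in general, and this is the main obstacle. I would fix it by normalizing: encode everything relative to $\tau:=|\theta|$, which is not in $k$ but satisfies $\tau^2=-\theta^2\in k_0$. Elements of $k\cap\mathbb{R}i$ are exactly $r\theta$ with $r\in k_0$, and $|r\theta|=|r|\tau$. The Pythagorean condition with $p\in k_0$ and $y=r\theta$ gives $p^2+r^2\tau^2$, which involves the constant $\tau^2\in k_0$; scalar multiplication by it is available. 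Thus $r^2=s$ becomes $|\frac{1-\tau^2 s}{2}\cdot1+r\theta|=\frac{1+\tau^2 s}{2}$. The remaining difficulty is that $\theta$ is quantified, so $\tau^2$ varies; but $\theta$ can be pinned down up to a $k_0$-multiple, and I would carry $\tau^2$ only through expressions such as $|\theta|$ compared with $|y|$. In practice I would instead fix one specific $\theta_0$ and prove definability with $\theta_0$ as the witness's normalization, using that $\lambda:=-\theta_0^2\in k_0$ gives the scalar $\cdot_\lambda$. With $w$ quantified and constrained by $\Lambda(w\pm\ldots)$ to equal $\pm\theta_0$, via $|w+\lambda'|=|\theta_0+\lambda'|$-type atoms ($|w|^2=\lambda$ is expressible as $|\lambda\cdot1+w|=|\ldots|$ by the same Pythagorean trick), we obtain $|r w|^2=\lambda r^2$. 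Squaring on $k_0$ is then existentially defined, multiplication follows by polarization, and $k_0$ itself was defined above.
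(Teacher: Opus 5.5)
Your definition of $k\cap\mathbb{R}i$ by $\Lambda(x+1,x-1)$ is correct and matches the paper's $\iota$, and your existential definition of $k_0$ via orthogonality to a nonzero imaginary $w$ matches its $\rho$. Reducing multiplication to squaring by polarization would also be a legitimate variant. But the decisive step is missing. Your squaring condition $|\frac{1-\tau^2 s}{2}+r\theta|=|\frac{1+\tau^2 s}{2}|$ contains $r\theta$, and your final version ``$|rw|^2=\lambda r^2$'' contains $rw$, with $r$ a variable. That is either a product of two variables or scalar multiplication by $\theta_0\notin k_0$, and neither is a term of the language. This is exactly the obstruction you flagged yourself (``$aw$ is not a term''). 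Pinning the quantified $w$ down to $\pm\theta_0$ does not touch it: even if $\theta_0$ were a named constant, $r\mapsto r\theta_0$ would still be unavailable. So as written, no existential formula for squaring is ever produced.

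The missing idea is to let an existentially quantified $y$ play the role of $r\theta_0$. Its modulus is then tied to $r$ by an atom that is \emph{linear} in $r$ and $y$ with coefficients in $k_0$. Your own Pythagorean identity, applied with $r$ in place of $1$, does this. For $r\in k_0$ and $y\in k\cap\mathbb{R}i$, the atom $\Lambda((\lambda-1)r+2y,(\lambda+1)r)$ holds iff $(\lambda-1)^2r^2+4|y|^2=(\lambda+1)^2r^2$, i.e.\ iff $|y|^2=\lambda r^2$, i.e.\ iff $y=\pm r\theta_0$. Then for $r,s\in k_0$,
\[
 s=r^2 \iff \exists y\,\bigl(\Lambda(y+1,y-1)\wedge\Lambda((\lambda-1)r+2y,(\lambda+1)r)\wedge\Lambda(1-\lambda s+2y,\,1+\lambda s)\bigr).
\]
The witness $y=r\theta_0$ lies in $k$, the sign ambiguity is harmless because only moduli of real-plus-imaginary sums occur, and the auxiliary $w$ becomes superfluous. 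This is precisely the device the paper uses to eliminate the scalar $\alpha$ from its one-atom encoding of $x^2=yz$, namely the relation $y=\pm x\alpha$ on $k_0\times k_0\alpha$. The coefficients must be arranged as above: with $a+1$ and $a-1$ interchanged, as printed in the paper, the atom holds only for $x=y=0$. With this step added, your polarization argument goes through. The paper instead obtains $|x|=|yz|$ from three instances of $x^2=yz$ and recovers the sign as in Proposition~\ref{prop:real_undecidable}.
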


\begin{proof}
First note that the quantifier $\mathcal{L}_{k_0,{\rm inf}}$-formula
$$
 \iota(x) :=  \Lambda(x-1,x+1) 
$$ 
defines $k \cap \mathbb{R}i$ in $k$, and by assumption there exists
$0\neq\alpha\in k\cap\mathbb{R}i$. 
Therefore, the existential $\mathcal{L}_{k_0,{\rm inf}}$-formula
$$
 \rho(x) := \exists z(z\neq 0 \wedge \iota(z) \wedge \Lambda(x-z,x+z)) 
$$
defines $k_0$ in $k$.
Let $a=-\alpha^2$
and
observe that the quantifier-free $\mathcal{L}_{k,{\rm inf}}$-formula  
$$
 \varphi(x,y,z) :=\Lambda(2x+\alpha(y-a^{-1}z),\alpha(y+a^{-1}z))
$$
holds for $x,y,z \in k_0$ if and only if
$4x^2+a(y-a^{-1}z)^2=a(y+a^{-1}z)^2$,
i.e.~$x^2=yz$.
Thus, 
$$
  \exists u,v(\varphi(y,1,u)\wedge\varphi(z,1,v)\wedge\varphi(x,u,v))
$$
holds if and only if $x^2=y^2z^2$, 
i.e.~$|x|=|yz|$,
hence the claim follows by using Proposition~\ref{prop:real_undecidable} (applied to $k_0$).

It remains to note that the scalar multiplication by $\alpha$ can be replaced with scalar multiplication by $a$,
since the relation $y= \pm x\alpha $ on $k_0\times k_0\alpha$ is definable by the quantifier-free $\mathcal{L}_{k_0,{\rm inf}}$-formula
    \[
     \Lambda\left((a+1)x+2y, (a-1)x \right ).
     \] 
Indeed, the formula above holds for $x\in k_0$ and $y\in k_0\alpha$ if and only if $(a+1)^2x^2-4y^2 = (a-1)^2x^2$,
hence if and only if $4ax^2=4y^2$.
\end{proof}

\begin{corollary}\label{cor:complex_Turing_equiv}
Let $k\subseteq\mathbb{C}$ be a number field and let $k_0=k\cap\mathbb{R}$.
If $k\cap \mathbb{R}i\neq\{0\}$, then 
${\rm Th}_\exists(k,+,(\cdot_\lambda)_{\lambda\in k_0},L_\infty)$,
${\rm Th}_\exists(k,+,1,(\cdot_\lambda)_{\lambda\in k},L_\infty)$,
and ${\rm Th}_\exists(k_0,+,\cdot)$ 
are Turing-equivalent.
\end{corollary}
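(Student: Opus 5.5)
We prove the three Turing-equivalences by a cycle of reductions
$$
{\rm Th}_\exists(k_0,+,\cdot)\;\le_T\;{\rm Th}_\exists(k,+,(\cdot_\lambda)_{\lambda\in k_0},L_\infty)\;\le_T\;{\rm Th}_\exists(k,+,1,(\cdot_\lambda)_{\lambda\in k},L_\infty)\;\le_T\;{\rm Th}_\exists(k_0,+,\cdot).
$$

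\emph{First reduction.} Proposition~\ref{prop:k0exdef} gives an existential definition of $k_0$ in $(k,+,1,(\cdot_\lambda)_{\lambda\in k_0},L_\infty)$, together with an existential definition of multiplication on $k_0$. Hence every existential sentence in $(k_0,+,\cdot)$ translates effectively into an existential sentence of $(k,+,1,(\cdot_\lambda)_{\lambda\in k_0},L_\infty)$: relativize each quantifier to $\rho$ and replace each atomic multiplication by the defining formula. Negated atomic formulas $\neg(xy=z)$ cause no trouble, since they can be rewritten as $\exists w(xy=w\wedge w\neq z)$. It remains to eliminate the constant $1$. For $0\neq a\in k$, the map $x\mapsto ax$ is an automorphism of $(k,+,(\cdot_\lambda)_{\lambda\in k_0},L_\infty)$. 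So a sentence $\psi(1)$ holds if and only if $\exists w(w\neq 0\wedge\psi(w))$ holds, which is again existential, exactly as in the proof of Corollary~\ref{cor:complex_undecidable}.

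\emph{Second reduction.} This is trivial: the language $(+,(\cdot_\lambda)_{\lambda\in k_0},L_\infty)$ is a sublanguage of $(+,1,(\cdot_\lambda)_{\lambda\in k},L_\infty)$, so existential sentences of the former are existential sentences of the latter.

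\emph{Third reduction.} Here we interpret $(k,+,1,(\cdot_\lambda)_{\lambda\in k},L_\infty)$ existentially in $(k_0,+,\cdot)$.
\begin{enumerate}[(i)]
\item Since $k\not\subseteq\mathbb{R}$ and complex conjugation restricts to $k$ (because $\alpha\in k$ and $\bar\alpha=-\alpha$), $k$ is a quadratic extension of the fixed field $k_0$. In fact $k=k_0\oplus k_0\alpha$ with $\alpha^2=-a\in k_0$, where $a>0$.
\item Represent $x\in k$ by a pair $(x_1,x_2)\in k_0^2$ with $x=x_1+x_2\alpha$. Addition is coordinatewise, and $1$ is the pair $(1,0)$.
\item Scalar multiplication by a fixed $\lambda=\lambda_1+\lambda_2\alpha$ is $k_0$-linear in the coordinates. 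It uses only the constants $\lambda_1,\lambda_2,a\in k_0$, and these are algebraic numbers which can be eliminated existentially via their minimal polynomials together with an ordering condition. The ordering on $k_0$ is existentially definable by \cite[Lemma 4.1]{EM}.
\item Finally, $|x|^2=x_1^2+ax_2^2$, so $L_\infty(x,y)$ becomes $x_1^2+ax_2^2\le y_1^2+ay_2^2$. This is existentially definable in $(k_0,+,\cdot)$, again using \cite[Lemma 4.1]{EM} (the order is one of the real orderings, so it is existentially definable with constants).
\end{enumerate}

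The main point to check in the third reduction is that negated atomic formulas stay existential. Equality is fine, since $x\neq y$ holds if and only if some coordinate differs. A negated order relation $\neg(s\le t)$ is equivalent to $t<s$, that is, to $t\le s\wedge t\neq s$, which is existential. Each constant in $k_0$ is pinned down by finitely many polynomial equations and order conditions, since $k_0$ is a number field with a fixed real embedding. Therefore every existential sentence of $(k,+,1,(\cdot_\lambda)_{\lambda\in k},L_\infty)$ translates effectively into an existential sentence of $(k_0,+,\cdot)$. This closes the cycle and proves all three theories Turing-equivalent.
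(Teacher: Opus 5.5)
\textbf{There is a genuine gap in step (i) of your third reduction.} Your cycle of reductions is the same as the paper's. The first two steps are fine, including the automorphism $x\mapsto ax$ that removes $1$ and your treatment of $\neg(xy=z)$.

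Knowing $\bar\alpha=-\alpha\in k$ does not imply that complex conjugation maps $k$ into itself, and in general $k\neq k_0\oplus k_0\alpha$. Take $k=\mathbb{Q}(i,\beta)$ with $\beta=\sqrt[3]{2}\,\zeta_3$. Then $[k:\mathbb{Q}]=6$ and $i\in k\cap\mathbb{R}i$. But $\bar\beta\notin k$: otherwise $\zeta_3=\bar\beta/\beta$ and $\sqrt[3]{2}$ would lie in $k$, giving $[k:\mathbb{Q}]\geq 12$. Hence $k_0(i)\neq k$, and since $[k_0(i):\mathbb{Q}]$ divides $6$, this forces $k_0=\mathbb{Q}$ and $[k:k_0]=6$. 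Now $|x|^2$ is a quadratic form in the six $k_0$-coordinates of $x$, and its coefficients (e.g.\ $|\beta|^2=\sqrt[3]{4}$) are not in $k_0$. So the formula in (iv) is not available, and the interpretation you describe does not exist for such $k$.

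The missing idea is to work in the conjugation-stable field $L=k\bar k$ with $L_0=L\cap\mathbb{R}$; the paper compresses this into ``since $k/k_0$ is finite''.
\begin{enumerate}[(i)]
\item $L$ is finite over $k_0$, so it can be coded on $k_0^{[L:k_0]}$, with multiplication given by structure constants in $k_0$.
\item $k$ is a $k_0$-subspace of $L$, and conjugation is $k_0$-linear.
\item For $x\in k$ we have $|x|^2=x\bar x\in L_0$.
\item The ordering of the number field $L_0$ is existentially definable in $L_0$ by \cite[Lemma 4.1]{EM}, hence existentially in $k_0$.
\end{enumerate}
When $k$ happens to be conjugation-stable, your (i)--(iv) is exactly this with $L=k$.

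\textbf{Your elimination of constants is also incorrect as stated.} You cannot pin down each constant of $k_0$ by an existential $\emptyset$-formula. For $k=\mathbb{Q}(\sqrt2,i)$ one has $k_0=\mathbb{Q}(\sqrt2)$, whose automorphism $\sqrt2\mapsto-\sqrt2$ shows that no $\emptyset$-formula separates $\pm\sqrt2$. The ordering you want to use for this is not ${\rm Aut}(k_0)$-invariant, so it too is only definable with parameters, and the argument is circular.

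What works, and is what the paper means by ``existentially $\emptyset$-definable up to automorphisms'', is the following. The translated sentence has the form $\varphi(c)$ with $\varphi$ an existential $(+,\cdot)$-formula and $c\in k_0^m$. Write $c=p(\theta)$, where $\theta$ is a primitive element of $k_0$ with minimal polynomial $f$ and $p$ has rational coefficients. Replace $\varphi(c)$ by $\exists t\,(f(t)=0\wedge\varphi(p(t)))$. Every root of $f$ in $k_0$ is $\sigma(\theta)$ for some $\sigma\in{\rm Aut}(k_0)$, and $k_0\models\varphi(\sigma(c))$ iff $k_0\models\varphi(c)$, so the two sentences are equivalent.
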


\begin{proof}
Proposition \ref{prop:k0exdef} immediately gives
a Turing reduction of  $T_0:={\rm Th}_\exists(k_0,+,\cdot)$ to
$T_1:={\rm Th}_\exists(k,+,1,(\cdot_\lambda)_{\lambda\in k_0},L_\infty)$,
and since the map $x\mapsto ax$ for $0\neq a\in k$ is an automorphism of
$(k,+,(\cdot_\lambda)_{\lambda\in k_0},L_\infty)$ that sends $1$ to $a$,
we can existentially quantify away the constant $1$.

Conversely, $(k,+,\cdot,(\lambda)_{\lambda\in k},L_\infty)$
is existentially interpretable in $(k_0,+,\cdot,(\lambda)_{\lambda\in k_0})$ since $k/k_0$ is finite,
and so since each $\lambda\in k_0$ is existentially $\emptyset$-definable in the field $k_0$ up to automorphisms of $k_0$,
$T_1$
can be reduced to
$T_0$
by existentially quantifying away the constants.
\end{proof}

\section{Weak approximation}
\label{sec:WA}

\noindent
In this final section, 
we will axiomatize and make use of the well-known weak approximation theorem, to study the theory of a field with a family of absolute values.

\begin{proposition}[Artin--Whaples, \cite{AW}]\label{prop:WA}
Let $|.|_1,\dots,|.|_r$ be pairwise inequivalent non-trivial absolute values on a field $k$,
and let $x_1,\dots,x_r\in k$.
Then for every $\epsilon>0$ there exists $y\in k$ such that $|y-x_i|_i<\epsilon$ for every $i$.
\end{proposition}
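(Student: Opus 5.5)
The plan is to follow Artin's classical argument. First reduce to constructing, for each $i$, elements of $k$ that are ``close to $1$ at $|.|_i$ and close to $0$ at all $|.|_j$ with $j\neq i$''. Then take linear combinations.

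\emph{Step 1 (separating elements).} For every $i$ I will construct $z_i\in k$ with $|z_i|_i>1$ and $|z_i|_j<1$ for all $j\neq i$. I proceed by induction on $r$, as in Artin--Whaples.

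For $r=2$, I use inequivalence. Two absolute values are equivalent if and only if $\{x:|x|_1<1\}=\{x:|x|_2<1\}$. So there are $a,b\in k$ with $|a|_1<1\le|a|_2$ and $|b|_2<1\le|b|_1$. Then $z=b/a$ works for index $1$.

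For the inductive step, pick $w$ with $|w|_1>1$ and $|w|_j<1$ for $2\le j\le r-1$, by induction. Also pick $t$ with $|t|_1>1$ and $|t|_r<1$, by the $r=2$ case. I distinguish two cases.
\begin{itemize}
\item If $|w|_r<1$, take $z=w$.
\item If $|w|_r\ge1$, take $z=w^n t/(1+w^n)$ or $z=w^nt$, depending on whether $|w|_r>1$ or $|w|_r=1$, with $n$ large.
\end{itemize}
The point is that $w^n/(1+w^n)\to1$ at places where $|w|>1$ and $\to0$ where $|w|<1$. This limit computation is where I expect the only real care to be needed.

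\emph{Step 2 (approximate idempotents).} Set $e_i:=z_i^n/(1+z_i^n)$. As $n\to\infty$ we have $|e_i-1|_i\to0$ and $|e_i|_j\to0$ for $j\neq i$, since $|z_i|_i>1$ and $|z_i|_j<1$.

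\emph{Step 3.} Put $y:=\sum_i x_ie_i$. Then
\[
|y-x_i|_i\le |x_i|_i|e_i-1|_i+\sum_{j\neq i}|x_j|_i|e_j|_i .
\]
Here I use that any absolute value satisfies the triangle inequality up to a constant factor, which suffices; for the normalized representatives it holds outright. This bound is less than $\epsilon$ for $n$ large, uniformly over the finitely many $i$.

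The main obstacle is Step 1's inductive case with $|w|_r=1$. There the factor $w^n$ has modulus $1$ at $|.|_r$ and does not help. So one must multiply by $t$ rather than divide, choosing $n$ large so that $|w^nt|_1>1$ and $|w^nt|_j<1$ for $2\le j\le r-1$, while $|w^nt|_r=|t|_r<1$.
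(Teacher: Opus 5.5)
Your argument is correct and is the classical Artin--Whaples proof: you build separating elements $z_i$, pass to $e_i=z_i^n/(1+z_i^n)$, and take $y=\sum_i x_ie_i$. The paper gives no proof of its own and simply cites \cite{AW}, whose argument this is.

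One wording fix in the base case: knowing only that equivalence means equal open unit balls gives you just one of $a$, $b$. You need the standard one-sided criterion instead: if $|.|_1$ is nontrivial and $|x|_1<1$ implies $|x|_2<1$, then $|.|_1$ and $|.|_2$ are equivalent. Applying it in both directions yields both $a$ and $b$.
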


This following immediate corollary is a straightforward generalization of \cite[Lemma 29]{BF}.

\begin{corollary}\label{cor:WA}
Let $\mathfrak{p}_1,\dots,\mathfrak{p}_r\in\mathbb{P}_k$ be pairwise distinct.
Let $A\in k^{m\times n}$, $b\in k^m$ and $x_1,\dots,x_r\in k^n$ with $Ax_i=b$ for $i=1,\dots,r$.    
Then for every $\epsilon>0$ there exists $y\in k^n$ with $Ay=b$ and $|y_j-x_{i,j}|_{\mathfrak{p}_i}<\epsilon$ for every $i=1,\dots,r$ and $j=1,\dots,n$.
\end{corollary}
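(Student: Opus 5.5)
The plan is to parametrize the solution space of $Ax=b$ by an affine map defined over $k$. This reduces the corollary to approximating finitely many elements of $k$ in the various absolute values, which is exactly Proposition~\ref{prop:WA}. We may assume $r\geq1$, so the system is consistent, having the solution $x_1$. By linear algebra over $k$ we choose $y_0:=x_1$ and a basis $y_1,\dots,y_d\in k^n$ of the kernel of $A$. Then the affine map
$$
 f\colon k^d\rightarrow k^n,\quad t\mapsto y_0+\sum_{l=1}^d t_ly_l
$$
is a bijection onto the solution set $\{y\in k^n:Ay=b\}$. For each $i$ we pick $t^{(i)}\in k^d$ with $f(t^{(i)})=x_i$. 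These exist because $x_i-x_1\in\ker A$, and their coordinates lie in $k$.

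Next we use that $f$ is Lipschitz in each absolute value. For $t,t'\in k^d$,
$$
 |f(t)_j-f(t')_j|_{\mathfrak{p}_i}\leq\sum_{l=1}^d|t_l-t'_l|_{\mathfrak{p}_i}\,|y_{l,j}|_{\mathfrak{p}_i}\leq C\max_l|t_l-t'_l|_{\mathfrak{p}_i},
$$
where $C:=d\cdot\max_{i,l,j}|y_{l,j}|_{\mathfrak{p}_i}+1$. This uses only the triangle inequality in its weak form, so it covers archimedean and ultrametric places uniformly. Since $|.|_{\mathfrak{p}_i}$ is a fixed representative of its class, $C$ is a finite constant.

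Now set $\epsilon':=\epsilon/C$. For each coordinate $l=1,\dots,d$ separately, we apply Proposition~\ref{prop:WA} to the pairwise inequivalent absolute values $|.|_{\mathfrak{p}_1},\dots,|.|_{\mathfrak{p}_r}$ and the targets $t^{(1)}_l,\dots,t^{(r)}_l$. This yields $s_l\in k$ with $|s_l-t^{(i)}_l|_{\mathfrak{p}_i}<\epsilon'$ for all $i$. We then put $y:=f(s)$. This gives $Ay=b$, and the Lipschitz estimate yields $|y_j-x_{i,j}|_{\mathfrak{p}_i}<\epsilon$ for all $i,j$.

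There is no real obstacle here. The only points needing care are that the parametrization must have coefficients in $k$, which holds since $A$ and $b$ have entries in $k$, and the trivial cases. If $d=0$, then all $x_i$ equal $x_1$ and $y=x_1$ works. If $r=0$, the statement is vacuous once a solution exists, or we simply assume $r\geq1$ as above.
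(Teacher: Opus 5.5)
Your proposal is correct and takes essentially the same route as the paper. Both parametrize the solution set affinely over $k$, apply Proposition~\ref{prop:WA} coordinatewise in $k^d$, and transfer back. Your explicit Lipschitz bound simply makes precise the paper's remark that this parametrization is a homeomorphism for the topology of each $\mathfrak{p}_i$.
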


\begin{proof}
Let $Y=\{y\in k^n:Ay=b\}$ and fix an affine $k$-linear isomorphism $f\colon Y\rightarrow k^d$ (for some $d\leq n$).
Note that $f$ is a homeomorphism with respect to the topology induced by $\mathfrak{p}_i$, for each $i$, and therefore we can assume without loss of generality that $f={\rm id}$, so $A=0$, $b=0$, $n=d$.
The claim then follows by applying Proposition \ref{prop:WA} to each coordinate $j=1,\dots,n$ separately.
\end{proof}

\begin{definition}
Let $S_1\subseteq S_0\subseteq\mathbb{P}_k$ be sets of places of $k$.  
We let 
$$
 \mathcal{L}_{k,S_0,S_1}=\mathcal{L}_k\cup\bigcup_{\mathfrak{p}\in S_0}\mathcal{L}_{k,\mathfrak{p}}\cup \bigcup_{\mathfrak{p}\in S_1}\mathcal{L}_{k,\mathfrak{p}}^+
$$
where $\mathcal{L}_{k,\mathfrak{p}}^{[+]}$ is the language $\mathcal{L}_{k,{\rm fin}}^{[+]}$ (if $\mathfrak{p}$ is finite) respectively $\mathcal{L}_{k,{\rm inf}}^{[+]}$ (if $\mathfrak{p}$ is infinite) with all symbols that are not in $\mathcal{L}_k$ labeled by $\mathfrak{p}$ (e.g. $L^\mathfrak{p}$ instead of $L$, so that $\mathcal{L}_{k,\mathfrak{p}}^{[+]}\cap\mathcal{L}_{k,\mathfrak{q}}^{[+]}=\mathcal{L}_k$ for $\mathfrak{q}\neq\mathfrak{p}$).
We write $(\underline{k},S_0,S_1)$ for $k$ viewed as an $\mathcal{L}_{k,S_0,S_1}$-structure in the obvious way.
\end{definition}

\begin{definition}\label{def:TS0S1}
Let $S_1\subseteq S_0\subseteq\mathbb{P}_k'$,
where $S_0$ contains no complex places and
$S_1$ contains no infinite places.
We let
$$
 T_{k,S_0,S_1} = T_k\cup\bigcup_{\mathfrak{p}\in S_0}T_{k,\mathfrak{p}}\cup \bigcup_{\mathfrak{p}\in S_1}T_{k,\mathfrak{p}}^+,
$$
where we view $T_{k,\mathfrak{p}}^{[+]}$ as an $\mathcal{L}_{k,\mathfrak{p}}^{[+]}$-theory, together with the axiom scheme ({\em weak approximation}) for a structure $K$,
which contains for every finite subset $S=\{\mathfrak{p}_1,\dots,\mathfrak{p}_r\}\subseteq S_0$ and every matrix $A\in k^{m\times n}$ the following axiom: For all $b\in K^m$, $c_1,\dots,c_r\in K\setminus\{0\}$, and $x_1,\dots,x_r\in K^n$ such that $Ax_i=b$ for $i=1,\dots,r$, there exists $y\in K^n$ with $Ay=b$ and $L^{\mathfrak{p}_i}(y_j-x_{i,j},c_i)$ for every $i=1,\dots,r$ and every $j=1,\dots,n$.
\end{definition}

\begin{proposition}\label{prop:k_S}
Let $S_1\subseteq S_0\subseteq\mathbb{P}_k'$ be as in Definition \ref{def:TS0S1}. 
Then $(\underline{k},S_0,S_1)\models T_{k,S_0,S_1}$ and $(\underline{k},S_0,S_1)$ embeds uniquely into every model of $T_{k,S_0,S_1}$.     
\end{proposition}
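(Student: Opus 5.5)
The claim splits into two parts: $(\underline{k},S_0,S_1)$ is a model of $T_{k,S_0,S_1}$, and it embeds uniquely into every model. We check the axiom groups of $T_{k,S_0,S_1}$ one at a time.

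\emph{The local theories.} $T_k$ holds trivially. Fix $\mathfrak{p}\in S_0$. The $\mathcal{L}_{k,\mathfrak{p}}^{[+]}$-reduct of $(\underline{k},S_0,S_1)$ is exactly $(\underline{k},\mathfrak{p})$ in the appropriate language. If $\mathfrak{p}$ is finite, Theorem~\ref{thm:finite} gives $(\underline{k},\mathfrak{p})\models T_{k,\mathfrak{p}}^{[+]}$; here we use $\mathfrak{p}\in\mathbb{P}_k'$, so $\mathfrak{p}$ is discrete, and we take the $+$-version when $\mathfrak{p}\in S_1$. If $\mathfrak{p}$ is infinite, then $\mathfrak{p}$ is real by hypothesis and not in $S_1$, so Theorem~\ref{thm:real} applies. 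Satisfaction of a sentence depends only on the reduct to its language, so all of $\bigcup_{\mathfrak{p}\in S_0}T_{k,\mathfrak{p}}\cup\bigcup_{\mathfrak{p}\in S_1}T_{k,\mathfrak{p}}^+$ holds.

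\emph{Weak approximation.} Fix an instance of the axiom scheme: $S=\{\mathfrak{p}_1,\dots,\mathfrak{p}_r\}$, $A$, $b$, $c_i\neq0$, and $x_i$ with $Ax_i=b$. Put $\epsilon:=\min_i|c_i|_{\mathfrak{p}_i}>0$ and apply Corollary~\ref{cor:WA}. This yields $y\in k^n$ with $Ay=b$ and $|y_j-x_{i,j}|_{\mathfrak{p}_i}<\epsilon\le|c_i|_{\mathfrak{p}_i}$, which is exactly $L^{\mathfrak{p}_i}(y_j-x_{i,j},c_i)$.

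\emph{Unique embedding.} Let $K\models T_{k,S_0,S_1}$. Its $\mathcal{L}_k$-reduct is a model of $T_k$ with $0\neq1$; axiom \eqref{item:vectorspace} is part of $T_{k,\mathfrak{p}}$ via $\Psi$ in the finite case and follows from $0<1$ in the real case. If $S_0=\emptyset$, we must argue $0^K\neq 1^K$ separately: this is where I would check whether the definition implicitly forces it, or whether the statement should be read as requiring it. By Proposition~\ref{prop:Tk} there is a unique $\mathcal{L}_k$-homomorphism $f\colon\underline{k}\to K$, $\lambda\mapsto\lambda\cdot1^K$, and it is injective. Uniqueness of an $\mathcal{L}_{k,S_0,S_1}$-embedding then follows, since any such embedding is in particular an $\mathcal{L}_k$-homomorphism.

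For existence, we must check that $f$ respects $L^\mathfrak{p}$, $M^\mathfrak{p}$ and $Q_n^\mathfrak{p}$, together with their negations, for each $\mathfrak{p}$. This is the one step needing care, but it reduces to the single-place results. The reduct $K|_{\mathcal{L}_{k,\mathfrak{p}}^{[+]}}$ is a model of $T_{k,\mathfrak{p}}^{[+]}$. Theorem~\ref{thm:finite}, respectively Theorem~\ref{thm:real}, gives a unique embedding of $(\underline{k},\mathfrak{p})$ into this reduct. That embedding is in particular an $\mathcal{L}_k$-homomorphism, so it coincides with $f$. Hence $f$ preserves every symbol labelled by $\mathfrak{p}$, for every $\mathfrak{p}\in S_0$, and $f$ is an $\mathcal{L}_{k,S_0,S_1}$-embedding.
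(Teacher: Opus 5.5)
Your proof is correct and is essentially the paper's proof: the local axioms come from Theorems~\ref{thm:finite} and~\ref{thm:real}, the weak approximation scheme from Corollary~\ref{cor:WA} (your choice $\epsilon=\min_i|c_i|_{\mathfrak{p}_i}$ only spells out the paper's one-line citation), and the embedding from Proposition~\ref{prop:Tk} together with uniqueness of the single-place embeddings. Your worry about $S_0=\emptyset$ is justified and no proof can remove it: the scheme has to be read for nonempty $S$ (for $S=\emptyset$ and $A=0$ it would force $K=\{0\}$), so $T_{k,\emptyset,\emptyset}=T_k$ has the zero space as a model, and the statement tacitly needs $S_0\neq\emptyset$ or an extra axiom $0\neq1$, a point the paper's proof also passes over.
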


\begin{proof}
The fact that $(\underline{k},S_0,S_1)$ satisfies each of the $T_{k,\mathfrak{p}}^+$ is part of Theorems \ref{thm:finite} and \ref{thm:real},
and it satisfies
the axiom scheme weak approximation by Corollary \ref{cor:WA}.
If $K\models T_{k,S_0,S_1}$,
then there is a unique embedding of $k$ into $K$ as an $\mathcal{L}_k$-structure (Proposition \ref{prop:Tk}), 
and this embedding is also an
$\mathfrak{L}_{k,\mathfrak{p}}^{[+]}$-embedding for every $\mathfrak{p}\in S_0$ (also see Theorems \ref{thm:finite} and \ref{thm:real}), and therefore a $\mathcal{L}_{k,S_0,S_1}$-embedding.
\end{proof}

\begin{theorem}\label{thm:WA}
Let $S_1\subseteq S_0\subseteq\mathbb{P}_k'$.
\begin{enumerate}[(a)]
\item If $S_0$ contains no complex place and $S_1$ contains no infinite place, then $T_{k,S_0,S_1}$ is model complete and complete.
\item If $S_0$ contains no complex place, and $S_1=\emptyset$, then $T_{k,S_0,S_1}$ has quantifier elimination.
\end{enumerate}
\end{theorem}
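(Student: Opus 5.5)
Completeness in both parts follows from model completeness once we have a prime model: by Proposition \ref{prop:k_S}, $(\underline{k},S_0,S_1)$ embeds into every model, and model completeness makes these embeddings elementary. So all models are elementarily equivalent to $(\underline{k},S_0,S_1)$. For (a) it suffices to prove model completeness, and for (b) quantifier elimination. I would prove both with the same embedding test.

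For (b), fix models $K,K'$ with $K'$ sufficiently saturated, a common substructure $A$, and $\alpha\in K\setminus A$. I would extend the identity on $A$ to an embedding of $A+k\alpha$ into $K'$. Since $A$ contains $\underline{k}$ and is a $k$-subspace, $A+k\alpha=A\oplus k\alpha$. The quantifier-free type of $\alpha$ over $A$ is a union of two kinds of information. The first is the $\mathcal{L}_{k,\mathfrak{p}}$-quantifier-free type of $\alpha$ over $A$, for each $\mathfrak{p}\in S_0$. The second is the linear data, i.e.\ which $b+\lambda\alpha$ vanish, and this is trivial here.

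By Theorems \ref{thm:finite} and \ref{thm:real}, each $T_{k,\mathfrak{p}}$ has quantifier elimination. Hence $q_\mathfrak{p}:=\mathrm{qftp}_{\mathfrak{p}}(\alpha/A)$ is realized in $K'|_{\mathcal{L}_{k,\mathfrak{p}}}$, which is a saturated model of the complete theory $T_{k,\mathfrak{p}}(A)$ with the diagram of $A$ added. By saturation and compactness, it suffices to show that finitely many formulas, coming from finitely many places $\mathfrak{p}_1,\dots,\mathfrak{p}_r$, are simultaneously realizable in $K'$ by one element outside $A$.

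This is the main obstacle, and I would handle it with the weak approximation axiom. Pick realizations $\beta_i\in K'$ of the finite parts of $q_{\mathfrak{p}_i}$. Each such finite part is a quantifier-free condition built from $L^{\mathfrak{p}_i}$-comparisons between terms $b+\lambda x$ and elements of $A$.

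The key sublemma is that these conditions are \emph{open}. Concretely, there is $c_i\neq0$ such that every $y$ with $L^{\mathfrak{p}_i}(y-\beta_i,c_i)$ still satisfies them, possibly after shrinking $c_i$ to avoid boundary cases.

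\begin{itemize}
\item For finite places, this is the ultrametric fact that $v(y-\beta)>v(\beta-b)$ implies $v(y-b)=v(\beta-b)$. Strict comparisons are preserved for small perturbations. Equality comparisons $\Lambda$ are also preserved, because the value group is discrete.
\item For real places, conditions of the form $x\le y$ are not open. I would first use quantifier elimination in the order language (Theorem \ref{thm:QE_order}) to rewrite the type as equalities plus strict inequalities. Since $\alpha\notin A$, the equalities $x=b$ do not occur, and the strict inequalities are open in the order topology.
\end{itemize}

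Then weak approximation, applied with $n=1$ and $A=0$ to the elements $\beta_1,\dots,\beta_r$ and radii $c_1,\dots,c_r$, gives a single $y$ satisfying all the conditions at once. To ensure also $y\notin A$ finitely often, I would impose a strict condition at some place, using infinitely many nearby choices as in Case~1 of Theorem \ref{thm:QE}. This yields the embedding, and hence quantifier elimination.

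For (a), the $M$-predicates and $Q_n$ break quantifier elimination (Remark \ref{rmk:finiteplaceL+noEQ}), but each $T_{k,\mathfrak{p}}^+$ is model complete by Theorem \ref{thm:finite}. I would verify Robinson's test. Take $K\subseteq K'$ models of $T_{k,S_0,S_1}$ and an existential formula over $K$ true in $K'$. By model completeness of the $T_{k,\mathfrak{p}}^+$, $K\preceq_{\mathfrak{p}} K'$ for each $\mathfrak{p}$. I would write the existential formula in the combined language as a linear system $Ay=b$ together with finitely many local conditions, each involving a single place.

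\begin{itemize}
\item At finite places, the conditions involving $M$ and $Q_n$ depend only on valuations.
\item Valuations are locally constant in the $\mathfrak{p}$-topology, so these local conditions are again open around a witness.
\end{itemize}

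The formula may be existential in several local reducts at once. I would handle this by taking, for each place, a witness in $K$ for the local projection, obtained via elementarity. Then I would glue these witnesses by the weak approximation axiom in $K$, using the full matrix version with $Ay=b$ to keep the linear constraints. Verifying this openness for existential (rather than quantifier-free) local formulas is the delicate point. If needed, I would first move the auxiliary witnesses into the tuple $y$ and use openness of the resulting quantifier-free local conditions.
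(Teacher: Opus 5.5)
Your proof of (b) is correct but takes a different route from the paper. You use the one-element embedding test over a common substructure $A$. Because $A$ is a $k$-subspace and $\alpha\notin A$, every term $b+\lambda\alpha$ with $\lambda\neq0$ is nonzero. At a real place, $|t_1(\alpha)|=|t_2(\alpha)|$ forces $t_1=\pm t_2$ identically. So once each $\beta_i$ also realizes the finitely many formulas $x\neq a$ (these lie in $q_{\mathfrak{p}_i}$), every local condition is automatically open at $\beta_i$. Weak approximation is then needed only with $n=1$ and no linear constraints.

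The paper instead takes an $n$-tuple witness of a quantifier-free $\psi_0\wedge\psi_1\wedge\dots\wedge\psi_r$. It gets witnesses of each $\psi_0\wedge\psi_i$ from local quantifier elimination and glues them with the matrix form of the weak approximation scheme. This costs a normalization step, but the same argument then also proves (a). There $T_{k,\mathfrak{p}}^+$ has no quantifier elimination (Remark~\ref{rmk:finiteplaceL+noEQ}), so your one-variable test is unavailable.

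Your (a) follows the paper's route, and it has a genuine gap at the step you call delicate. The problem is not existential versus quantifier-free: moving all witnesses into the tuple, as you suggest, is what the paper does anyway. The problem is that the resulting local conditions are \emph{not} open at degenerate points. Valuations are not locally constant at $0$. At a real place, $L(z_{j_1},z_{j_2})$ is not open where $|x_{j_1}|=|x_{j_2}|$, and you do not treat real places in (a) at all.

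A witness in $K$ of a local projection, obtained from $K\preceq_{\mathfrak{p}_i}K'$, can be degenerate in a way not recorded by $Ay=b$, and then gluing fails. For example, $\exists z\,(L^{\mathfrak{p}}(z,0)\wedge L^{\mathfrak{q}}(z,1))$ is witnessed by $z=0$. The $\mathfrak{q}$-projection has the legitimate witness $1$, and the linear part is empty. Every $y$ that is $\mathfrak{p}$-close to $0$ and $\mathfrak{q}$-close to $1$ is nonzero, so $L^{\mathfrak{p}}(y,0)$ fails.

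The missing idea is the paper's normalization using the witness $a\in (K')^n$. Substitute $0$ for $z_j$ when $a_j=0$, and $\pm z_{j_1}$ for $z_{j_2}$ when $a_{j_2}=\pm a_{j_1}$. Then add $z_j\neq0$ and $z_{j_1}\neq\pm z_{j_2}$ to the linear part $\psi_0$. Now every local witness $x_i\in K^n$ of $\psi_0\wedge\psi_i$ has nonzero, pairwise non-$\pm$-equal coordinates, and at such points the conditions really are open:
\begin{itemize}
\item At a finite place, $L$, $M$, $Q_l$ applied to $0,\pm z_j$ depend only on the values $v(z_j)$. These stay fixed, and the nonequalities are preserved, once $v(c_i)>v(x_{i,j})$ and $v(c_i)>v(x_{i,j_1}\pm x_{i,j_2})$.
\item At a real place, all comparisons are strict, so $0<c_i<\frac12|x_{i,j}|$ and $c_i<\frac12|x_{i,j_1}\pm x_{i,j_2}|$ suffice.
\end{itemize}
With this step added, your gluing argument for (a) goes through.
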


\begin{proof}
We first prove (a).
By Proposition \ref{prop:k_S}, completeness follows from model completeness, so this is what we aim to prove.
Let $K_0\subseteq K_1$ be models of $T_{k,S_0,S_1}$ and let $\psi(z_1,\dots,z_n)$ be a quantifier-free $\mathcal{L}_{k,S_0,S_1}(K_0)$-formula that is satisfied in $K_1$, say $K_1\models\psi(a)$, $a\in K_1^n$. 
We want to show that $\psi$ is satisfied also in $K_0$.

By case distinctions and introducing new variables,
we can assume without loss of generality that 
$\psi$ is a conjunction of atomic and negated atomic formulas, and we can write 
$$
 \psi=\psi_0\wedge \psi_1\wedge\dots\wedge\psi_r
$$
where 
\begin{itemize}
\item  $\psi_0$ is a conjunction of atomic and negated atomic $\mathcal{L}_k(K_0)$-formulas, and
\item $\psi_i$ for $i=1,\dots,r$ is a conjunction of atomic and negated atomic formulas in the language
$\mathcal{L}_i:=\mathcal{L}_{k,\mathfrak{p}_i}^{[+]}\setminus\mathcal{L}_k$ not involving equality,
\end{itemize} 
with $\mathfrak{p}_1,\dots,\mathfrak{p}_r\in S_0$ distinct.
By allowing $\psi_i$ to be formulas in the bigger language
$\mathcal{L}_i\cup\{0,\cdot_{-1}\}$,
we can assume without loss of generality
that $a_j\neq0$ for each $j$
(if $a_j=0$, replace each occurrence of $z_j$ in $\psi$ by the symbol $0$) 
and $a_{j_1}\neq \pm a_{j_2}$ for each $j_1\neq j_2$
(if $a_{j_1}=a_{j_2}$, replace each occurrence of $z_{j_2}$ by $z_{j_1}$, and if
$a_{j_1}=-a_{j_2}$ replace each occurrence of $z_{j_2}$ by $-1\cdot z_{j_1}$).
Then, without loss of generality
there exist $A\in k^{m\times n}$, $b\in K_0^n$ such that for any $K\models T_k$ and $x\in K^m$, $\psi_0(x)$ holds if and only if $x\in (K\setminus \{0\})^n$, $x_{j_1}\neq \pm x_{j_2}$ for each $j_1\neq j_2$, and $Ax=b$.

Now for each $i$ we have that
the $\mathcal{L}_{k,\mathfrak{p}_i}^{[+]}(K_0)$-formula $\psi_0\wedge\psi_i$ is satisfied in $K_1$.
As $K_0,K_1\models T_{k,\mathfrak{p}_i}^{[+]}$,
and $T_{k,\mathfrak{p}_i}^{[+]}$ is model complete (Theorems \ref{thm:finite}, \ref{thm:real}), we get that
$\psi_0\wedge\psi_i$ is satisfied in $K_0$.
That is, there exists $x_i\in (K_0\setminus\{0\})^n$ with $x_{i,j_1}\neq \pm x_{i,j_2}$ for each $j_1\neq j_2$, $Ax=b$ and $K_0\models\psi_i(x_i)$.
We now apply the axiom scheme {\em (weak approximation)} from Definition \ref{def:TS0S1} to obtain $y\in K_0^n$ with $Ay=b$
and $L_{\mathfrak{p}_i}(y_j-x_{i,j},c_i)$ for every $i,j$, with $c_i\in K_0\setminus\{0\}$ that we now only need to choose in order to ensure that $K_0\models\psi_i(y)$ for each $i$, $y_j\neq 0$ for each $j$, and $y_{j_1}\neq \pm y_{j_2}$ for each $j_1\neq j_2$.

If $\mathfrak{p}_i$ is finite, then  $K_0\models T_{k,\mathfrak{p}_i}^{[+]}$
implies that
$K_0$ interprets some $(K_0,\Gamma,v)=K_0^{(2)}\models T_{k,{\rm val}}^{[+]}$
(Lemma~\ref{lem:interpretation}). 
As $x_{i,j}\neq 0$ for each $j$, and $x_{i,j_1}\neq\pm x_{i,j_2}$ for each $j_1\neq j_2$, there exists $c_i\in K_0\setminus\{0\}$ with $v(c_i)>v(x_{i,j})$ for each $j$ and with $v(c_i)>v(x_{i,j_1}\pm x_{i,j_2})$ for each $j_1\neq j_2$, and we claim that this $c_i$ does the job.
Indeed, $L_{\mathfrak{p}_i}(y_j-x_{i,j},c_i)$ means that $v(y_j-x_{i,j})\geq v(c_i)$, hence $v(y_j-x_{i,j})>v({x_{i,j}})$ and therefore $v(y_j)=v(x_{i,j})$.
So since the truth of $K_0\models L^{\mathfrak{p}_i}(\pm z_{j_1},\pm z_{j_2})$,
$K_0\models M^{\mathfrak{p}_i}(\pm z_{j_1},\pm z_{j_2},\pm z_{j_3})$ and
$K_0\models Q_{l}^{\mathfrak{p}_i}(\pm z_j)$ 
depends only on $v(z_{j})$ for each $j$ (and similarly if one of the $z_j$ is replaced by the constant symbol $0$), 
we deduce from $K_0\models \psi_i(x_i)$ that also $K_0\models\psi_i(y)$.
Note that $v(y_j)=v(x_{i,j})$ implies that $y_j\neq 0$;
and $v(y_{j_1}\pm y_{j_2})=v((y_{j_1}-x_{i,j_1})\pm (y_{j_2}-x_{i,j_2})+(x_{i,j_1}\pm x_{i,j_2}))$, which since $v(y_{j_1}-x_{i,j_1}),v(y_{j_2}-x_{i,j_2})\geq v(c_i)>v(x_{i,j_1}\pm x_{i,j_2})$ implies that $y_{j_1}\neq \pm y_{j_2}$ for every $j_1\neq j_2$.

If $\mathfrak{p}_i$ is real, then similarly 
$K_0$ interprets some $(K_0,\leq)\models T_{k,{\rm ord}}$.
As $x_{i,j}\neq 0$ for each $j$ and $x_{i,j_1}\neq\pm x_{i,j_2}$ for each $j_1\neq j_2$, there exists 
$c_i\in K_0$ with $c_i>0$,
$c_i<\frac{1}{2}|x_{i,j}|$ for each $j$, 
and $c_i<\frac{1}{2}|x_{i,j_1}-x_{i,j_2}|$ for each $j_1\neq j_2$, and we claim that this does the job.
Indeed, note that
$\psi_i$ is equivalent to a conjunction of formulas of the form $L^{\mathfrak{p}_i}(t,s)$, where $t,s\in\{0,z_1,\dots,z_n,-z_1,\dots,-z_n\}$, and negations of such formulas.
Suppose that $K_0\models L^{\mathfrak{p}_i}(\pm x_{i,j_1},\pm x_{i,j_2})$,
which is equivalent to
$|x_{i,j_2}|\geq|x_{i,j_1}|$.
Let us assume that $x_{i,j_2}\geq x_{i,j_1}\geq 0$, the other cases are similar. 
The case $j_1=j_2$ is trivial, so assume that $j_1\neq j_2$.
Then $0<2c_i<x_{i,j_2}-x_{i,j_1}$.
For each $j$, $K_0\models L^{\mathfrak{p}_i}(y_j-x_{i,j},c_i)$ implies that $|y_j-x_{i,j}|\leq c_i$, so
$$
 y_{j_2}-y_{j_1}\geq (x_{i,j_2}-c_i)-(x_{i,j_1}+c_i)=(x_{i,j_2}-x_{i,j_1})-2c_i>0,
$$ 
in particular $K_0\models L^{\mathfrak{p}_i}(y_{j_1},y_{j_2})$.
And if $K_0\models\neg L^{\mathfrak{p}_i}(\pm x_{i,j_1},\pm x_{i,j_2})$, then $K_0\models L^{\mathfrak{p}_i}(\pm x_{i,j_2},\pm x_{i,j_1})$, and the previous argument shows that
$K_0\models L^{\mathfrak{p}_i}(y_{j_2},y_{j_1})\wedge y_{j_2}\neq y_{j_1}$
and thus
$K_0\models\neg L^{\mathfrak{p}_i}(y_{j_1},y_{j_2})$.
Therefore, we deduce from $K_0\models \psi_i(x_i)$ that
$K_0\models\psi_i(y)$.
We have also already shown that $y_{j_1}\neq y_{j_2}$ for $j_1\neq j_2$,
and $y_j\neq 0$ follows similarly from $|x_{i,j}|>2c_i$ and $|y_j-x_{i,j}|\leq c_i$.

The proof of (b) now works very similar, except that we start with two models $K_1,K_2$ of $T_{k,S_0,S_1}$ with a common sub{\em structure} $K_0$, and we want to conclude from $K_1\models\exists{z}\psi$ that
$K_2\models\exists{z}\psi$.
We proceed as before but then use that $T_{k,\mathfrak{p}_i}$ has quantifier elimination to conclude from $K_1\models\exists z(\psi_0\wedge\psi_i)$ that $K_2\models\exists z(\psi_0\wedge\psi_i)$. 
The same weak approximation argument as before then shows that $K_2\models\exists z\psi$.
\end{proof}

\begin{corollary}\label{Cor:global_decidable}
If $k$ is a global field
and $S_1\subseteq S_0\subseteq\mathbb{P}_k$ are sets of places of $k$ where $S_0$ contains no complex places and $S_1$ contains no infinite places, then $T_{k,S_0,S_1}$ is decidable.
\end{corollary}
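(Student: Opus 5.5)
The plan is the standard one: a complete theory with a recursively enumerable axiomatization is decidable. Theorem \ref{thm:WA}(a) says $T_{k,S_0,S_1}$ is complete; recall $\mathbb{P}_k'=\mathbb{P}_k$ for a global field. So it remains to show that $T_{k,S_0,S_1}$ has a computable axiomatization, or at least a recursively enumerable one, relative to a suitable presentation of the language.

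Given a sentence $\varphi$, we will enumerate all proofs from the axioms until we find a proof of $\varphi$ or of $\neg\varphi$. Completeness guarantees that this search terminates.

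To make this meaningful we first fix a computable presentation. A global field $k$ is computable with decidable equality, so the symbols $\cdot_\lambda$, $\lambda\in k$, can be coded effectively. The places of $k$ can also be coded: finite places by prime ideals of the ring of integers (or of $\mathbb{F}_q[t]$ together with the place at infinity), and infinite places by the finitely many real and complex embeddings. For each finite $\mathfrak{p}$ we can then compute the valuation $v_\mathfrak{p}(\lambda)$, a uniformizer $\pi$, and the residue field $\kappa$, which is finite with computable representatives $\Omega$.

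For infinite $S_0$ the statement should be read with $S_0$ and $S_1$ decidable sets of codes of places, so that the language is computable. Otherwise "decidable" is taken relative to $S_0,S_1$, and the argument relativizes.

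Next we check that each axiom group is recursively enumerable. The group $T_k$ is clearly so. For finite $\mathfrak{p}$, the theory $T_{k,v_\mathfrak{p}}^{[+]}$ is given by schemes (1)--(11). Scheme (6) needs $u(\lambda)$, which is computable; scheme (7) needs $\Omega$, which is a finite computable set. The translation $\varphi\mapsto\varphi_\Psi$ of Lemma \ref{lem:interpretation} is computable uniformly in $\pi$, and the admissibility conditions $T_\Psi^{[+]}$ are finitely many sentences. Hence $T_{k,\mathfrak{p}}^{[+]}$ is recursively enumerable, uniformly in $\mathfrak{p}$.

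For real $\mathfrak{p}$, the theory $T_{k,\leq_\mathfrak{p}}$ requires deciding $\lambda\geq_\mathfrak{p}0$ for $\lambda\in k$. This is decidable because the sign of an algebraic number under a given real embedding can be computed, for instance by isolating intervals. The weak approximation scheme ranges over finite subsets of $S_0$ and matrices over $k$, so it is enumerable as well.

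Having an r.e. complete theory, we obtain decidability. The main point of care is the effectivity of the presentation: computing valuations, residue representatives and real signs in a global field. This is standard algorithmic number theory; I would cite it rather than prove it.
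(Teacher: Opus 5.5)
Your proposal is correct and follows the paper's argument: Theorem \ref{thm:WA}(a) gives completeness, and the computability of $k$ and of the relations $L_\mathfrak{p},M_\mathfrak{p},Q_{n,\mathfrak{p}}$ makes $T_{k,S_0,S_1}$ computably axiomatized, so its consequences are c.e.\ and hence decidable. Your extra care about coding places and about infinite $S_0$ is appropriate, and the paper leaves it implicit; the one slip is that $T_\Psi^{[+]}$ is not finite (there is an admissibility condition for each $\cdot_\lambda$ and each $P_n$), but it is evidently computable, so nothing changes.
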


\begin{proof}
If $k$ is a global field, then $(k,+,\cdot)$ is computable (see, e.g.~\cite[\S19.2]{FJ}), and also each of the relations 
$L_{\mathfrak{p}},M_{\mathfrak{p}},Q_\mathfrak{p}$ is computable.
Therefore, both the theory $T_k$ and each of the theories $T_{k,\mathfrak{p}}^{[+]}$ is computable.
Hence also $T_{k,S_0,S_1}$ is computable, and therefore the set of consequences of 
$T_{k,S_0,S_1}$ is computably enumerable.
As $T_{k,S_0,S_1}$ is complete (Theorem \ref{thm:WA}), this implies its decidability.
\end{proof}

Note that we have not defined $T_{k,S_0,S_1}$ when $S_0$ contains a complex place or  $S_1$ contains a real place.  
Finally, we put everything together to prove the theorems from the introduction:

\begin{proof}[Proof of Theorem \ref{thm:intro1}]
Let $k$ be a global field and $\mathfrak{p}\in\mathbb{P}_k=\mathbb{P}_k'$.

(a): Suppose that $\mathfrak{p}$ is finite.
By Theorem \ref{thm:finite},
$$
 (k,+,0,1,(\cdot_\lambda)_{\lambda\in k},L_\mathfrak{p},M_\mathfrak{p},(Q_{n,\mathfrak{p}})_{n\in\mathbb{N}})=(\underline{k},\mathfrak{p})\models T_{k,\mathfrak{p}}^+.
$$ 
Similarly, 
$$
 (\hat{\underline{k}}_\mathfrak{p},\hat{\mathfrak{p}}) := (\hat{k}_\mathfrak{p},+,0,1,(\cdot_\lambda)_{\lambda\in k},L_{\hat{\mathfrak{p}}},M_{\hat{\mathfrak{p}}},(Q_{n,{\hat{\mathfrak{p}}}})_{n\in\mathbb{N}})\models T_{k,\mathfrak{p}}^+.
$$ 
Indeed, $(\hat{\underline{k}}_\mathfrak{p},\hat{\mathfrak{p}})^{(2)}
=(\underline{\hat{k}}_{\mathfrak{p}},\mathbb{Z}\cup\{\infty\},v_{\hat{\mathfrak{p}}})$, 
where $v_{\hat{\mathfrak{p}}}$ is the natural extension of $v_\mathfrak{\mathfrak{p}}$ to the completion,
and one easily checks that $(\underline{\hat{k}}_{\mathfrak{p}},\mathbb{Z}\cup\{\infty\},v_{\hat{\mathfrak{p}}})\models T_{k,v_\mathfrak{p}}^+$.
Since $T_{k,\mathfrak{p}}^+$ is complete by Theorem \ref{thm:finite},
and decidable by Corollary \ref{Cor:global_decidable} (for $S_0=\{\mathfrak{p}\}$, $S_1=\emptyset$),
we obtain that also the $\mathcal{L}_{k}\cup\{L,M\}$-reducts of
$(\underline{k},\mathfrak{p})$ and
$(\hat{\underline{k}}_\mathfrak{p},\hat{\mathfrak{p}})$
have the same theory,
and that this theory is decidable.

(b): Suppose that $\mathfrak{p}$ is real.
By Theorem \ref{thm:real}, $(\underline{k},L_\mathfrak{p})\models T_{k,\mathfrak{p}}$,
and similarly
$(\hat{\underline{k}}_\mathfrak{p},L_{\hat{\mathfrak{p}}})\models T_{k,\mathfrak{p}}$.
So as $T_{k,\mathfrak{p}}$ is complete (Theorem \ref{thm:real}), we get that
${\rm Th}(\underline{k},L_\mathfrak{p})={\rm Th}(\hat{\underline{k}}_\mathfrak{p},L_{\hat{\mathfrak{p}}})$,
and this theory is decidable 
 by Corollary \ref{Cor:global_decidable} (again for $S_0=\{\mathfrak{p}\}$, $S_1=\emptyset$).
By Corollary \ref{cor:real_undecidable},
${\rm Th}(\underline{k},L_\mathfrak{p},M_\mathfrak{p})$ is undecidable,
so since $(\mathbb{R},+,0,1,(\cdot_\lambda)_{\lambda\in k},L_\infty,M_\infty)$ is interpretable in the decidable field
$(\mathbb{R},+,\cdot)$ (note that every $\lambda\in k$ is definable in $\mathbb{R}$), we get in particular that
$$
 {\rm Th}(\underline{k},L_\mathfrak{p},M_\mathfrak{p}) \neq {\rm Th}(\underline{\hat{k}}_\mathfrak{p},L_{\hat{\mathfrak{p}}},M_{\hat{\mathfrak{p}}}).
$$
The statement about the existential theory is
contained in Corollary \ref{cor:real_undecidable}.

(c): Suppose that $\mathfrak{p}$ is complex.
By Corollary \ref{cor:complex_undecidable},
${\rm Th}(k,+,L_\mathfrak{p})$ is undecidable. 
Since $(\mathbb{C},+,L_\infty)$ is interpretable in 
the decidable field $(\mathbb{R},+,\cdot)$, we get that
in particular
${\rm Th}(k,+,L_\mathfrak{p}) \neq {\rm Th}(\hat{k}_\mathfrak{p},+,L_{\hat{\mathfrak{p}}})$.
If $k\cap \mathbb{R}i=\{0\}$,
then
${\rm Th}_\exists(k,+,1,(\cdot_\lambda)_{\lambda\in k},L_{\mathfrak{p}})$
is decidable by Proposition \ref{prop:complex_existential_decidable}.
If $k\cap \mathbb{R}i\neq\{0\}$,
then ${\rm Th}_\exists(k,+,1,(\cdot_\lambda)_{\lambda\in k},L_{\mathfrak{p}})$ is Turing-equivalent to
${\rm Th}_\exists(k\cap\mathbb{R},+,\cdot)$ by Corollary \ref{cor:complex_Turing_equiv}.
\end{proof}

\begin{proof}[Proof of Theorem \ref{thm:intro2}]
Let $k$ be a global field and $S_1\subseteq S_0\subseteq\mathbb{P}_k=\mathbb{P}_k'$.
If $S_0$ contains a complex place or
$S_1$ contains a real place, 
the undecidability follows from Theorem \ref{thm:intro1}.
If neither of this happens, 
we have that $(\underline{k},S_0,S_1)\models T_{k,S_0,S_1}$ by Proposition \ref{prop:k_S},
and
$T_{k,S_0,S_1}$ is complete by Theorem \ref{thm:WA}
and decidable by
Corollary \ref{Cor:global_decidable},
and thus ${\rm Th}(\underline{k},S_0,S_1)$ is decidable.
\end{proof}

\appendix
\section{}
\label{sec:appendix}
\noindent
We saw in \Cref{prop:complex} that for any subfield $k$ of $\mathbb{C}$ which is not contained in $\mathbb{\mathbb{R}}$, the field $(k\cap \mathbb{R},+,\cdot)$ is definable in $(k,+,0,1,L_\infty)$. 
    We now extend this to show that the full field structure $(k,+,\cdot)$ is definable. 
    
    \begin{proposition}\label{prop:complex+}
        For a subfield $k$ of $\mathbb{C}$ with $k \not\subseteq \mathbb{R}$, the field $(k,+,\cdot)$ is definable in $(k,+, 0, 1,L_\infty)$.
    \end{proposition}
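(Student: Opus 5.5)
We already have from Proposition \ref{prop:complex} that the subfield $k_0=k\cap\mathbb{R}$ with its field operations is definable in $(k,+,0,1,L_\infty)$. So the plan is to get multiplication of arbitrary elements of $k$ by writing them in a suitable basis over $k_0$ and reducing to multiplication in $k_0$. The obstacle is that $k$ need not be finite over $k_0$, and even when it is, we have no scalar multiplication by non-real elements in the language.

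\emph{Step 1: polar coordinates.} For $x\in k^\times$ we use the formula $\varphi(x,y)$ from the proof of Proposition \ref{prop:complex}, which defines $y\in\mathbb{R}_{\geq0}x$. Multiplication $z=xy$ in $\mathbb{C}$ is characterized by two conditions: $|z|=|x||y|$, and $\arg z=\arg x+\arg y$.

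\emph{Step 2: the modulus condition.} First we define the relation $|z|=|x||y|$ on $k$. Every $x$ need not have a real element of the same modulus in $k$, so we cannot simply take $|x|\in k_0$. Instead we reuse the similar-triangles idea from Proposition \ref{prop:complex}, now with non-real rays. Given $x,y,z$, we require the following for every $\epsilon>0$ in $k_0$: there exist $x',z'$ with $\Lambda(x',x)$ and $\Lambda(z',z)$ such that the triangles $(0,1,x')$ and $(0,y,z')$ are similar up to $\epsilon$. Here similarity means that $z'$ lies on the ray parallel to $x'-1$ through $y$, and $z'$ lies on the ray $\mathbb{R}_{\geq0}(\ldots)$, all expressed with $\varphi$. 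Density of $k$ in $\mathbb{C}$ and completeness then give $|z|=|x||y|$ exactly, as in that proof.

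\emph{Step 3: the argument condition.} Next we express $\arg z=\arg x+\arg y$ geometrically. The triangles $(0,1,x)$ and $(0,y,xy)$ are directly similar, since multiplication by $y$ is a rotation-dilation. So $z=xy$ holds if and only if $z-y$ is parallel to $x-1$ with the correct orientation and scale, and $|z|=|x||y|$. Parallelism with the same direction is the relation $z-y\in\mathbb{R}_{\geq0}(x-1)$, defined by $\varphi(x-1,z-y)$. Concretely, I would take the formula
\[\varphi(x-1,z-y)\wedge \mathrm{Mod}(x,y,z),\]
where $\mathrm{Mod}$ is the definable relation from Step 2. Together with the case $x=1$, this should define the graph of multiplication.

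\emph{Step 4: checking sufficiency.} The conditions above leave possibly two candidate points $z$: the intersection of the ray from $y$ in direction $x-1$ with the circle of radius $|x||y|$. The main obstacle is to rule out the wrong one. For this I would add the scale condition $|z-y|=|y||x-1|$, definable by another instance of $\mathrm{Mod}$. With this condition the point $z$ is unique, namely $z=y+y(x-1)=xy$, and the argument is complete. Degenerate cases ($y=0$, $x=1$, or collinear configurations) are handled by disjunction.
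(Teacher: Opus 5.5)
Your Step 3 rests on a false geometric claim, and Step 4 inherits it. The direct similarity between the triangles $(0,1,x)$ and $(0,y,xy)$ is $t\mapsto yt$, which rotates by $\arg y$. It sends the side $x-1$ to $xy-y=y(x-1)$, not to a positive multiple of $x-1$. For $x\neq 1$, $y(x-1)\in\mathbb{R}_{\geq0}(x-1)$ holds only if $y\in\mathbb{R}_{\geq0}$. So for every $y\in k\setminus\mathbb{R}_{\geq0}$ the true product $z=xy$ fails $\varphi(x-1,z-y)$; e.g.\ if $i\in k$ and $x=y=i$, then $z-y=-1-i\notin\mathbb{R}_{\geq0}(i-1)$.

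Conversely, for $x\neq 1$, $\varphi(x-1,z-y)$ together with your scale condition $|z-y|=|y||x-1|$ forces $z=y+|y|(x-1)$. This equals $xy$ only when $y$ is a nonnegative real. So your formulas capture multiplication by nonnegative real scalars, which is essentially the paper's auxiliary formula $\psi'$. The rotational part of complex multiplication, $\arg z=\arg x+\arg y$, is never expressed.

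Step 2 has the same parallelism error. The relation $|z|=|x||y|$ is nevertheless definable, by approximating the moduli with elements of the dense set $k_0\cap\mathbb{R}_{\geq0}$ and using multiplication in $k_0$. Purely metric conditions do not rescue the approach either: $|z|=|x||y|$ and $|z-y|=|y||x-1|$ only give $z\in\{xy,\bar{x}y\}$. Whenever $\bar{x}y\in k$ (e.g.\ for $k=\mathbb{Q}(i)$), separating these two points needs a definable notion of orientation, and that is the real difficulty.

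The paper avoids angles and orientation altogether by an approximation argument. Via $\chi$, it defines for small $\delta\in k_0$ a set of $w\in k\setminus k_0$ close to $\pm i$, namely those with $|\mathrm{Re}(w)|\leq\delta$ and $||w|-1|\leq\delta$. It approximates $z,z'$ by $a+bw$ and $a'+b'w$ with $a,b,a',b'\in k_0$, which is possible since $k_0+k_0w$ is dense in $\mathbb{C}$. It then characterizes $zz'=z''$ by $z''$ being the limit of $aa'-bb'+(ab'+a'b)w$. This expression is $(a+bw)(a'+b'w)$ with $w^2$ replaced by $-1$. It uses only the field operations of $k_0$ and multiplication of $w$ by real scalars. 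It converges to $zz'$ whether $w$ approaches $i$ or $-i$, since both satisfy $w^2=-1$. That is how the ambiguity between $xy$ and $\bar{x}y$ is sidestepped without ever fixing an orientation.
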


\begin{proof}
As in the proof of \Cref{prop:complex}, let $\varphi(x,y)$ be the formula that defines the relation $y\in \mathbb{R}_{\geq 0}x$,
and $\psi(r,r',r'')$ the formula that defines the ternary relation $rr'=r''$ on $k_0:=k\cap\mathbb{R}$.
In particular, the formula  $\varphi(1,x)$ defines $k^+\coloneqq k \cap \mathbb{R}_{\geq 0}$.

We first define the relation $rz'=z''$ on $k^+ \times k\times k$, i.e.~the multiplication by a positive scalar~$r$. 
We start with a formula $\theta(a,z,\epsilon)$ stating, for elements $a,z,\epsilon \in k$, that $a,\epsilon \in k^+$, $\epsilon\leq a$ and $ a -\epsilon \leq \vert z \vert \leq a +\epsilon$.
We may take: 
\begin{eqnarray*}
 \theta(a,z,\epsilon) &\coloneqq&\varphi(1,a)\wedge \varphi(1,\epsilon) \wedge L(\epsilon,a) \wedge L(a-\epsilon, z, a+\epsilon)   
\end{eqnarray*}
Now, for $r \in k^+$ and $z,w\in k$, we have $rz = w$ if and only if the following formula $\psi'(r,z,w)$ holds:
\[
 \varphi(w,z) \wedge \forall \epsilon,   \nu,  r',  r''  (( \theta(r',z,\epsilon) \wedge \varphi(1,r') \wedge \varphi(1,r'') \wedge \psi(r,r',r'')  \wedge \psi(r,\epsilon,\nu)) \rightarrow  \theta(r'',w,\nu) ). 
\]
Indeed, if for all $\epsilon$ and $r'\in k^+$ with  $||z|-r' | < \epsilon$ we have
$||w|- rr' |<r\epsilon$, then $|rz| = |w|$. If moreover $w\in \mathbb{R}_{\geq0} z$, then $w=rz$, as wanted.

Then, for fixed $\delta \in k^+$ the formula
\begin{eqnarray*}
 \chi(\delta,w) &:=& L(w-\delta+1, w-\delta-1) \wedge 
 L(w+\delta-1,w+\delta+1)\wedge\theta(1,w,\delta)   
\end{eqnarray*}
defines the predicate $|{\rm Re}(w)|\leq \delta \wedge ||w|-1|\leq \delta$, 
which describes a set $U_\delta\cup\overline{U_\delta}$, where $U_\delta$ is a neighborhood of $i$
of diameter $\rho\rightarrow 0$ as $\delta\rightarrow 0$.
\begin{center}
    \begin{tikzpicture}    \pgfmathsetmacro{\d}{0.3}
        \filldraw[black!10] (0,0) -- (180:2+\d) arc (180:0:2+\d);
        \filldraw[white] (0,0) -- (180:2-\d) arc (180:0:2-\d); 
        \fill[black!10] (-\d,-0.5) rectangle (+\d,3);

\begin{scope}[even odd rule]
  \clip
    (0,0) circle (2+\d)
    (0,0) circle (2-\d);
    \fill[black!30] (-\d,0) rectangle (+\d,3);
\end{scope}

        \draw[dotted] (0,0) -- (180:2+\d) arc (180:0:2+\d);
        \draw[dotted] (0,0) -- (180:2-\d) arc (180:0:2-\d);
        \draw[dotted] (\d,-0.5) -- (\d,3);
        \draw[dotted] (-\d,-0.5) -- (-\d,3);

        \draw[<->] (0,1)--(\d,1);
        \draw[<->] (2,-0.1)--(2+\d,-0.1);
        \draw (\d/2,1) node[anchor=north] {$\delta$};
        \draw (2+\d/2,-0.1) node[anchor=north] {$\delta$};

        \draw [dashed](180:2) arc (180:0:2);
        \draw[->] (-3,0)--(3,0);
        \draw[->] (0,-0.5)--(0,3);
        \draw (0,0) node[anchor=north east] {$0$};
        \filldraw (0,0) circle (1pt);
        \draw (0,2) node[anchor=north east] {$i$};
        \filldraw (0,2) circle (1pt);
        \draw (\d,2) node[anchor=south west] {$U_\delta$};
        
    \end{tikzpicture}
\end{center}

\vspace{-2cm}

With $D:=\mathbb{R}^4\times(\mathbb{C}\setminus\mathbb{R})$,
the map
$$
 f\colon\begin{cases}\quad\quad D&\longrightarrow\quad\mathbb{C}^2\times(\mathbb{C}\setminus\mathbb{R}) \\ (a,b,a',b',w)&\longmapsto\quad (a+bw,a'+b'w,w)
 \end{cases}
$$
is a homeomorphism, and
$$
g\colon\begin{cases}
\quad\quad D&\longrightarrow\quad \mathbb{C}\\(a,b,a',b',w)&\longmapsto\quad aa'-bb'+(ab'+a'b)w
\end{cases}
$$
is continuous.
Thus if 
$d_n\in D$ is a sequence such that
$f(d_n)$ converges to $(z,z',i)$,
then $g(d_n)$ converges to 
$$
 g(f^{-1}(z,z',i))=g({\rm Re}(z),{\rm Im}(z),{\rm Re}(z'),{\rm Im}(z'),i) = zz',
$$
and if $f(d_n)$ converges to $(z,z',-i)$, then
$g(d_n)$ converges to
$$
 g(f^{-1}(z,z',-i))=g({\rm Re}(z),-{\rm Im}(z),{\rm Re}(z'),-{\rm Im}(z'),-i) = zz'
$$
as well.
Therefore, since $k_0$ is dense in $\mathbb{R}$, and $k_0+k_0w$ is dense in $\mathbb{C}$ for any $w\in k\setminus k_0$,
\begin{eqnarray*}
&&\forall 0<\epsilon\in k_0\, \exists 0<\delta\in k_0\, \forall a,b,a',b'\in k_0\, \forall w\in k\setminus k_0\\
&&\quad((w\in U_\delta \wedge |(a+bw)-z|<\delta \wedge |(a'+b'w)-z'|<\delta )\rightarrow |g(a,b,a',b',w)-z''|<\epsilon)
\end{eqnarray*}
holds for $z,z',z''\in k$ if and only if $zz'=z''$,
and using the formulas $\psi'$, $\chi$ and $\varphi$ from above, this can be expressed by a formula in the language $\{+,0,1,L\}$.
\end{proof}

\section*{Acknowledgements}
\noindent
The authors would like to thank Guang Hu for 
help with the arguments in Propositions
\ref{prop:complex_existential_decidable}, \ref{prop:k0exdef} and \ref{prop:complex+},
and Philip Dittmann for helpful remarks on a previous version.
A.~F.\ would like to thank Manuel Bodirsky for earlier helpful discussions regarding the linear theory of the $p$-adic numbers and \cite{Weispfenning},
and Konstantinos Kartas for sharing an unpublished note in which he proves a strengthening of Theorem \ref{cor:QE}.

Most of this research was done while the authors were participating in the trimester on ``Definability, decidability, and computability'' at the Hausdorff Institute Bonn, funded by the Deutsche Forschungsgemeinschaft (DFG, German Research Foundation) under Germany‘s Excellence Strategy – EXC-2047/1 – 390685813.

\end{document}